\newtheorem{lemma}{Lemma}[section]
\newtheorem{theorem}[lemma]{Theorem}
\newtheorem{proposition}[lemma]{Proposition}
\newtheorem{cor}[lemma]{Corollary}
\newtheorem{claim*}{Claim}
\newtheorem{defn}[lemma]{Definition}
\theoremstyle{definition}
\newtheorem{remark}[lemma]{Remark}
\newtheorem{example}[lemma]{Example}
\newtheorem{assumption}[lemma]{Assumption}
\newtheorem{problem}[lemma]{Problem}
\newcommand{\G}{{\mathbb G}}
\newcommand{\PP}{{\mathbb P}}
\newcommand{\C}{{\mathbb C}}
\newcommand{\F}{{\mathbb F}}
\newcommand{\Q}{{\mathbb Q}}
\newcommand{\R}{{\mathbb R}}
\newcommand{\Z}{{\mathbb Z}}
\newcommand{\calO}{{\mathcal O}}
\newcommand{\cO}{{\mathcal O}}
\newcommand{\frp}{{\mathfrak p}}
\newcommand{\oJ}{\overline{J}}
\newcommand{\oC}{\overline{C}}
\DeclareMathOperator{\Tr}{\bf Tr}
\DeclareMathOperator{\End}{End}
\DeclareMathOperator{\Hom}{Hom}
\DeclareMathOperator{\Aut}{Aut}
\DeclareMathOperator{\Gal}{Gal}
\DeclareMathOperator{\Pic}{Pic}
\DeclareMathOperator{\Jac}{Jac}
\DeclareMathOperator{\Spec}{Spec}
\DeclareMathOperator{\Nrd}{Nrd}
\DeclareMathOperator{\N}{\bf Nm}
\numberwithin{equation}{section}
\numberwithin{table}{section}
\title{Bad Reduction of Genus Three Curves with Complex Multiplication}
\author{Irene Bouw, Jenny Cooley, Kristin Lauter, Elisa Lorenzo Garcia, Michelle Manes, Rachel Newton, Ekin Ozman}
\thanks{The work of MM was partially supported by NSF-DMS 1102858.}
\let\@wraptoccontribs\wraptoccontribs
\subjclass[2010]{11G15, 14K22, 15B33}
\keywords{}
\begin{document}


	\begin{abstract}
	Let $C$ be a smooth, absolutely irreducible genus $3$ curve
        over a number field $M$. Suppose that the Jacobian of $C$ has
        complex multiplication by a sextic CM-field $K$. Suppose
        further that $K$ contains no imaginary quadratic subfield. We
        give a bound on the  primes $\mathfrak{p}$  of $M$ such that the stable
        reduction of $C$ at  $\mathfrak{p}$ contains three irreducible
        components of genus $1$.
	
	\end{abstract}
	\maketitle

\section{Introduction} \label{sec:intro}

In \cite{GorenLauter07}, Goren and Lauter study genus $2$ curves whose
Jacobians
are absolutely simple and
have complex multiplication (CM) by the ring of integers $\mathcal O_K$
 of a quartic CM-field $K$,
and they show that if such a curve has bad reduction to
characteristic $p$ then there is a solution to the embedding problem, formulated as
follows \cite{GorenLauter07}:

Let $K$ be a quartic CM-field which does
 not contain a proper CM-subfield, and let $p$ be a prime.  The embedding problem concerns finding
 a ring embedding $\iota: \mathcal O_K \hookrightarrow \End(E_1 \times
 E_2)$, such that the Rosati involution coming from the product
 polarization induces complex conjugation on $\mathcal O_K$,
and $E_1, E_2$ are supersingular elliptic curves over $\overline{\mathbb F}_p$.

In this paper, we consider genus $3$ curves whose Jacobians have CM
by  a sextic CM-field that does not contain a proper
CM-subfield. By analogy with
\cite{GorenLauter07}, we formulate an
 embedding problem for the genus $3$ case as follows.

\bigskip\noindent {\bf Problem \ref{problem:embedding}}\, (The
embedding problem) Let ${\mathcal O}$ be an order in a sextic CM-field
$K$, and let $p$ be a prime number.  The {\em embedding problem} for
$\cO$ and $p$ is the problem of finding elliptic curves $E_1, E_2, E_3$ defined over
$\overline{\mathbb F}_p$, and a ring embedding
\[
i: {\mathcal O}\hookrightarrow \End(E_1\times E_2\times E_3)
\]
such that the Rosati involution on $\End(E_1\times E_2\times E_3)$ induces
 complex conjugation on ${\mathcal O}$. We call such a ring
embedding a {\em solution to the embedding problem} for $\cO$
and $p$.

\bigskip
In this paper, we prove the following result on solutions to the
embedding problem.  We refer to Section \ref{62} for the precise
statement.

\bigskip\noindent {\bf Theorem \ref{thm:no embedding}}\, {\em Let $K$
  be a sextic CM-field such that $K$ does not contain a proper
  CM-subfield. Let ${\mathcal O}$ be an order in $K$.  There exists an
  explicit bound on the rational primes $p$
  for which the embedding problem has a solution, and
	this bound depends only on the order ${\cO}$.}

\bigskip
As in the genus $2$ case, Theorem \ref{thm:no embedding} yields a
bound on certain primes of bad reduction for the curve $C$. However,
the result is not as strong as in the genus $2$ case, since there are
more possibilities for the reduction of $C$. We  discuss the
statement of the result.

Let $C$ be a smooth, absolutely irreducible genus $3$ curve over a
number field $M$ whose Jacobian has CM by an order $\cO$ in a sextic
CM-field $K$.  We say that $C$ has bad reduction at a rational prime
$p$ if there exists a prime $\mathfrak{p}$ of $M$ above $p$ at which
$C$ has bad reduction.
In Corollary \ref{cor:SerreTate}, we observe that if
$C$ has bad reduction at a prime $\mathfrak{p}$, there are two
possibilities for the stable reduction $\oC_{\mathfrak{p}}$ of $C$ at
$\mathfrak{p}$. Either $\oC_{\mathfrak{p}}$ contains three irreducible
components of genus $1$, or $\oC_{\mathfrak{p}}$ contains one irreducible
component of genus $1$ and one of genus $2$.

In this paper, we restrict our attention to the first of these two
possibilities.  In Proposition \ref{prop:assumptions}, we show that if
$C$ has bad reduction at a prime $\mathfrak{p}$ above $p$ and the
stable reduction contains three genus~1 curves, then the embedding
problem for $\cO$ and $p$ has a solution. Theorem \ref{thm:no
  embedding} therefore yields the following result on the primes of
bad reduction of $C$.

\bigskip\noindent {\bf Theorem \ref{thm:bnd_on_p}}\, {\em Let $C$ be a
  genus $3$ curve whose Jacobian has CM by an order $\cO$ in a sextic
  CM-field $K$ that does not contain a proper CM-subfield. There
  exists an explicit bound on the  primes $p$ where the stable reduction
   contains three irreducible
  components of genus~$1$.}

\bigskip
We do not consider all primes of bad reduction of $C$ in
Theorem \ref{thm:bnd_on_p} for the following reason. If the stable reduction
   of $C$ at $\mathfrak{p}$ contains three irreducible
  components of genus $1$,  then the reduction $\oJ_{\mathfrak{p}}$ of the
Jacobian $J$ of $C$ is isomorphic to the product $E_1\times E_2\times
E_3$ of elliptic curves as polarized abelian varieties (Proposition
\ref{prop:SerreTate}). This yields a ring embedding
\[
\iota:
\cO=\End(J)\hookrightarrow \End(\oJ_{\mathfrak{p}})=\End(E_1\times
E_2\times E_3),
\]
which has the property that the Rosati involution on $\End(E_1\times
E_2\times E_3)$ restricts to complex conjugation on the image of $\cO$
(Section \ref{sec:polarization}). This is precisely the statement that
$\iota$ is a solution to the embedding problem for $\cO$ and $p$.

Consider a prime $\mathfrak{p}$ where the curve $C$ has bad reduction,
but the stable reduction $\oC_\mathfrak{p}$ contains an irreducible
component $E$ of genus $1$ and an irreducible component $D$ of genus
$2$ (Corollary \ref{cor:SerreTate}). In this case --- an example of which is
described in Section \ref{sec:Picardexa} --- the reduction
$\oJ_\mathfrak{p}$ of the Jacobian of $C$ is the product of $E$ with
the Jacobian of $D$ as polarized abelian varieties. The abelian
variety $\oJ_\mathfrak{p}$ is still isogenous to a product of elliptic
curves (Theorem \ref{thm: Lang2}), but $\oJ_\mathfrak{p}$ is not
isomorphic to a product of elliptic curves as polarized abelian
varieties.  This suggests that a different formulation of the
embedding problem would be needed to draw conclusions for such primes
$\mathfrak{p}$.  We do not discuss the correct formulation of the
  embedding problem for this case in the present paper, but leave it
  as a direction for future work.

The assumption that the CM-field $K$ does not contain a proper
CM-field is also present in the genus $2$ case
in \cite{GorenLauter07}. However, in the genus $2$ case, this assumption is
equivalent to the assumption that the CM-type of the Jacobian $J$
is primitive.  We refer to Section \ref{sec:g=2} for more details.  In
characteristic zero, the condition that the CM-type corresponding to
$J$ is primitive is equivalent to the assumption that $J$ is
absolutely simple (Theorem \ref{thm: primsimple}).

In the genus $3$ case, the assumption that the CM-field $K$ does not
contain a proper CM-subfield still implies that the CM-type of the
Jacobian $J$ is primitive. However, the converse does not hold. Even
in the case that the sextic CM-field $K$ contains a proper CM-subfield
there exist primitive CM-types (Section \ref{sec:prim}). In Section
\ref{subsec:degenerate}, we discuss why the embedding problem needs to
be formulated differently for such CM-fields. We show that, in
the case where $K$ contains a proper CM-subfield, the embedding problem
as we have formulated it has solutions for any prime $p$ and some
order ${\mathcal O}$ of $K$. 

Finally, we have not included the
condition that the elliptic curves $E_i$ are supersingular in the
formulation of the embedding problem, in contrast to the formulation
in genus $2$,  because for a set of Dirichlet density $1/2$, the
elliptic curves $E_i$ are ordinary.

\subsection{Relation to a result of Gross and Zagier}

One of the motivations of Goren and Lauter for studying solutions of the
embedding problem in genus $2$ was generalizing a result of
Gross and Zagier on singular moduli of elliptic curves
\cite{GrossZagier}.  Recall that  singular moduli are values
$j(\tau)$ of the modular function $j$ at imaginary quadratic numbers
$\tau$.
Gross and Zagier define the product
\[
J(d_1, d_2)=\left(\prod_{[\tau_1], [\tau_2]}\left(j(\tau_1)-j(\tau_2)\right)\right)^{4/w_1w_2},
\]
where the product runs over equivalence classes of imaginary quadratic
numbers $\tau_i$ with discriminants $d_i$, where the $d_i$ are
assumed to be relatively prime. Here $w_i$ denotes the number of units
in $\Q(\tau_i)$. The function $J$ is closely related to the
value of the Hilbert class polynomial of an imaginary quadratic field
at a point $\tau$ corresponding to a different imaginary quadratic field.

 Under some assumptions, Gross and Zagier show that
$J(d_1, d_2)$ is an integer, and their main result gives a formula for the factorization of
this integer. The result of Gross and Zagier may be reinterpreted as a
formula for the number of isomorphisms between the reductions of the
elliptic curves $E_i$ corresponding to the $\tau_i$ at all rational
primes $p$. This problem is equivalent to counting embeddings of
$\End(E_2)$ into the endomorphism ring of the reduction of $E_1$ at $p$.

Goren and Lauter (\cite{GorenLauter07}, Corollary 5.1.3) prove a
generalization of the result of Gross and Zagier.  They consider curves
of genus $2$ with CM by a quartic CM-field. In their result, the
function $J$ is replaced by suitable Siegel modular functions
$f/\Theta^k$.  Here $f$ is a Siegel modular form of weight $10k$ with
values in a number field and $\Theta$ is a concrete Siegel modular
form of weight $10$. The modular function $f/\Theta^k$ has the
property that for any $\tau$ in the Siegel upper half plane the
genus $2$ curve corresponding to $\tau$ has bad reduction at the
primes dividing the denominator of $(f/\Theta^k)(\tau)$. (See
\cite{GorenLauter07}, Corollary 5.1.2 for the precise statement.)

The Igusa class polynomials are an analog of the Hilbert class
polynomials for quartic CM-fields, where the $j$-invariant is replaced
by the absolute Igusa invariants. Goren and Lauter and collaborators (see
for example \cite{GorenLauter07}, \cite{GorenLauter13},
\cite{LauterViray}) deduce results on the denominators of the
coefficients of the Igusa class polynomials from results on the
embedding problem for quartic CM-fields.

The embedding problem for curves of genus $3$ studied in this
paper does not immediately yield a statement analogous to that of
Gross and Zagier. One of the ingredients that is missing  is
finding good coordinates for the moduli space of curves of genus $3$,
analogous to the absolute Igusa invariants in genus $2$. 

In this paper, we discuss several differences between the reduction of
CM-curves in genus $2$ and in genus $3$. The embedding problem in the
formulation of Problem \ref{problem:embedding} does not cover all
types of bad reduction. Also, in the case that the sextic CM-field $K$
contains a proper CM-subfield the embedding problem should be
adapted. It would be interesting to study the implication of these
differences for a possible analog of the Igusa class polynomials for
sextic CM-fields.

\subsection{Outline}
The structure of this paper is as follows. Section \ref{sec:Galois}
gives the possibilities for the Galois group of the Galois closure of
a sextic CM-field, following work of Dodson in \cite{Dodson}.  Section
\ref{sec:prim} describes the possible CM-types for a sextic
CM-field. We note which of the CM-types are primitive, meaning that
they can arise as the CM-type of a simple abelian variety. In Section
\ref{sec:jacobian3}, we describe the possibilities for the reduction
of a genus $3$ curve and its Jacobian to characteristic $p>0$. We also
give some properties of the Rosati involution attached to a polarized
abelian variety, which will be used in Section \ref{sec:embedding}. In
Section \ref{sec:exa}, we give various examples of genus $3$ curves
with CM; we calculate their CM-types and the reductions of the curves
and their Jacobians to characteristic $p>0$. In Section
\ref{sec:embedding}, we consider a genus $3$ curve $C$ over a number
field $M$ such that its Jacobian has CM by a sextic CM-field $K$ with
no proper CM-subfield. We prove a bound on primes such that there
exists a solution to the embedding problem, and we use that to give a
bound on the primes $p$  such that the stable reduction of
$C$ at $p$ contains three elliptic curves. We show that if
we drop the assumption that $K$ has no proper CM-subfield, then the
embedding problem as stated cannot be used to give a bound on the
primes $p$ as above.

We include as an appendix a collection of conditions that a solution
to the embedding problem must satisfy, written as equations in the
entries of certain matrices in the image of the embedding. These
equations may be useful for future work. A refinement of the embedding
problem (for example, a version which includes conditions pertaining
to the CM-type) would result in extra equations in addition to those
in the appendix. It is to be hoped that studying this larger set of
equations would yield an explicit bound on the primes for which they
have a solution. This would give a bound on the primes $p$ such that
the stable reduction of $C$ at $p$ contains three curves of genus $1$,
even in the case where the CM-field $K$ contains a proper CM-subfield.

\subsection{Notation and conventions}
We set the following notation, to be used throughout.
\begin{itemize}
\item $\mathbb F_p$ is the finite field with $p$ elements.
\item $\zeta_N$ is a primitive $N$th root of unity.
\item For a field $k$, $\overline k$ is an algebraic closure.
\item $K$ is a sextic CM-field, i.e., $K$ is a totally imaginary extension of $K^+$, where $K^+$ is a totally real cubic extension of $\Q$.
\item
 $\mathcal O$ is an order of $K$.
\item $F$ and $L$ are Galois closures of $K/\Q$ and $K^+/\Q$ respectively, with  $G=\Gal(F/\Q)$ and $G^+=\Gal(L/\Q)$.
\item $\psi$ is a complex embedding $K \hookrightarrow
\C$, and $\rho$ is complex conjugation.  Hence $\{\psi, \rho\circ \psi\}$ is a conjugate pair of embeddings.
\item $(K,\varphi)$ is a CM-type, i.e., a choice of one embedding from each pair of complex conjugate embeddings.
\item $A$ is an abelian variety, $\End(A)$ is the endomorphism ring of $A$, and $\End^0(A)$ is $\End(A) \otimes \mathbb Q$.
\item For $f \in \End(A)$, $f^\vee\in \End(A^\vee)$ is the dual
  isogeny. The Rosati involution associated with a fixed polarization
  is denoted by $f\mapsto f^\ast, \, \End^0(A)\to \End^0(A)$.
\item $E$ is an elliptic curve, $j(E)$ is the $j$-invariant of $E$.
\item We denote an isomorphism between two abelian varieties over an
  algebraic closure of the field of definition by $\simeq$.
\item We denote an isogeny between two abelian varieties over an
  algebraic closure of the field of definition by $\sim$.
\item $M$ is a number field, $\nu$ (or $\mathfrak{p}$) is a finite place of $M$,  ${\mathcal O}_\nu$ is the
valuation ring of $\nu$, and $k_\nu$ is the residue field.
\item $C$ is a  curve over a
  number field with Jacobian $J$ and genus $g=g(C)$. A curve $C$ is
  always assumed to be smooth, projective and absolutely irreducible,
  unless explicitly mentioned otherwise.
\item $B_{p,\infty}$ is the quaternion algebra ramified at $p$ and $\infty$, 
and $R$ is a maximal order of $B_{p,\infty}$.
\item For a matrix $T$, $\Tr(T)$ denotes the sum of its diagonal entries, the trace.
\item $\Tr_{K/K_1}$ denotes the trace of a field extension $K/K_1$.
\item For an element of a central simple algebra, $\Nrd$ denotes the reduced norm.
\item $\N_{K/K_1}$ denotes the norm of a field extension $K/K_1$; we use $\N$ when the extension is clear.
\end{itemize}

\subsection*{Acknowledgments}
The authors would like to thank  the Centre International de Rencontres
Math\'ematiques in Luminy for sponsoring the Women in Numbers - Europe (Femmes
en nombre) workshop and for providing a productive and enjoyable
environment for our initial work on this project.  We would especially like to thank the organizers of WINE,
Marie Jos\'e Bertin, Alina Bucur, Brooke Feigon, and Leila Schneps for making the conference and this collaboration possible.
We also thank the referee for the detailed and helpful report.

\section{The Galois group of the Galois closure of a sextic CM-field}
\label{sec:Galois}

Let $K$ be a sextic CM-field, i.e., $K$ is a totally imaginary quadratic
extension of a totally real field $K^+$ with $[K^+:\Q]=3$.
 We denote the Galois closure of $K^+/\Q$ by $L$ and the Galois
 closure of $K/\Q$ by $F$. We write $G=\Gal(F/\Q)$ and
 $G^+=\Gal(L/\Q)$.  The following proposition lists the possibilities
 for $G$.

\begin{proposition}\label{prop:classification}
Let $K$ be a sextic CM-field, and let $G$ be the Galois group of the
Galois closure of $K/\Q$. Then $G$ is one of the following groups:
\begin{enumerate}
\item $C_2\times C_3\simeq C_6$,
\item $C_2\times S_3\simeq D_{12}$,
\item $(C_2)^3\rtimes G^+$ with $G^+\in \{C_3, S_3\}$ acting by permutations on the three copies of $C_2$.
\end{enumerate}
In particular, if $K/\Q$ is Galois, then the Galois group
$G=\Gal(K/\Q)\simeq C_6$ is cyclic.
\end{proposition}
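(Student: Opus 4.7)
The plan is to analyze $F$ as a multiquadratic extension of $L$. Since $K^+/\Q$ is cubic, $G^+ = \Gal(L/\Q)$ is a transitive subgroup of $S_3$, so $G^+ \in \{C_3, S_3\}$; and $L$, being the Galois closure of a totally real field, is itself totally real. Writing $K = K^+(\sqrt{\alpha})$ with $\alpha \in K^+$ totally negative, the Galois closure over $\Q$ is $F = L(\sqrt{\alpha_1}, \sqrt{\alpha_2}, \sqrt{\alpha_3})$, where $\alpha_1, \alpha_2, \alpha_3$ denote the $\Q$-conjugates of $\alpha$ in $L$.

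By Kummer theory $\Gal(F/L) \simeq \Hom(V, \mu_2)$, where $V \subseteq L^*/(L^*)^2$ is the subgroup generated by the classes $[\alpha_i]$. Since $G^+$ permutes the $[\alpha_i]$ transitively, $V$ is a $G^+$-invariant subgroup of $(\Z/2)^3$, and the four such subgroups have orders $1,2,4,8$. I would rule out order $1$ (which would put $\sqrt{\alpha}\in L$ and hence $K\subseteq L$, contradicting $L$ totally real and $K$ totally imaginary) and order $4$ (the only $G^+$-invariant relation of this type is $\alpha_1\alpha_2\alpha_3 \in (L^*)^2$, but $N_{K^+/\Q}(\alpha)$ is a negative rational, and a negative rational is never a square in a totally real field).

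If $|V|=2$, all $\alpha_i$ share a common nontrivial class modulo squares in $L$, so $F = L(\sqrt{\alpha})$ has degree $2|G^+|$ over $\Q$. When $G^+=C_3$ this forces $F=K$, so $K/\Q$ is Galois of degree $6$; complex conjugation is central in the Galois group of any CM field, which makes $G$ abelian of order $6$, hence $G\cong C_6$. When $G^+=S_3$, we have a central extension $1\to C_2\to G\to S_3\to 1$; there are exactly two such, namely $D_{12}$ and the dicyclic group $Q_{12}$. I would rule out $Q_{12}$ by observing that $\Gal(F/K)$ is a subgroup of order $2$ that is not normal in $G$ (because $K$ is not Galois over $\Q$, as $K^+$ is not), whereas every element of order $2$ in $Q_{12}$ is central. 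Thus $G\cong D_{12}$.

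If $|V|=8$, then $\Gal(F/L)=(C_2)^3$ is generated by the sign flips $\rho_i:\sqrt{\alpha_i}\mapsto -\sqrt{\alpha_i}$, and $G^+$ acts by permuting the $\rho_i$. To exhibit a semidirect product decomposition, I would fix specific square roots $\sqrt{\alpha_i}\in F$ and, for each $\sigma\in G^+$, select the unique lift $\tilde\sigma\in G$ with $\tilde\sigma(\sqrt{\alpha_i}) = +\sqrt{\alpha_{\sigma(i)}}$ for every $i$ (any lift can be adjusted by elements of $\Gal(F/L)$ to achieve this). The computation $\tilde\sigma\tilde\tau(\sqrt{\alpha_i}) = \sqrt{\alpha_{\sigma\tau(i)}} = \widetilde{\sigma\tau}(\sqrt{\alpha_i})$ shows that these lifts form a subgroup of $G$ complementary to $\Gal(F/L)$, giving $G\cong (C_2)^3\rtimes G^+$. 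The subtlest step is arguably this last one: when $G^+=S_3$, $H^2(S_3,(C_2)^3)$ is nonzero, so the extension need not split on abstract grounds, and the explicit ``no sign change'' construction of a section is essential.
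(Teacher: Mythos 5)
Your proof is correct, but it takes a genuinely different route from the paper's (which follows Dodson). The paper splits into cases according to the arithmetic of $K$ --- whether $K/\Q$ is Galois, and whether $K$ contains an imaginary quadratic subfield --- and rules out $[F:L]=4$ by an orbit count: a $3$-cycle acting on $\Gal(F/L)\simeq (C_2)^v$ fixes exactly the identity and complex conjugation, forcing $3\mid 2^v-2$. You instead classify the $G^+$-equivariant quotients of the permutation module $\F_2^3$ that can arise as $\langle[\alpha_1],[\alpha_2],[\alpha_3]\rangle\subseteq L^*/(L^*)^2$, and kill the order-$4$ possibility by noting it forces the negative rational $\alpha_1\alpha_2\alpha_3=\mathbf{N}_{K^+/\Q}(\alpha)$ to be a square in the totally real field $L$; this neatly sidesteps the step, left implicit in the paper, that $v\neq 1$ when $K$ has no imaginary quadratic subfield. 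Your treatment of the $[F:L]=2$, $G^+=S_3$ case via central extensions of $S_3$ by $C_2$ (eliminating $Q_{12}$ because $\Gal(F/K)$ is a non-normal order-$2$ subgroup) replaces the paper's direct observation that $F=LK_1$ gives $G\simeq C_2\times S_3$. Two remarks on the trade-offs. First, your construction of an explicit section in the $(C_2)^3$ case (the ``no sign change'' lifts) addresses a point the paper's sketch passes over entirely, and you are right that it is not automatic since $H^2(S_3,\F_2^3)\neq 0$; this is a genuine improvement in completeness. Second, what the paper's case division buys --- and yours does not --- is the dictionary between Case 3 and the condition that $K$ contains no imaginary quadratic subfield, which the paper relies on later (e.g.\ in Corollary \ref{cor: case3}); that equivalence is not part of the proposition's statement, so your proof is complete as a proof of the proposition, but it would need a small supplement to support the later use. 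One stylistic caveat: you describe $V$ as a ``$G^+$-invariant subgroup of $(\Z/2)^3$'' when it is really a $G^+$-equivariant quotient of the permutation module; the submodule lattice is the same list of orders $1,2,4,8$, so nothing breaks, but the kernel--quotient bookkeeping (kernel of order $2$ is the diagonal, giving $|V|=4$ and the relation $\alpha_1\alpha_2\alpha_3\in(L^*)^2$) deserves to be stated precisely.
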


\begin{proof}
This is proved in Section 5.1.1 of \cite{Dodson}, for example.
\end{proof}

In the rest of this section, we sketch the proof of Proposition
\ref{prop:classification}, following Dodson. Since we restrict to the
case of sextic CM-fields, the presentation can be simplified. In the
course of the proof, we also give more details on the structure of the
extensions $F/\Q$ and $K^+/\Q$ in the different cases. In particular,
we show that Case 3 is precisely the case where $K$ does not contain
an imaginary quadratic subfield.

Galois theory implies that we have the following exact sequence of groups:
\[
1\to\Gal(F/L)\to G\to G^+\to 1.
\]

\begin{lemma} \label{lem:extension}
We have
\[
\Gal(F/L)\simeq (C_2)^v, \qquad 1\leq v\leq 3
\]
and
\[
G^+\in \{C_3, S_3\}.
\]
\end{lemma}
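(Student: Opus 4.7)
The plan is to handle the two claims separately. For the statement about $G^+$, note that $L$ is by definition the Galois closure of the cubic extension $K^+/\Q$, so $G^+=\Gal(L/\Q)$ embeds into $S_3$ (the permutation group on the three complex embeddings of $K^+$), and must contain a transitive subgroup (since $K^+/\Q$ is a single field), so $G^+\in\{C_3,S_3\}$. This is immediate.

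For the statement about $\Gal(F/L)$, I would start from the standard description $K=K^+(\sqrt{-\alpha})$ for some totally positive $\alpha\in K^+$. Let $\alpha_1=\alpha,\alpha_2,\alpha_3$ denote the three conjugates of $\alpha$ (i.e., the images under the three $\Q$-embeddings $K^+\hookrightarrow L$). Because the Galois closure $F$ of $K$ over $\Q$ must contain every conjugate of $\sqrt{-\alpha}$, and because $F$ contains the Galois closure $L$ of the subfield $K^+$, I would argue that
\[
F=L\bigl(\sqrt{-\alpha_1},\sqrt{-\alpha_2},\sqrt{-\alpha_3}\bigr).
\]
This exhibits $F/L$ as a Kummer extension of exponent $2$, so $\Gal(F/L)$ is an $\F_2$-vector space of dimension at most $3$, giving $\Gal(F/L)\simeq (C_2)^v$ with $v\leq 3$.

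To get the lower bound $v\geq 1$, I would simply note that $F\neq L$: the field $F$ contains the totally imaginary field $K$, while $L$, being the Galois closure of the totally real field $K^+$, is itself totally real. Hence $\Gal(F/L)$ is nontrivial, and $v\geq 1$.

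The main thing to be careful about is the claim $F=L(\sqrt{-\alpha_1},\sqrt{-\alpha_2},\sqrt{-\alpha_3})$: one inclusion is clear since the right-hand side is contained in any Galois extension of $\Q$ containing $K$, and for the other one must check that $L(\sqrt{-\alpha_1},\sqrt{-\alpha_2},\sqrt{-\alpha_3})$ is actually Galois over $\Q$, which reduces to seeing that the $G^+$-action on $L$ permutes the $\alpha_i$ (and hence the $\sqrt{-\alpha_i}$ up to choice of sign, which does not enlarge the field). This is really the only place that needs care, and it follows from the construction of the $\alpha_i$ as the $G^+$-orbit of $\alpha$.
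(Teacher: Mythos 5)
Your proposal is correct and follows essentially the same route as the paper: write $K=K^+(\sqrt{-\delta})$ with $\delta\in K^+$ totally positive, observe that $F$ is generated over $L$ by the square roots of the conjugates of $-\delta$ (of which there are at most three, since $[\Q(\delta):\Q]$ divides $3$), and conclude that $\Gal(F/L)$ is an elementary abelian $2$-group of rank at most $3$, with the statement about $G^+$ being immediate from $[K^+:\Q]=3$. Your added remarks (checking that $L(\sqrt{-\alpha_1},\sqrt{-\alpha_2},\sqrt{-\alpha_3})$ is Galois over $\Q$, and deriving $v\geq 1$ from $L$ being totally real while $F$ contains the totally imaginary field $K$) only make explicit steps the paper leaves implicit.
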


\begin{proof}
This lemma is a special case of the proposition in Section 1.1 of
\cite{Dodson}. We give the proof here for convenience.

We first remark that $K=K^+(\sqrt{-\delta})$ for some  totally positive
square-free $\delta\in K^+$. We write $\delta_1:=\delta,
\delta_2, \ldots, \delta_r$ for the $G^+$-conjugates of $\delta$. It
follows that
\[
F=L(\sqrt{-\delta_1}, \ldots, \sqrt{-\delta_r}).
\]
Every element $h\in \Gal(F/L)$ sends $\sqrt{-\delta_i}$ to
$\pm\sqrt{-\delta_i}$. Moreover, $h$ is determined by its action on
these elements. It follows that $\Gal(F/L)\simeq (C_2)^v$ is an
elementary abelian $2$-group.

Since $\delta\in K^+$ it follows that $[\Q(\delta):\Q]$ divides
$3$. We conclude that the number of $G^+$-conjugates of $\delta$
is at most $3$.

The statement on $G^+$ immediately follows from the fact that $[K^+:\Q]=3$.
This proves the lemma.
\end{proof}

\begin{proof}[Proof of Proposition~\ref{prop:classification}]
We start the classification. Note that $\Gal(K/K^+)$ is generated by
complex conjugation. It follows that complex conjugation is also an
element of $G$. This element, which we denote  by $\rho$,  is an element of
the center of $G$.

\bigskip\noindent
\textbf{Case I}: $K/\Q$ Galois.

Since $K/\Q$ is Galois, $G=\Gal(K/\Q)$ is a group of
order $6$, hence either cyclic or $S_3$. Since the Galois closure $L$
of $K^+/\Q$ is a totally real subfield of $K$, it follows that
$K^+=L$.  This implies that $\Gal(K/K^+)$ is a normal subgroup of $G$
which has order $2$. It follows that $G\simeq C_6$ is cyclic.  Note
that $K$ contains the imaginary quadratic subfield $K_1:=K^{C_3}$ and
$K=K_1K^+$. This  corresponds to Case 1 of Proposition
\ref{prop:classification}.

\bigskip\noindent 
\textbf{Case II}: $K/\Q$ is not Galois and $K$
contains an imaginary quadratic field $K_1$.

Since $K$ contains an imaginary quadratic field $K_1$, we have
$F=LK_1$ and $G\simeq C_2\times G^+$. If $G^+\simeq C_3$, then $L=K^+$ and
$K/\Q$ is Galois, which contradicts our assumption. It follows that
$G^+\simeq S_3$ and $G\simeq C_2\times S_3$. This is Case 2 of
Proposition \ref{prop:classification}. We obtain the field
diagram in Figure~\ref{fig:fielddiagram2}.

\begin{figure}[h]
\begin{tikzpicture}[node distance=2cm]
 \node (Q)     at (0,0)             {$\mathbb{Q}$};
 \node (QB)     at (-2,2)             {$K_1$};
 \node (K+)  at (2, 3)   {$K^+$};
 \node (K) [above of=K+]  {$K$};
 \node[right] (L)     at (4,4.5)             {$L$};
 \node (F) [above of=K]  {$F=LK_1 $};
\node[left] at (0.9,1.5) {3};
  \node[left] at (2,4) {2};
\node[right] at (-1,1.2) {2};
\node[left] at (3.5,4.2) {2};
\draw (Q)   -- (K+);
\draw (Q)   -- (QB);
 \draw (K+)  -- (K);
 \draw (K+)  -- (L);
 \draw (K)  -- (F);
 \draw (QB)  -- (F);
 \draw (QB)  -- (K);
 \draw (L)  -- (F);
\end{tikzpicture}
\caption{Field extensions in Case 2}
\label{fig:fielddiagram2}
\end{figure}
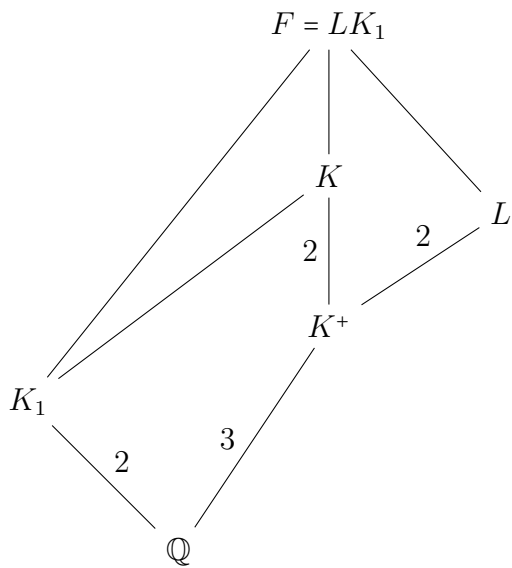

\bigskip\noindent 
\textbf{Case III}: $K/\Q$ is not Galois and $K$
does not contain an imaginary quadratic subfield.

This case corresponds to Case 3 of
Proposition \ref{prop:classification}. In this case the integer $v$
from Lemma \ref{lem:extension} is not equal to $1$, i.e., we have
$v=2$ or $3$. The following claim completes the proof of
Proposition \ref{prop:classification}.

\bigskip\noindent \textbf{Claim}: The case $v=2$ does not occur.
This claim is a special case of the second proposition in Section
5.1.1 of \cite{Dodson}. We give the proof here for completeness.

 Recall that $\rho\in \Gal(F/L)$ denotes complex conjugation and is
 contained in the center of $G$.  Let $\sigma\in G^+$ be an element of
 order $3$.  Then $\sigma$ acts on $\Gal(F/L)=(C_2)^v$ by
 conjugation. This action has two orbits of length $1$, corresponding
 to the identity element and $\rho$. All other orbits have length
 $3$. It follows that
$3\mid (2^v-2)$.
The claim follows.
\end{proof}

Of primary interest to us in the rest of this paper is Case 3 of
Proposition~\ref{prop:classification}, in which $K$ does not contain
an imaginary quadratic subfield.  We have see that $G\simeq (C_2)^3\rtimes
G^+$ with $G^+\in
\{C_3,S_3\}$.
The following diagram describes the field extensions in Case 3.

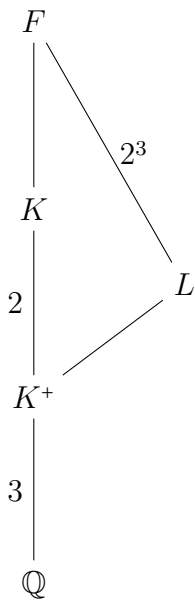
\begin{figure}[h]
\begin{tikzpicture}[node distance=3cm]
 \node (Q)      at (0,0)            {$\mathbb{Q}$};
 \node (K+)  at (0,2.5)   {$K^+$};
 \node (K) at (0,5)  {$K$};
 \node (F) at (0,7.5) {$F$};
  \node (L) at (2,4) {$L$};
\draw (Q)   -- (K+);
 \draw (K+)  -- (K);
\draw (K+)   -- (L);
\draw (L)  -- (F);
\draw (K)   -- (F);
 \node[left] at (0, 1.25)   {$3$};
 \node[left] at (0, 3.75)   {$2$};
\node[right] at (1,5.75)    {$2^3$};
\end{tikzpicture}
\caption{Field extensions in Case 3}
\end{figure}

\section{Primitive CM-types}\label{sec:prim}

Let $K$ be a sextic CM-field. As in Section \ref{sec:Galois}, we write
$K^+$ for the totally real cubic subfield of $K$. The complex
embeddings $K\hookrightarrow
\C$ come in pairs $\{\psi, \rho\circ \psi\}$, where $\rho$ denotes complex
conjugation. Recall that a CM-type $(K, \varphi)$ is a choice of one
embedding from each of these pairs.  The goal of this section is to
determine the primitive CM-types. We start by recalling the definition
from \cite{Milne}, Section 1.1. For examples we refer to
Section \ref{sec:exa}.

\begin{defn}
Let $(K,\varphi)$ and
$(K_1,\varphi_1)$ be CM-types. We say that $(K,\varphi)$ is {\em
induced} from $(K_1, \varphi_1)$ if $K_1$ is a subfield of $K$
and the restriction of $\varphi$ to $K_1$ coincides with
$\varphi_1$. A CM-type is called {\em primitive} if it is not induced
from a CM-type on any proper CM-subfield of $K$.
\end{defn}

Let $A$ be an abelian variety and let $K$ be a CM-field with
$[K:\Q]=2\dim(A)$. We say that $A$ has {\em complex multiplication
  (CM) by} $K$ if the endomorphism algebra
$\End^0(A)=\End(A)\otimes\Q$ contains $K$.  We say that a curve $C$
has {\em CM by} $K$ if its Jacobian has CM by $K$. We say that $A$ (or
$C$) has CM if there exists a CM-field $K$ such that $A$ (or $C$) has
CM by $K$. If $\End(A)$ is an order ${\mathcal O}$ in a CM-field $K$
with $[K:\Q]=2\dim(A)$, we say that $A$ has {\em CM by} ${\mathcal
  O}$.

The following theorem gives a geometric interpretation of what it
means for the CM-type of a CM-abelian variety to be primitive in
characteristic zero.   For
convenience, we say that an abelian variety $A$ defined over a field
$M$ is simple if it is absolutely simple, meaning that
$A\otimes_M \overline{M}$ is not isogenous to a product of abelian
varieties of lower dimension. Similarly, we say that two abelian
varieties $A_1, A_2$ defined over $M$ are isogenous if there exists an
isogeny $\varphi:A_1\to A_2$ defined over the algebraic closure of
$M$.

\begin{theorem}\label{thm: primsimple}
 Let $A$ be an abelian variety defined over a field of characteristic
 zero.  Suppose that $A$ has CM with CM-type $(K,\varphi)$.  Then
 the CM-type $(K, \varphi)$ is primitive if and only if the abelian
 variety $A$ is simple.
\end{theorem}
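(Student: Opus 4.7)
The plan is to reduce to working analytically over $\C$ and then analyze both implications via the uniformization of CM abelian varieties. Since $A$ is defined in characteristic zero, I may embed its field of definition into $\C$; this preserves both absolute simplicity and the CM-type. By the standard analytic description of CM abelian varieties, $A(\C)\simeq \C^g/\Phi_\varphi(\mathfrak{a})$ for some fractional $\cO_K$-ideal $\mathfrak{a}$, where
\[
\Phi_\varphi\colon K\hookrightarrow \C^g,\qquad x\mapsto (\psi(x))_{\psi\in\varphi},
\]
and the action of $K$ on $A$ is induced from multiplication by $K$ on itself.

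I would first handle the easier direction, namely that $(K,\varphi)$ not primitive implies $A$ not simple. Suppose $\varphi$ is induced from a CM-type $(K_1,\varphi_1)$ on a proper CM-subfield $K_1\subsetneq K$, and let $m=[K:K_1]\ge 2$ and $g_1=[K_1:\Q]/2$, so $g=mg_1$. Choose a $K_1$-basis of $K$ to identify $K\simeq K_1^m$ as $K_1$-modules. Since $\varphi$ is induced from $\varphi_1$, each $\psi\in\varphi$ restricts to some element of $\varphi_1$, so in the chosen basis $\Phi_\varphi$ decomposes into $m$ blocks, each a copy of $\Phi_{\varphi_1}$. This yields an explicit isogeny $A\sim B^m$, where $B=\C^{g_1}/\Phi_{\varphi_1}(\cO_{K_1})$ is the CM abelian variety of type $(K_1,\varphi_1)$, and in particular $A$ is not simple.

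For the converse (namely $A$ not simple implies $(K,\varphi)$ not primitive), I would apply Poincar\'e's complete reducibility theorem to write $A\sim \prod_i B_i^{n_i}$ with the $B_i$ simple and pairwise non-isogenous. Since $K$ is a field, every nonzero $k\in K$ is invertible in $\End^0(A)$ and hence projects injectively into $\End^0(B_i)$ for every $i$; combined with the classical bound $[K:\Q]\le 2\dim B_i$ coming from the faithful action on $H_1(B_i,\Q)$, a dimension count forces a single isogeny class, so $A\sim B^n$ for a simple $B$ with $n\ge 2$. Albert's classification in the CM setting identifies $K_1:=\End^0(B)$ as a CM-field of degree $2\dim B<[K:\Q]$, embedded as scalar matrices in $\End^0(A)=\Mat_n(K_1)$.

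The main obstacle is the final step: identifying $K_1$ with a proper CM-subfield of $K$ itself (rather than merely as an abstract field with an embedding into $\End^0(A)$) and matching up the CM-types. My approach is to read off $K_1$ as the stabilizer in $K$ of a $K$-equivariant projection $A\twoheadrightarrow B$ obtained from $A\sim B^n$, and to recover the CM-type of $B$ as the restriction of $\Phi_\varphi$ to the corresponding factor; this restriction is by construction equal to $\Phi_{\varphi|_{K_1}}$, so $(K,\varphi)$ is induced from $(K_1,\varphi|_{K_1})$, contradicting primitivity.
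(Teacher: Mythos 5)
The paper itself offers no argument for this statement---it simply cites Theorem 3.5 of Chapter 1 of Lang's \emph{Complex multiplication}---so what you have written is in effect a reconstruction of that standard proof, and its overall architecture (analytic uniformization for the easy direction; Poincar\'e reducibility, isotypicity, and identification of $\End^0(B)$ with a CM-subfield of $K$ for the converse) is the right one. The direction ``imprimitive $\Rightarrow$ not simple'' is essentially complete. In the converse, however, the step you yourself flag as the main obstacle is still not a proof, and there are two smaller slips along the way. First, $K$ does not embed into $\End^0(B_i)$ but only into $M_{n_i}(\End^0(B_i))=\End^0(B_i^{n_i})$, so the faithful module is $H_1(B_i^{n_i},\Q)$ and the correct bound is $[K:\Q]\mid 2n_i\dim B_i$; as literally written, $[K:\Q]\le 2\dim B_i$ is false for every factor of a non-simple $A$. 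The corrected bound still forces a single isotypic factor, since $[K:\Q]=2\sum_i n_i\dim B_i$. Second, there is no ``$K$-equivariant projection $A\twoheadrightarrow B$'': $K$ cannot act on $B$ at all, because that would make $H_1(B,\Q)$ a $K$-vector space of $\Q$-dimension $2\dim B<[K:\Q]$. The standard repair is to observe that $H_1(A,\Q)$ is free of rank one over $K$, so $K$ is its own centralizer in $\End^0(A)$; since the center $K_1=\End^0(B)$ of $\End^0(A)=M_n(K_1)$ centralizes $K$, it must lie \emph{inside} $K$, and this is how $K_1$ becomes a proper CM-subfield (your ``stabilizer of the kernel of a projection'' can be made to work, but it requires a separate argument that this stabilizer is exactly $K_1$ and not larger).

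Third, and most importantly, the claim that the restriction of $\varphi$ to $K_1$ is ``by construction'' the CM-type inducing $\varphi$ is precisely what has to be proved. The missing counting argument is: compute the character of $\Lie(A)$ as a $K_1\otimes\C$-module in two ways. From $A\sim B^n$ it is $n\sum_{\psi_1\in\varphi_1}\psi_1$, where $\varphi_1$ is the CM-type of $B$; from the $K$-action it is $\sum_{\psi\in\varphi}\psi|_{K_1}$. Hence each $\psi_1\in\varphi_1$ occurs exactly $n=[K:K_1]$ times among the restrictions $\psi|_{K_1}$, $\psi\in\varphi$, so all $n$ extensions of $\psi_1$ to $K$ lie in $\varphi$; since $|\varphi|=n|\varphi_1|$, the set $\varphi$ is exactly the set of all such extensions, i.e.\ $\varphi$ is induced from $\varphi_1$ (and $\varphi_1$ contains no conjugate pair because $\varphi$ does not). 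With these three repairs your outline becomes the proof that the paper delegates to Lang.
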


\begin{proof} This is proved in  Theorem 3.5 of Chapter 1
 of \cite{Lang}. See also Remark 1.5.4.2 of \cite{ChaiConradOort}.
\end{proof}

We refer to Section 1.5.5 of \cite{ChaiConradOort} for an explanation
of why we need to assume that $A$ is defined over a field of
characteristic zero in Theorem \ref{thm: primsimple}.

The following result gives a useful criterion for determining
whether a given CM-type is primitive. For a proof, we refer to
Theorem 3.6 of Chapter 1 of \cite{Lang}. For a CM-type $(K, \varphi)$ and
$h\in \Aut(K)$, we write
\[
\varphi h=\{\varphi_i\circ h\mid \varphi_i\in \varphi\}.
\]

\begin{proposition}\label{prop: Lang} Let $(K, \varphi)$ be a CM-type. We write
 $(F, \Phi)$ for the induced CM-type of the Galois closure of
  $K/\Q$. Let
\[
H_\Phi=\{h\in G=\Gal(F/\Q)\mid \Phi h=\Phi\}.
\]
Then $(K, \varphi)$ is primitive if and only if
\[
K=F^{H_\Phi}.
\]
\end{proposition}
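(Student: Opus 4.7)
The plan is to translate the question into one about subgroups of $G$ via the standard Galois correspondence, and then to match the ``induction'' relation for CM-types with containment of stabilizers of $\Phi$. Fix a base embedding $\psi_0 : F \hookrightarrow \C$ so that every complex embedding of $F$ has the form $\psi_0 \circ g$ for a unique $g \in G$; this identifies the set of complex embeddings of $F$ with $G$, and the right action $\psi \mapsto \psi \circ h$ with right multiplication $g \mapsto gh$. Under this identification, complex conjugation corresponds to the central element $\rho \in G$, and the defining property $\Phi \sqcup \rho\Phi = G$ of the CM-type becomes a statement about the subset $\Phi \subseteq G$.

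Set $H = \Gal(F/K)$, so that restriction identifies embeddings of $K$ with the left coset space $G/H$. The CM-type $\varphi$ corresponds to a subset $\bar\varphi \subseteq G/H$, and the induced $\Phi$ is its preimage in $G$. In particular $\Phi$ is a union of left $H$-cosets, which gives $\Phi H = \Phi$, i.e.\ $H \subseteq H_\Phi$; the content of the proposition is that $(K,\varphi)$ is primitive precisely when this inclusion is an equality.

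The main step is the following equivalence: for an intermediate subgroup $H \subseteq H_1 \subseteq G$ with corresponding subfield $K_1 = F^{H_1}$, the CM-type $(K,\varphi)$ is induced from a CM-type on $K_1$ if and only if $H_1 \subseteq H_\Phi$. In one direction, if $\varphi$ is the preimage of some $\varphi_1 \subseteq G/H_1$, then $\Phi$ is already a union of left $H_1$-cosets, so $\Phi H_1 = \Phi$. Conversely, if $H_1 \subseteq H_\Phi$, then $\Phi$ descends to a subset $\varphi_1 \subseteq G/H_1$, and the partition $G = \Phi \sqcup \rho\Phi$ (together with centrality of $\rho$) descends to $G/H_1 = \varphi_1 \sqcup \rho\varphi_1$, exhibiting $\varphi_1$ as a CM-type on $K_1$ inducing $\varphi$. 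Granting this equivalence, $(K,\varphi)$ is imprimitive iff there is some $H \subsetneq H_1 \subseteq H_\Phi$, iff $H \subsetneq H_\Phi$, iff $K \supsetneq F^{H_\Phi}$.

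The main obstacle is the side condition that $K_1 = F^{H_1}$ must itself be a \emph{CM}-field for the induced-CM-type language to apply. I would handle this by first observing that $\rho \notin H_\Phi$: since $\rho\Phi = G\setminus \Phi$ is disjoint from $\Phi$, one cannot have $\Phi\rho = \Phi$. Centrality of $\rho$ then forces $\rho H_1 \rho^{-1} = H_1$ for any $H_1 \subseteq H_\Phi$, so $K_1$ is stable under complex conjugation with $\rho|_{K_1}$ of order two; the fixed field $K_1^\rho$ is totally real because it lies inside $K^+$, and $K_1$ is totally imaginary because a real embedding of $K_1$ would, via the identification above, force a $G$-conjugate of $\rho$, hence $\rho$ itself, into $H_1$. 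Once this verification is in place, the rest of the argument is a routine translation between the field-theoretic and group-theoretic sides.
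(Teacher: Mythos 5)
Your argument is correct and complete, including the often-omitted verification that $F^{H_1}$ is itself a CM-field (via $\rho\notin H_\Phi$ and centrality of $\rho$). The paper gives no proof of this proposition --- it simply cites Theorem 3.6 of Chapter 1 of \cite{Lang} --- and your group-theoretic translation is essentially the standard argument found there, so there is nothing to flag.
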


We now determine the primitive sextic CM-types in each of the cases of
Proposition \ref{prop:classification}. We first consider Case
$3$. Recall that in the proof of Proposition \ref{prop:classification}
we showed that Case 3 is precisely the case where $K$ does not contain
an imaginary quadratic subfield.

\begin{cor}\label{cor: case3}
Suppose that we are in Case 3 of
Proposition \ref{prop:classification}, i.e., $K$ does not contain an
imaginary quadratic field. Then every CM-type $(K, \varphi)$ is
primitive.
\end{cor}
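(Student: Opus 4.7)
The plan is to reduce the corollary to the statement that $K$ has no proper CM-subfield at all: once this is established, primitivity of every CM-type $(K,\varphi)$ is immediate from the definition, since there is no proper CM-subfield from which such a type could be induced.

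To show that $K$ has no proper CM-subfield, I would use the fact that a CM-field is by definition a totally imaginary quadratic extension of a totally real field, and therefore has even degree over $\Q$. Since $[K:\Q]=6$, any proper subfield of $K$ has degree $1$, $2$, or $3$, and so the only candidate for a proper CM-subfield is an imaginary quadratic subfield. By the defining hypothesis of Case 3 of Proposition~\ref{prop:classification}, $K$ contains no imaginary quadratic subfield, so no proper CM-subfield of $K$ exists. As a sanity check, I would also confirm that the degree-$3$ subfield $K^+$ is totally real (hence not CM) and that it is the unique degree-$3$ subfield of $K$: any $K_3\subsetneq K$ of degree $3$ with $K_3\neq K^+$ would satisfy $K_3\cap K^+=\Q$, which would force $[K_3K^+:\Q]=9>6=[K:\Q]$, a contradiction.

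As an alternative (and perhaps more in keeping with the preceding material), one can phrase the same argument through Proposition~\ref{prop: Lang}: for any CM-type $\varphi$ we have $H=\Gal(F/K)\subseteq H_\Phi$, so $F^{H_\Phi}\subseteq F^H=K$ is itself a CM-subfield of $K$, and the absence of a proper CM-subfield forces $F^{H_\Phi}=K$, which is exactly Lang's criterion for primitivity. There is no substantive obstacle; the whole content of the corollary is the elementary observation that every CM-field has even degree, combined with the Case 3 hypothesis.
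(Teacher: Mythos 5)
Your proposal is correct and takes essentially the same route as the paper: the paper's proof likewise reduces to the observation that a proper CM-subfield of a sextic CM-field must be imaginary quadratic (since CM-fields have even degree over $\Q$), which the Case~3 hypothesis excludes. One harmless aside: your ``sanity check'' that $K^+$ is the unique cubic subfield is not needed (a cubic subfield can never be a CM-field anyway), and the claim $[K_3K^+:\Q]=9$ there is not justified as stated, since the compositum lies inside $K$ and so has degree dividing $6$ --- but nothing in your main argument depends on it.
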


\begin{proof} Suppose  that $(K,\varphi)$ is not primitive.
 Then $K$ contains a proper CM-subfield $K_1$. Since $K$ is sextic,
 $K_1$ is an imaginary quadratic field. This yields a contradiction.
\end{proof}

\subsection{Primitive types in Case 1}
\label{subsec:Primitive types in Case 1}

We now consider Case 1 from
Proposition~\ref{prop:classification}. This is the case in which
$K/\Q$ is Galois, with Galois group $G\simeq C_6$. We choose a
generator $\sigma$ of $G$. Note that complex conjugation corresponds
to $\sigma^3$. Up to replacing $\varphi$ by its complex conjugate,
every CM-type $(K, \varphi)$ may be written as
\[
\varphi_{a,b}=\{1, \sigma^a, \sigma^b\}, \qquad 0<a,b<6, \quad a\equiv
1\pmod{3},\ b\equiv 2\pmod{3}.
\]
We find $4$ cases:
\[
\{a,b\}\in \{\{1,2\}, \{1,5\}, \{4,2\}, \{4,5\}\}.
\]
Note that changing the generator $\sigma$ of $G$ to $\sigma^{-1}$
changes $\{4,5\}$ to $\{1,2\}$, therefore we do not have to consider
the choice $\{4,5\}$.

We write $H_{a,b}$ for the subgroup fixing the CM-type as in
Proposition \ref{prop: Lang}. Then $H_{1,2}=H_{1,5}=\{1\}$ and
$H_{4,2}=\langle \sigma^2\rangle\simeq C_3$. Note that
$K_1:=K^{H_{4,2}}$ is the imaginary quadratic subfield of
$K$, which is a CM-field. We conclude that $\varphi_{4,2}$ is
induced from $K_1$, and hence imprimitive. The other CM-types are primitive.

\subsection{Primitive types in Case 2}

 We now consider Case 2 from
Proposition \ref{prop:classification}. We refer to Section
\ref{sec:Galois} for a description of the fields involved.  Recall
 that $K=K_1K^+$. Therefore, an embedding $\psi:K\hookrightarrow \C$
corresponds to an ordered pair $(\psi_1, \psi^+)$, where
$\psi_1:K_1\hookrightarrow \C$ is an embedding of $K_1$ and
$\psi^+:K^+\hookrightarrow \C$ is an embedding of $K^+$. Since $K^+$
is totally real, the image of $\psi^+$ is contained in $\R$. We denote the three
possible complex embeddings of $K^+$ by $\chi_i$ for $i=1,2,3$.  We
fix a complex embedding of $K_1$ and denote it by $1$. We denote the other
complex embedding of $K_1$ by $-1$.

A CM-type
 $(K, \varphi)$ consists of a triple of these ordered pairs in which no two of the pairs are complex conjugates.
Since $\Gal(K_1/\Q)$ is generated by complex conjugation, we simply
 choose one of the two complex
 embeddings of $K_1$ for each embedding $\chi_i$ of $K^+$. This means
 that we may write
\[
\varphi=\{(\epsilon_i, \chi_i)\mid i=1,2,3\}, \quad \epsilon_i\in \{\pm 1\}.
\]
Identifying $\varphi$ with its complex conjugate yields four different
CM-types.

We determine the imprimitive types. The only CM-field properly
contained in $K$ is the imaginary quadratic field $K_1$. The
restriction of the embedding $(\epsilon_i, \chi_i)$ to $K_1$ is just
$\epsilon_i$. Therefore, the CM-type
$\varphi=\{(\epsilon_i, \chi_i)\}$ is imprimitive if and only if
$\epsilon_i$ is independent of $i$.  We conclude that there is a
unique imprimitive CM-type. The other three are primitive.

\subsection{Examples of CM-types}
\label{subsec:CMexa}
We give examples of CM-types illustrating each of the three cases of
Proposition \ref{prop:classification}.
\begin{example}[$K/\Q$ is Galois with Galois group $G \simeq C_6$.]
\label{eg:zeta7}
Let $K$ be $\Q(\zeta_7)$ where $\zeta_7$ is a primitive seventh root
of unity. The maximal totally real subfield of $K$ is
$K^+=\Q(\zeta_7+\zeta_7^{-1})$, which has degree three over $\Q$ (the
minimal polynomial of $\zeta_7+\zeta_7^{-1}$ over $\Q$ is $x^3 + x^2 -
2x - 1$). The field $K$ is a totally imaginary quadratic extension of
$K^+$.

The automorphism $\sigma$ which maps $\zeta_7$ to $\zeta_7^5$
generates $ \Gal(K/\Q)$.  The fixed field of $\langle\sigma^2\rangle$
is $\Q(\zeta_7^4 + \zeta_7^2 + \zeta_7)=\Q(\sqrt{-7})$. This  is the
unique imaginary quadratic extension of $\Q$ contained in
$\Q(\zeta_7)$. Therefore, the only imprimitive CM-type admitted by $K$ is
$\varphi_{2,4}=\{1,\sigma^2,\sigma^4\}$; the
CM-types $\varphi_{a,b}=\{1, \sigma^a,\sigma^b\}$ for $\{a,b\} \neq \{4,2\}$ with $a\equiv 1\pmod{3},\ b\equiv 2\pmod{3}$ are all primitive.
\end{example}

The following examples have been taken from the database of Kl\"uners
and Malle (\cite{KluenersMalle}).

\begin{example}[The Galois closure of $K/\Q$ is $D_{12}$]
Let $K$ be the sextic field obtained by adjoining a root of the
irreducible polynomial $f(x)=x^6 - 3x^5 + x^4 + 10x^2 - 9x + 3$. Then
$K$ is a totally imaginary quadratic extension of the totally real
cubic field $K^+=\Q(\alpha)$ where the minimal polynomial of $\alpha$
is $g(x)=x^3 - 7x^2 + 12x - 3$. The Galois closure $F$ of $K/\Q$ is
the compositum of the Galois closure of $K^+$ with the unique
imaginary quadratic subfield $K_1$ of $K$, given by the minimal
polynomial $x^2+3x+3$.  The Galois group of $F$ is isomorphic to
$S_3 \times C_2\simeq D_{12}$.  Denote the roots of $g(x)$ by
$\alpha_1:=\alpha, \alpha_2,\alpha_3$.

 Let $\chi_i: \alpha_1 \mapsto \alpha_i$ denote the three real
 embeddings of $K^+$ and $\pm 1$ denote the two complex embeddings of
 $K_1$.  Then the CM-type $\varphi=\{(1,\chi_1),(1,
 \chi_2),(1,\chi_3)\}$ of $K$ is imprimitive, since its restriction to
 the quadratic imaginary subfield $K_1$ is also a CM-type.  The remaining
 three CM-types of $K$ are primitive. For clarity, the
 primitive CM-types are as follows: $\{(1,\chi_1), (-1,\chi_2), (-1,\chi_3)\},
 \{(1,\chi_1),(1,\chi_2),(-1,\chi_3)\},
 \{(1,\chi_1),(-1,\chi_2),(1,\chi_3)\}$.
\end{example}

\begin{example}[The Galois closure of $K/\Q$ is $(C_2)^3 \rtimes C_3$]
  Let $K = \Q(\beta)$ be the degree $6$ extension of $\Q$ where the
minimal polynomial of $\beta$ is $f(x)=x^6 - 2x^5 + 5x^4 - 7x^3 +
10x^2 - 8x+8$. Let $F$ be the Galois closure of $K$.  Then
$\Gal(F/\Q)$ is $(C_2)^3
\rtimes C_3$.
 Moreover, $K$ is a CM-field since $K$ is a totally imaginary
 quadratic extension of $K^+=\Q(\alpha)$ where the minimal polynomial of
 $\alpha$ over $\Q$ is $g(x)=x^3 - 7x^2 + 14x - 7$.  Note that $K$ contains
no quadratic subfield, hence every CM-type is primitive.  \end{example}

\subsection{Comparison with the genus $2$ case}\label{sec:g=2}

The following proposition characterizes primitive CM-types for quartic
CM-fields. 

\begin{proposition}\label{prop:g=2} Let $K$ be a quartic CM-field. The 
following are equivalent.
\begin{enumerate}
\item The CM-type is primitive.
\item The CM-field $K$ does not contain an imaginary quadratic
subfield.
\end{enumerate}
\end{proposition}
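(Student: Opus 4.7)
The plan is to prove the equivalence by handling each direction separately. The bulk of the work lies in $(2) \Rightarrow (1)$, where one must show that the absence of an imaginary quadratic subfield forces every CM-type to be primitive; the reverse implication is essentially immediate from the definition of primitivity.

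First I would dispose of $(2) \Rightarrow (1)$ by contrapositive. Suppose $(K,\varphi)$ is imprimitive, so that $\varphi$ is induced from a CM-type on some proper CM-subfield $K_0 \subsetneq K$. Because $K_0$ is a CM-field its degree over $\Q$ is even, and properness rules out $[K_0:\Q]=4$, leaving $[K_0:\Q]=2$. Thus $K_0$ is an imaginary quadratic subfield of $K$, contradicting $(2)$.

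The substantive direction is $(1) \Rightarrow (2)$, again by contrapositive: assume $K$ contains an imaginary quadratic subfield $K_1$, and show every CM-type of $K$ is induced. The first step is to note that $K_1 \cap K^+ = \Q$, since $K_1$ is totally imaginary while $K^+$ is totally real, so the compositum $K_1 K^+$ has degree $4$ and must equal $K$. This forces $K/\Q$ to be Galois with $\Gal(K/\Q)\cong C_2\times C_2$. Writing $\Gal(K/\Q)=\{1, \tau, \rho, \rho\tau\}$ with $\rho$ complex conjugation and $\tau$ a generator of $\Gal(K/K_1)$, I would next verify that the third quadratic subfield $K_2 := K^{\langle \rho\tau\rangle}$ is also imaginary: otherwise $\rho$ would act trivially on $K_2$, forcing $\rho \in \{1, \rho\tau\}$ and hence $\tau = 1$, a contradiction.

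The final step is to examine the CM-types directly. The pairs of complex-conjugate embeddings are $\{1, \rho\}$ and $\{\tau, \rho\tau\}$, so up to conjugation the CM-types are $\varphi_A = \{1, \tau\}$ and $\varphi_B = \{1, \rho\tau\}$. Since $1$ and $\tau$ restrict to the same embedding of $K_1$, the type $\varphi_A$ is induced from $K_1$; since $1$ and $\rho\tau$ restrict to the same embedding of $K_2$, the type $\varphi_B$ is induced from $K_2$. Both CM-types are therefore imprimitive. The only mildly subtle point, which one must not overlook, is that $K$ contains two imaginary quadratic subfields as soon as it contains one, and one must use the second subfield $K_2$ to exhibit $\varphi_B$ as induced.
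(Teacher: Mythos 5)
Your proof is correct and follows essentially the same route as the paper: both reduce the substantive direction to showing that an imaginary quadratic subfield forces $\Gal(K/\Q)\cong C_2\times C_2$, and that each of the two CM-types $\{1,\tau\}$ and $\{1,\rho\tau\}$ is imprimitive because it is stabilized by (equivalently, induced from the fixed field of) $\langle\tau\rangle$, respectively $\langle\rho\tau\rangle$. The only difference is cosmetic: where the paper cites Shimura's classification and the stabilizer criterion of Proposition~\ref{prop: Lang}, you derive the group structure directly from $K=K_1K^+$ and exhibit the inducing subfields $K_1$ and $K_2$ explicitly.
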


\begin{proof}
 We recall the argument from Example 8.4.(2) of \cite{Shimura} in
 which we find a classification of the possible Galois groups of quartic 
 CM-fields $K$  together with the possible CM-types.
It follows from this classification that if $K$ contains a proper
 CM-subfield $K_1\neq \Q$ then $K/\Q$ is Galois with Galois group
 $G\simeq C_2\times C_2$. Moreover, in this case all CM-types are
 imprimitive. Namely, denoting again complex conjugation by $\rho$, we may
 write $G=\{1, \rho, \sigma, \rho\sigma\}$. Then the possible CM-types
 are $\{1, \sigma\}$ and $\{1, \rho\sigma\}$, which are fixed by
 $\langle\sigma\rangle$ and $\langle \rho\sigma\rangle$,
 respectively. Therefore, the statement follows from Proposition \ref{prop:
   Lang}.
\end{proof}

Proposition \ref{prop:g=2} explains why Goren and Lauter (\cite{GorenLauter06},
\cite{GorenLauter07}) restrict to the case where the quartic CM-field does not
 contain an imaginary quadratic subfield. For quartic CM-fields, this
 is equivalent to requiring that the CM-type is primitive. However, as
 we have seen in our discussion of the primitive types in Cases 1 and
 2 of Proposition \ref{prop:classification}, these two properties are
 not equivalent for sextic CM-fields.

We give two concrete examples of genus $2$ curves with CM to
illustrate Proposition~\ref{prop:g=2}.  These are similar to the
genus $3$ examples given in Section \ref{sec:cyclicexa}. We consider
two smooth projective curves defined by the following affine equations
\[
\begin{split}
D_1:&\qquad   y^5=x(x-1),\\
D_2:&\qquad   y^8=x(x-1)^4.
\end{split}
\]
One easily verifies that both curves have
genus $2$.

The curve $D_1$ has CM by $K_1:=\Q(\zeta_5)$ with CM-type $(1,2)$ in
the notation of Section \ref{sec:cyclicexa}.  The Galois group of
$K_1/\Q$ is cyclic of order $4$, hence its unique subgroup of order
$2$ is generated by complex conjugation, which cannot fix the
CM-type. Indeed, the Jacobian of $D_1$ is simple. In the genus $2$
case, all CM-types of a cyclic CM-field are primitive. We have already
seen that this does not hold in general for genus $g\geq 3$.

The curve $D_2$ has CM by $K_2:=\Q(\zeta_8)$. The corresponding
Galois group is isomorphic to $C_2\times C_2$, hence the CM-type is
imprimitive. Indeed, the CM-type is $(1,3)$ which is fixed by $\langle
3\rangle\subset (\Z/8\Z)^\ast.$ The CM-type $(1,3)$ is induced from the CM-type of the
elliptic curve $E:=D_2/\langle\tau\rangle$, where $\tau(x,y)=(1/x,
y^3/x(x-1))$ is an automorphism of order $4$.

\section{Reduction of CM-curves and their Jacobians} \label{sec:jacobian3}

Our main result (Theorem \ref{thm:bnd_on_p}) deals with curves $C$ of
genus $3$ defined over some number field whose Jacobians have CM by a
sextic CM-field $K$.  In this section, we describe the
possibilities for the reduction of these curves and their Jacobians to
characteristic $p>0$.

\subsection{The theorem of Serre--Tate}

  Let $C$ be a curve of
genus $g\geq 2$ defined over a number field $M$, and let $J:=\Jac(C)$
be its Jacobian. In the course of our arguments, we allow ourselves to
replace $M$ by a finite extension,
 which we still denote by $M$.  Let
$\nu$ be a finite place of $M$. We write ${\mathcal O}_\nu$ for the
valuation ring of $\nu$ and $k_\nu$ for its residue field.  We write
$\overline{k_{\nu}}$ for an algebraic closure of $k_\nu$.

Recall that the abelian variety $J$ has {\em good reduction} at $\nu$
if there exists an abelian scheme ${\mathcal J}$ over ${\mathcal
  O}_\nu$ with ${\mathcal J}\otimes_{{\mathcal O}_\nu} M\simeq J$.
This implies that the reduction $\oJ:={\mathcal
  J}\otimes_{{\mathcal O}_\nu} \overline{k_\nu}$ is an abelian
variety.  We say that $J$ has {\em potentially good reduction} at
$\nu$ if there exists a finite extension $M'/M$ and an extension
$\nu'$ of $\nu$ such that $J\otimes_M M'$ has good reduction at
$\nu'$.

The following theorem is Theorem 6 of \cite{SerreTate}.

\begin{theorem}(Serre--Tate)\label{thm:SerreTate}
Let $J$ be an abelian variety with CM defined over a number field
$M$. Let $\nu$ be a finite place of $M$.  Then $J$ has potentially
good reduction at $\nu$.
\end{theorem}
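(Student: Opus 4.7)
The plan is to apply the Néron--Ogg--Shafarevich criterion. Fix a rational prime $\ell$ different from the residue characteristic of $\nu$. That criterion says $J$ has good reduction at $\nu$ if and only if the inertia subgroup $I_\nu \subset \Gal(\overline{M}/M)$ acts trivially on the $\ell$-adic Tate module $T_\ell J$. Replacing $M$ by a finite extension $M'$ replaces $I_\nu$ by an open subgroup of finite index inside the inertia at any place of $M'$ over $\nu$, so it suffices to show that the image of $I_\nu$ under the $\ell$-adic representation
\[
\rho_\ell\colon \Gal(\overline{M}/M) \longrightarrow \Aut(T_\ell J)
\]
is finite; one can then take $M'$ to be the finite extension cut out by this action.

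The first step is to exploit the CM structure to force $\rho_\ell$ to be abelian. After replacing $M$ by a finite extension we may assume every element of $\End^0(J)$ is defined over $M$, giving an $M$-linear action of the CM-field $K$ on $J$. This action commutes with $\rho_\ell$ and makes $V_\ell J := T_\ell J \otimes_{\Z_\ell} \Q_\ell$ a faithful module over the $\Q_\ell$-algebra $K \otimes_\Q \Q_\ell$. Because $\dim_{\Q_\ell} V_\ell J = 2\dim J = [K:\Q] = \dim_{\Q_\ell}(K\otimes_\Q \Q_\ell)$ and the action is faithful, $V_\ell J$ is free of rank one over $K \otimes_\Q \Q_\ell$: each simple factor of the semisimple algebra $K \otimes_\Q \Q_\ell$ must appear with positive multiplicity, and the dimension count forces multiplicity exactly one. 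Consequently $\rho_\ell$ factors through $(K \otimes_\Q \Q_\ell)^\times$, so its restriction to the decomposition group at $\nu$ has abelian image contained in the compact $\ell$-adic Lie group $(\calO_K \otimes \Z_\ell)^\times$.

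The main technical step, and the hardest part, is then to show that such an abelian local representation has finite image on inertia. Decompose via the tame--wild filtration $1 \to P_\nu \to I_\nu \to I_\nu^{\mathrm{t}} \to 1$, with $P_\nu$ wild (pro-$\pi$ for $\pi$ the residue characteristic) and $I_\nu^{\mathrm{t}}$ tame. Since $\pi \neq \ell$, any continuous homomorphism from the pro-$\pi$ group $P_\nu$ into the $\ell$-adic Lie group $(\calO_K \otimes \Z_\ell)^\times$ has finite image: the target has an open pro-$\ell$ subgroup, whose preimage in $P_\nu$ is open of finite index and maps trivially. For the tame part, the Frobenius lift $\phi$ and any $\tau \in I_\nu^{\mathrm{t}}$ satisfy $\phi\tau\phi^{-1} = \tau^{q}$ with $q$ the size of the residue field; combining this with the commutativity of the image of $\rho_\ell$ yields $\rho_\ell(\tau) = \rho_\ell(\tau)^{q}$, so $\rho_\ell(\tau)^{q-1} = 1$. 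Hence the tame image lies in the torsion subgroup of $(\calO_K \otimes \Z_\ell)^\times$, which is finite. Together these two bounds give the required finiteness of $\rho_\ell(I_\nu)$, and passing to the finite extension killing this action completes the proof.
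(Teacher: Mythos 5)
Your argument is correct and is essentially the proof of Theorem~6 of Serre--Tate, which is all the paper itself does here (it cites that result without reproving it): reduce via the N\'eron--Ogg--Shafarevich criterion to finiteness of the inertia image, use that $V_\ell J$ is free of rank one over $K\otimes_{\Q}\Q_\ell$ to make the representation abelian, and kill the wild and tame parts separately. The only point to tidy is that the relation $\phi\tau\phi^{-1}=\tau^{q}$ holds only in the tame quotient, so for $\tau\in I_\nu$ you get $\rho_\ell(\tau)^{q-1}\in\rho_\ell(P_\nu)$ rather than $=1$; since you have already shown $\rho_\ell(P_\nu)$ is finite, this still bounds the order of every element of the abelian group $\rho_\ell(I_\nu)$, and a bounded-exponent subgroup of $(\calO_K\otimes\Z_\ell)^\times$ is finite, so the conclusion stands.
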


 Since there
  are at most finitely places where $J$ does not have good reduction,
  there exists a finite extension of $M$ over which $J$ has good
  reduction everywhere.

\subsection{Reduction of genus $3$ curves with CM}\label{sec:curvered}

We now describe the restrictions imposed by Theorem \ref{thm:SerreTate}
on the reduction of the curve $C$. 

 Recall that $C$ is a curve of genus $g(C)\geq 2$ defined over a
 number field $M$. We say that $C$ has {\em good reduction} at a
 finite place $\nu$ of $M$ if there exists a model $\mathcal{C}$ over
 ${\mathcal O}_\nu$ with $\mathcal{C}\otimes_{{\mathcal O}_\nu}
 M\simeq C$ such that the reduction
 $\oC:=\mathcal{C}\otimes_{{\mathcal O}_\nu} \overline{k_\nu}$ is
 smooth. Similarly, $C$ has {\em potentially good reduction} at $\nu$
 if it has good reduction over a finite extension of $M$.

We say that $C$ has {\em semistable reduction} at $\nu$ if there
exists a model $\mathcal{C}$ over ${\mathcal O}_\nu$ with
$\mathcal{C}\otimes_{{\mathcal O}_\nu} M\simeq C$ such that the
reduction $\oC$ is semistable. This means that $\oC$ is reduced and
has at most ordinary double points as singularities. The corresponding
model ${\mathcal C}={\mathcal C}_\nu$ is called a {\em semistable
  model} of $C$ at $\nu$.  The Stable Reduction Theorem
(\cite{DeligneMumford}, Corollary 2.7) states that every curve $C$
admits a semistable model at $\nu$ after replacing $M$ by a finite
extension. Since we assume that $g(C)\geq 2$, there exists a unique
minimal semistable model, which is called the {\em stable model} at $\nu$. Its
special fiber $\oC$ is called the {\em stable reduction} of $C$ at
$\nu$. The minimality of the stable model implies that $C$ has
potentially good reduction if and only if the stable reduction $\oC$
is smooth. If the finite place $\nu$ is fixed, we usually omit it.

We now turn to our situation of interest, namely that of a genus $3$
curve whose Jacobian has CM by a sextic CM-field. The following
proposition is a consequence of Theorem \ref{thm:SerreTate}.

 We say that $C$ has {\sl bad reduction} at $\nu$ if it does not have
potentially good reduction at $\nu$.  This is equivalent to the stable
reduction $\oC$ having singularities. We say that the reduction
$\bar{C}$ of $C$ is {\em tree-like} if the intersection graph of the
irreducible components of $\oC$ is a tree.
Note that we always consider the reduction $\oJ$ (resp.\ $\oC$) as an
abelian variety (resp.\ curve) defined over the algebraically closed
field $\overline{k_\nu}$ for convenience.

\begin{proposition}\label{prop:SerreTate}
 Let $C$ be a curve of genus $3$ defined over a number field $M$ such
 that its Jacobian $J=\Jac(C)$ has CM.
Let $\nu$ be place of $M$ where $C$ has bad reduction.
Then
\begin{itemize}
\item[(a)] the stable reduction reduction $\oC$ of $C$ is tree-like,\\ and
\item[(b)] the reduction $\oJ$ of $J$ is the product of the Jacobians
  of the irreducible components of $\oC$ (as polarized abelian varieties).
\end{itemize}
\end{proposition}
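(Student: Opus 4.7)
My plan is to leverage the Serre--Tate theorem (Theorem \ref{thm:SerreTate}) together with the structure theory of the identity component of the N\'eron model of the Jacobian of a semistable curve. First, after replacing $M$ by a finite extension, I may assume that both $C$ has semistable reduction at $\nu$ and $J = \Jac(C)$ has good reduction at $\nu$; the latter is guaranteed by Theorem \ref{thm:SerreTate}, and enlarging $M$ preserves the hypotheses. Let $\mathcal{J}$ denote the N\'eron model of $J$ over $\cO_\nu$; under good reduction, $\mathcal{J}$ is an abelian scheme, so its special fiber $\oJ$ is an abelian variety.

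For part (a), I will invoke the classical description (due to Raynaud, see also SGA 7) of the identity component of the N\'eron model of the Jacobian of a semistable curve. If $\oC$ has irreducible components $D_1, \ldots, D_r$ with normalizations $\Dtilde_i$, then there is a canonical short exact sequence
\[
0 \longrightarrow T \longrightarrow \Pic^0(\oC) \longrightarrow \prod_{i=1}^{r} \Jac(\Dtilde_i) \longrightarrow 0,
\]
where $T$ is a torus whose character group is the first homology of the dual graph $\Gamma$ of $\oC$; moreover $\Pic^0(\oC)$ is canonically identified with the identity component $\oJ^0$ of the reduction of the N\'eron model. Since $J$ has good reduction, $\oJ = \oJ^0$ is an abelian variety, so the toric part $T$ must be trivial. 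Equivalently, $H_1(\Gamma, \Z) = 0$, which is exactly the statement that $\Gamma$ is a tree, proving (a).

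For part (b), the same exact sequence with $T = 0$ yields an isomorphism $\oJ \simeq \prod_i \Jac(\Dtilde_i)$ as abelian varieties. It remains to check that the canonical principal polarization on $\oJ$ (obtained by specializing the autoduality polarization of $J$) matches the product of the canonical principal polarizations on the $\Jac(\Dtilde_i)$. I will argue this via specialization of the theta divisor: the theta divisor $\Theta \subset J$ extends to a relative Cartier divisor on $\mathcal{J}$, and its restriction to $\oJ$ can be computed from the Abel--Jacobi description applied to the semistable curve $\oC$. For a tree-like semistable curve, every effective divisor class of degree $g - 1$ decomposes according to the components, and the specialized theta divisor becomes $\sum_i \pi_i^{\ast}\Theta_i$, where $\pi_i : \prod_j \Jac(\Dtilde_j) \to \Jac(\Dtilde_i)$ is the projection; this is precisely the product principal polarization. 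The main obstacle is this last compatibility of polarizations under specialization, since identifying the abelian varieties is purely a statement about the connected component while identifying the polarizations requires a careful analysis of how the theta divisor degenerates; I expect to handle it by citing standard results on compactified Jacobians of stable curves (e.g. \cite{DeligneMumford}) rather than reproving it from scratch.
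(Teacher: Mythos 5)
Your proposal is correct and follows essentially the same route as the paper: both arguments pass to an extension where $C$ is semistable and $J$ has good reduction (via Serre--Tate), identify $\oJ$ with $\Pic^0(\oC)$, and use the exact sequence $0\to T\to \Pic^0(\oC)\to \prod_i \Jac(\widetilde{C}_i)\to 0$ to force the toric part to vanish, whence the dual graph is a tree. The only difference is cosmetic: you cite Raynaud/SGA~7 where the paper cites \cite{BLR}, and you spell out the compatibility of polarizations in part (b) via degeneration of the theta divisor, a point the paper's proof leaves implicit with ``both statements follow from this.''
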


\begin{proof}
Let $\nu$ be a finite place of $M$. After replacing $M$ by a finite
extension and choosing an extension of $\nu$, we may assume that $C$
has stable reduction at $\nu$.  Let $\mathcal{C}$ be the stable model
of $C$.  Set $S=\Spec({\mathcal O}_\nu)$, and define $\Pic^0(\mathcal C/S)$
to be the identity component of the Picard variety. Since the stable reduction
$\oC$ of $C$ is reduced, Theorem 1 in Section 9.5 of \cite{BLR} states
that $\Pic^0({\mathcal C}/S)$ is a N\'eron model of $J$.

Theorem \ref{thm:SerreTate} implies that $J$ has potentially good
reduction, i.e.,  there exists an abelian variety ${\mathcal J}$ over
$S$ with generic fiber $J$. Proposition 8 of Section 1.2 in \cite{BLR}
shows that ${\mathcal J}/S$ is a N\'eron model. Since two different
N\'eron models are canonically isomorphic, it follows that
$\Pic^0({\mathcal C}/S)\simeq_S {\mathcal J}$. In particular, it
follows that the special fiber $\Pic^0({\mathcal
  C}/S)\otimes_{{\mathcal O}_\nu} \overline{k_\nu}\simeq \Pic^0(\oC)$ is an abelian
variety.

Example 8 of Section 9.2 in \cite{BLR} shows that $\Pic^0(\oC)$ is
given by an exact sequence
\begin{equation}
\label{eq:extension}
1\to T\to \Pic^0(\oC)\to B:=\prod_i \Jac(\widetilde{C}_i)\to 1,
\end{equation}
 where $B$ is an abelian variety and $T$ is a torus.  The product
on the right-hand side is taken over the irreducible components of
$\oC$. We denote the normalization of an irreducible component ${C}_i$
of $\oC$  by $\widetilde{C}_i$.
The torus $T$ satisfies
\[
T\simeq \G^t_{m, {\overline{k_\nu}}}
\]
for some $t\geq 0$.  The torus $\G_m$ is not compact, and hence not an
abelian variety. Since $\Pic^0(\oC)$ is an abelian variety, the exact
sequence \eqref{eq:extension} implies that $t=0$, i.e., $\Pic^0(\oC)$
contains no torus.  By Corollary 12.b of \cite{BLR}, this means that
the intersection graph of the irreducible components of $\oC$ is a
tree.
 Both statements of the proposition follow from this.
\end{proof}

The corollary below follows immediately  
from Proposition
\ref{prop:SerreTate}.  In Section \ref{sec:exa}, we give examples of
each of the cases.

\begin{cor}\label{cor:SerreTate} Let $C$ be a genus $3$ curve 
with CM defined over a number field $M$, and let $\nu$ be a
finite place of $M$.  One of the following three possibilities holds
for the irreducible components of $\oC$ of positive genus:
\begin{itemize}
\item[(i)] (good reduction) $\oC$ is a smooth curve of genus $3$,
\item[(ii)] $\oC$ has three irreducible components of genus $1$,
\item[(iii)] $\oC$ has an irreducible component of genus $1$ and one
  of genus $2$.
\end{itemize}
\end{cor}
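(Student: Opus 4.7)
The plan is to apply Proposition \ref{prop:SerreTate} directly, together with a short combinatorial analysis of the possible configurations of positive-genus components in a tree-like semistable curve of arithmetic genus $3$.

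First I would split on whether $C$ has good or bad reduction at $\nu$. If $C$ has good reduction, then by definition the stable reduction $\oC$ is a smooth curve of genus $3$, giving case (i). Otherwise, Proposition \ref{prop:SerreTate} applies: part (a) says that $\oC$ is tree-like, and part (b) says that the reduction $\oJ$ of the Jacobian decomposes as the product $\prod_i \Jac(\widetilde{C}_i)$ over the normalizations of the irreducible components of $\oC$. By Theorem \ref{thm:SerreTate}, $J$ has potentially good reduction, so $\oJ$ is an abelian variety of dimension $\dim J = 3$. Comparing dimensions in the product decomposition yields $\sum_i g(\widetilde{C}_i) = 3$, so the positive-genus components of $\oC$ have genera forming a partition of $3$.

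Second, I would enumerate the partitions of $3$: these are $(3)$, $(2,1)$, and $(1,1,1)$. The partitions $(2,1)$ and $(1,1,1)$ correspond exactly to cases (iii) and (ii) of the corollary. It remains only to exclude the partition $(3)$ under the hypothesis of bad reduction, which is the one place where the argument is not purely numerical. If there were a unique positive-genus component $C_0$ of genus $3$, then any other irreducible components of $\oC$ would be rational; since the dual graph is a tree, the outermost such rational component would be a leaf meeting the rest of $\oC$ in exactly one node, violating the stability requirement that every rational component of a stable curve carry at least three special points. Hence $\oC = C_0$, and since $\oC$ is tree-like no self-intersections occur, so $C_0$ is smooth of genus $3$, contradicting the bad reduction hypothesis.

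The main (and only) subtlety is this last combinatorial step; the rest is a dimension count against Proposition \ref{prop:SerreTate}(b).
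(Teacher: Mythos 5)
Your proposal is correct and follows essentially the same route as the paper, which simply asserts that the corollary is immediate from Proposition~\ref{prop:SerreTate}; you have unpacked that step via the dimension count $\sum_i g(\widetilde{C}_i)=3$ from part (b) and the enumeration of partitions of $3$. Your extra care in ruling out the partition $(3)$ under bad reduction (a leaf rational component in the dual tree would carry only one special point, violating stability, and an irreducible tree-like curve has no nodes) is a correct and worthwhile elaboration of a point the paper leaves implicit.
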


Note that the stable reduction $\oC$ may contain irreducible
components of genus $0$. This happens for the stable
reduction $\oC_1$ to characteristic $3$ of the curve $C_1$ from
Lemma \ref{lem: C1}, for example.  One may show that $\oC_1$ has four irreducible
components: one of genus $0$ and three of genus $1$. The three
elliptic curves each intersect the genus $0$ curve in one point but do
not intersect each other.  Since the irreducible components of genus $0$
do not contribute to the Jacobian, we have not listed them in
Corollary \ref{cor:SerreTate}.

\begin{remark}\label{rem:SerreTate}
Let $C$ be a curve of genus $3$ with CM, defined over a number field $M$. Suppose that $C$ has bad reduction at a finite place $\nu$ of $M$.
In Case (ii) of Corollary~\ref{cor:SerreTate}, the reduction $\oC$ of
$C$ contains three irreducible components $E_i$ of genus
$1$. Proposition~\ref{prop:SerreTate} implies that
\[
\oJ\simeq E_1\times E_2\times E_3
\]
as polarized abelian varieties, i.e., the polarization on $\oJ$ is the
product polarization.

In Case (iii) of Corollary \ref{cor:SerreTate}, $\oC$ contains an
irreducible component $E$ of genus $1$ and an irreducible component
$D$ of genus $2$.  In this case, we have
\[
\oJ\simeq E\times \Jac(D)
\]
and the polarization on $\oJ$ is induced by $E\times\{0\} +\{0\}\times
D\hookrightarrow \oJ$. We  show below that in this case $\oJ$ is
still isogenous to a product of elliptic curves (Theorem \ref{thm:
  Lang2}). However, it is {\bf not} true that the polarization of
$\oJ$ is induced by polarization on the three elliptic curves as we
had in Case (ii).

Even in the case where $C$ has good reduction (Case (i) of
Corollary \ref{cor:SerreTate}), the reduction $\oJ$ of the Jacobian
need not be simple even if $J$ is. In this case, the polarization of
$\oJ$ is induced by the embedding of $\oC$ in its
Jacobian and hence is not a product polarization.
\end{remark}

 The following theorem  is  a generalization of Theorem \ref{thm:
  primsimple} to positive characteristic. 

\begin{theorem}\label{thm: Lang2}
Let $J$ be an abelian variety of dimension $3$ with CM,
defined over a number field $M$.  Suppose that the reduction $\oJ$ of
$J$ at a finite place of $M$ is not simple.  Then $\oJ$ is isogenous to the
product of three copies of the same elliptic curve $E$.
\end{theorem}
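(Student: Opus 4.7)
The plan is to use the CM structure to control the isotypic decomposition of $\oJ$ by way of Honda--Tate, and then let the dimension constraint force $\oJ\sim E^3$. After replacing $M$ by a finite extension, Theorem~\ref{thm:SerreTate} ensures $J$ has good reduction at the given place, so $\oJ$ is an abelian variety of dimension three; after enlarging the residue field to a finite field $k=\mathbb F_q$ we may further assume that $\oJ$ together with all of its geometric endomorphisms is already defined over $k$. The CM embedding $K\subset\End^0(J)$ reduces to an embedding $K\subset\End^0(\oJ)$. For each prime $\ell\neq p$, the canonical specialization isomorphism $V_\ell(J)\simeq V_\ell(\oJ)$ is $K$-equivariant, and the standard semisimplicity argument in characteristic zero shows that $V_\ell(J)$ (hence also $V_\ell(\oJ)$) is free of rank one over $K\otimes_{\Q}\Q_\ell$.

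Next I would write the isotypic decomposition $\oJ\sim\prod_{i=1}^r A_i^{n_i}$ with the $A_i$ pairwise non-isogenous simple, so that $\End^0(\oJ)=\prod_i\Mat_{n_i}(D_i)$ where $D_i=\End^0(A_i)$. The idempotents projecting onto the factors of this product are central in $\End^0(\oJ)$, so they commute with $K$ and the decomposition $V_\ell(\oJ)=\bigoplus_i V_\ell(A_i)^{n_i}$ is one of $K\otimes\Q_\ell$-modules. The $q$-Frobenius $\pi\in\End(\oJ)$ acts on each $A_i^{n_i}$ as the scalar $\pi_{A_i}\cdot I_{n_i}$ with $\pi_{A_i}\in Z(D_i)$ by Honda--Tate, so $\pi$ lies in $Z(\End^0(\oJ))=\prod_i Z(D_i)$; in particular $\pi$ commutes with $K$. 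Since $V_\ell(\oJ)$ is the regular representation of $K\otimes\Q_\ell$, the commutant of $K\otimes\Q_\ell$ inside $\End_{\Q_\ell}(V_\ell(\oJ))$ equals $K\otimes\Q_\ell$, and combined with the faithfulness of $\End^0(\oJ)\otimes\Q_\ell\hookrightarrow\End_{\Q_\ell}(V_\ell(\oJ))$ (Tate's theorem on homomorphisms), a dimension comparison forces the commutant of $K$ in $\End^0(\oJ)$ to be $K$ itself. Therefore $\pi\in K$.

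Finally, set $K':=\Q(\pi)\subseteq K$ and let $m_\pi$ denote the $\Q$-minimal polynomial of $\pi$, irreducible of degree $[K':\Q]$. Freeness of $V_\ell(\oJ)$ over $K\otimes\Q_\ell$ shows that multiplication by $\pi$ has characteristic polynomial $m_\pi(t)^{[K:K']}$; on the other hand the isotypic decomposition computes the same polynomial as $\prod_i h_i(t)^{n_i}$, where $h_i(t)=m_{\pi_{A_i}}(t)^{e_i}$ by Honda--Tate, with $m_{\pi_{A_i}}$ irreducible and $e_i$ the Schur index of $D_i$ over its center. Equating the two expressions and using the irreducibility of $m_\pi$ forces $m_{\pi_{A_i}}=m_\pi$ for every $i$, so the Weil numbers $\pi_{A_i}$ are all $\Q$-conjugate to $\pi$, and the Honda--Tate classification then puts the $A_i$ in a single isogeny class, contradicting pairwise non-isogeny unless $r=1$. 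Consequently $\oJ\sim A^n$ with $A$ simple and $n\dim A=3$, and the hypothesis that $\oJ$ is not simple excludes $(n,\dim A)=(1,3)$, leaving $\oJ\sim E^3$ for the elliptic curve $E:=A$. The main technical step is the identification $\pi\in K$: freeness of $V_\ell(\oJ)$ over $K\otimes\Q_\ell$ alone does not yield this, and Tate's theorem is needed to transport the commutant computation from $\End_{\Q_\ell}(V_\ell(\oJ))$ down to $\End^0(\oJ)$.
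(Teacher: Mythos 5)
Your argument is correct, but it takes a genuinely different and substantially heavier route than the paper's. The paper's proof is a short algebraic computation: decompose $\oJ$ into isotypic components $\prod_i A_i^{n_i}$; since $\oJ$ is not simple and has dimension $3$, some simple factor $A_j=E$ is an elliptic curve; project the embedding $K\hookrightarrow \prod_i M_{n_i}(\End^0(A_i))$ onto the $j$th factor (injective because $K$ is a field) to get $K\hookrightarrow M_{n_j}(\End^0(E))$; and finally observe that any commutative subfield of $M_{n}(\End^0(E))$ has degree at most $2n$ over $\Q$ (whether $\End^0(E)$ is imaginary quadratic or a quaternion algebra), so $6\leq 2n_j$ forces $n_j=3$ and hence $\oJ\sim E^3$. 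You instead descend to a finite field, use freeness of $V_\ell(\oJ)$ over $K\otimes\Q_\ell$ to compute that $K$ is its own centralizer in $\End^0(\oJ)$, deduce $\pi\in K$, and then invoke Honda--Tate to match up the Weil numbers of all isotypic factors. This works, and it essentially reproves in this special case the more general statement (Theorem 1.3.1.1 of Chai--Conrad--Oort) that the paper cites before giving its direct argument; what it buys is the stronger intermediate conclusion that $\oJ$ is isotypic without first having to locate an elliptic factor, and the fact that $\Q(\pi)\subseteq K$. What it costs is the full Honda--Tate machinery and the reduction to a finite field, neither of which the paper needs. One small correction: the only input you need from the $\ell$-adic side is the \emph{injectivity} of $\End^0(\oJ)\otimes\Q_\ell\to\End_{\Q_\ell}(V_\ell(\oJ))$, which is an elementary fact about torsion points; Tate's isogeny theorem (surjectivity onto Galois-equivariant endomorphisms) is not actually required for your centralizer computation.
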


\begin{proof}
The result is essentially a special case of Theorem 1.3.1.1
of \cite{ChaiConradOort}. For the convenience of the reader we sketch
a direct proof in our situation.

By assumption, $J$ has CM by the sextic CM-field $K$. This implies that
we have an embedding
\[
 K\hookrightarrow \End^0(\oJ).
\]
Decompose $\oJ$ into isotypic components: $\oJ \sim \prod_{i}
{A_i^{n_i}}$ where the $A_i$ are simple and $A_i\not\sim A_j$ for
$i\neq j$. Since $\oJ$ is not simple by assumption, for dimension reasons there exists $j$ such that
$A_j=E$ is an elliptic curve. We have
$K\hookrightarrow \End^0(\oJ)=\prod_{i}
{M_{n_i}(\End^0(A_i))}$. Projecting this ring homomorphism on the
$j$th factor gives an injection $K\hookrightarrow
M_{n_j}(\End^0(E))$. A dimension argument shows that $n_j=3$ and
therefore $\oJ\sim E^3$.
\end{proof}

\begin{proposition}\label{prop: K_1 in K}
Let $C$ be a genus $3$ curve with CM, defined over a number field $M$.
Suppose that $C$ has bad reduction at a finite place $\nu$
of $M$. Then  the reduction $\oJ$ of the Jacobian $J$ of $C$ is
supersingular or $K$ contains an imaginary quadratic field $K_1$.
\end{proposition}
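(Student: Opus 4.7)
My plan is to combine Corollary~\ref{cor:SerreTate} and Theorem~\ref{thm: Lang2} with a short dimension count inside $M_3(K_1)$.

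First, since $C$ has bad reduction at $\nu$, Corollary~\ref{cor:SerreTate} places us in case (ii) or (iii). Either way, Remark~\ref{rem:SerreTate} exhibits $\bar{J}$ as a product containing a one-dimensional factor, so $\bar{J}$ is not simple. Theorem~\ref{thm: Lang2} then gives an elliptic curve $E$ over $\overline{k_\nu}$ with $\bar{J} \sim E^{3}$, and consequently
\[
\End^0(\bar{J}) \simeq M_3(\End^0(E)).
\]
After replacing $M$ by a finite extension so that Serre--Tate (Theorem~\ref{thm:SerreTate}) provides good reduction of $J$ at a place above $\nu$, the reduction map on endomorphism rings is injective, and tensoring with $\Q$ yields an inclusion $K \hookrightarrow \End^0(\bar{J})$.

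Since $\overline{k_\nu}$ has positive characteristic, $E$ is either supersingular or ordinary. In the supersingular case, $\bar{J} \sim E^{3}$ is by definition a supersingular abelian variety, and the first alternative in the proposition holds. In the ordinary case, $\End^0(E) = K_1$ is an imaginary quadratic field, and $\End^0(\bar{J}) \simeq M_3(K_1)$ is a central simple $K_1$-algebra of $\Q$-dimension $18$. My plan is to observe that the subring $K \cdot K_1$ of $M_3(K_1)$ generated by the images of $K$ and $K_1$ is commutative, since $K_1$ lies in the center. Such a commutative subalgebra has $K_1$-dimension at most $3$, hence $\Q$-dimension at most $6$. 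As $[K:\Q] = 6$ already, this forces $K \cdot K_1 = K$, and therefore $K_1 \subseteq K$, yielding the second alternative.

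The proof is short and I do not foresee any real obstacle. The two points that require care are the injectivity of the reduction map on endomorphism rings (which is standard once good reduction has been arranged by enlarging $M$) and the dimension bound $[L:\Q] \leq n \cdot [Z:\Q]$ for commutative $\Q$-subalgebras $L$ of $M_n(D)$ with center $Z$, applied here to $n = 3$ and $Z = K_1$. Together these force the degree-$6$ subfield $K$ to be a maximal commutative subfield of $M_3(K_1)$ and hence to contain the center $K_1$.
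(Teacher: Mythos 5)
Your proof is correct and follows the paper's own argument up to the last step: both first use Corollary \ref{cor:SerreTate} (together with Remark \ref{rem:SerreTate}) to see that $\oJ$ is not simple, invoke Theorem \ref{thm: Lang2} to get $\oJ\sim E^3$, and then split according to whether $E$ is supersingular or ordinary. The difference is in how the ordinary case is closed out. The paper cites Theorem 1.3.1.1 of \cite{ChaiConradOort}, which asserts that $K$ is its own centralizer in $M_3(K_1)$; since the central subfield $K_1$ certainly centralizes $K$, it must lie in $K$. You replace this citation by an elementary dimension count on the commutative subalgebra $K\cdot K_1$, which is self-contained and arguably preferable. One caution about the general lemma you state at the end: a commutative subalgebra of $M_n(F)$ can have $F$-dimension as large as $\lfloor n^2/4\rfloor+1$ (Schur's bound), which exceeds $n$ for $n\ge 4$, so the inequality $[L:\Q]\le n\,[Z:\Q]$ is false for arbitrary commutative subalgebras. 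Your application survives for two independent reasons: for $n=3$ Schur's bound happens to equal $3$; and, more robustly, $K\cdot K_1$ is a quotient of the \'etale algebra $K\otimes_{\Q}K_1$, hence a product of fields each containing a copy of $K$ and so of $\Q$-degree at least $6$, and a faithful action on the six-dimensional $\Q$-vector space $K_1^3$ then forces a single factor of degree exactly $6$, i.e.\ $K\cdot K_1=K$ and $K_1\subseteq K$. Finally, a small point: the assertion that $\End^0(E)$ is an imaginary quadratic field in the ordinary case uses that $E$ is defined over a finite field (the residue field of $\nu$ is finite); the paper makes this explicit and you should too.
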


\begin{proof}
Let $C$ and $\oJ$ be as in the statement of the proposition.  Since
$C$ has bad reduction at $\nu$, Corollary \ref{cor:SerreTate} shows
that $\oC$ has an irreducible component $E_1$ of genus $1$. It follows
that we may regard $E_1$ as abelian subvariety of $\oJ$. (This is
slightly weaker than the statement in Remark \ref{rem:SerreTate}.)  In
particular, $\oJ$ is not simple.  Theorem \ref{thm: Lang2} implies
therefore that $\oJ$ is isogenous to the product of three copies of an
elliptic curve $E$.  Note that $\oJ$ is supersingular if and only if
$E$ is.

We  assume that $E$ is ordinary. Since $E$ may be defined over a finite
field, it has CM and $K_1:=\End^0(E)$ is an imaginary quadratic field
contained in the center of $\End^0(E^3)=M_3(K_1)$. Since $\oJ$
is isogenous to $E^3$, we obtain an embedding
\[
K=\End^0(J)\hookrightarrow \End^0(\oJ)\simeq \End^0(E^3)=M_3(K_1).
\]
Theorem 1.3.1.1 of \cite{ChaiConradOort} states that $K$ is its own
centralizer in $M_3(K_1)$. Since the center of $M_3(K_1)$ is $K_1$, we
conclude that $K_1$ is contained in $K$ and the result follows.
\end{proof}

The following corollary summarizes the results so far in the
case that the CM-field $K$ does not contain an imaginary quadratic
subfield $K_1$.

\begin{cor}\label{cor:K_1 in K}
Let $C$ be a genus $3$ curve with CM by $K$, defined over a number field $M$. 
Suppose that $K$
does not contain an imaginary quadratic subfield. Then  the
following hold:
\begin{itemize}
\item[(a)] the CM-type $(K, \varphi)$ of $J$ is primitive, and $J$ is
  absolutely simple,
\item[(b)] 
 if $C$ has bad reduction at a finite place $\nu$, then the reduction of $J$ at $\nu$ is
  supersingular.
\end{itemize}
\end{cor}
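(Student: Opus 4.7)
The plan is to assemble this corollary directly from results already established in the excerpt; there is essentially no new content, only a combination of previous statements under the hypothesis that $K$ has no imaginary quadratic subfield.

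For part (a), I would first observe that the hypothesis on $K$ places us precisely in Case 3 of Proposition \ref{prop:classification} (in fact this is exactly how Case 3 was characterized in the proof of that proposition). By Corollary \ref{cor: case3}, every CM-type on such a $K$ is primitive, so in particular the CM-type $(K,\varphi)$ of $J$ is primitive. Since $J$ is defined over a number field $M$, which has characteristic zero, Theorem \ref{thm: primsimple} then immediately gives that $J$ is absolutely simple.

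For part (b), I would apply Proposition \ref{prop: K_1 in K} directly to the curve $C$ at the place $\nu$ of bad reduction. That proposition presents exactly two alternatives: either the reduction $\oJ$ of $J$ at $\nu$ is supersingular, or $K$ contains an imaginary quadratic subfield $K_1$. The second alternative is ruled out by our standing hypothesis, so the first must hold, and $\oJ$ is supersingular, as desired.

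There is no real obstacle here: both parts are one-line consequences of results already proved. The only thing to be careful about is verifying that the hypothesis of Corollary \ref{cor: case3} and Proposition \ref{prop: K_1 in K} are each met, which is immediate since ``$K$ does not contain an imaginary quadratic subfield'' is the hypothesis of both statements. Accordingly, the proof I would write is just two short paragraphs, one citing Corollary \ref{cor: case3} together with Theorem \ref{thm: primsimple} for (a), and one citing Proposition \ref{prop: K_1 in K} for (b).
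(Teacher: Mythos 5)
Your proposal is correct and matches the paper's own proof exactly: part (a) is deduced from Corollary \ref{cor: case3} together with Theorem \ref{thm: primsimple}, and part (b) from Proposition \ref{prop: K_1 in K} by ruling out the alternative that $K$ contains an imaginary quadratic subfield. Nothing further is needed.
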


\begin{proof}
  Part (a) follows from Corollary \ref{cor: case3} and
  Theorem \ref{thm: primsimple}. 
Part (b) follows from Proposition \ref{prop: K_1 in K}.
\end{proof}

\subsection{Polarizations and the Rosati involution}\label{sec:polarization}

In the rest of this section, we recall some results on the Rosati
involution following Sections 20 and 21 of \cite{MAV} and Section 17
of \cite{MilneAV}.  For precise definitions and more details, we refer
to these sources. Let $A$ be an abelian variety and $\lambda:A\to
A^\vee$ be a polarization associated with an ample line bundle
${\mathcal L}$ on $A$. The polarization $\lambda$ is an isogeny and
therefore has an inverse $\frac{1}{\deg \lambda} \lambda^{\vee}=
\lambda^{-1}\in \Hom(A^\vee, A)\otimes_\Z \Q$.

The Rosati
involution on $\End^0(A)=\End(A)\otimes \Q$ is defined by
\[
f\mapsto f^\ast=\lambda^{-1}\circ f^\vee\circ \lambda.
\]
It satisfies
\[
(f+g)^\ast=f^\ast+g^\ast, \quad
(f g)^\ast=g^\ast f^\ast, \quad a^\ast=a
\] 
for $f, g\in \End^0(A)$ and $a\in \Q$.  
In the case where $\lambda$ is a principal polarization, i.e.,
$\deg(\lambda)=1$, the Rosati involution acts as an involution on
$\End(A)$. This is because $\lambda^{-1}$ is in $\Hom(A^\vee, A)$ and not
just in $\Hom(A^\vee, A)\otimes_\Z \Q$. The natural polarization on a
Jacobian is a principal polarization.

The Rosati involution is a positive involution (Theorem 1 of Section
21 in \cite{MAV}). This means that
\[
(f, g)\mapsto \Tr(f\cdot g^\ast), \qquad \End^0(A)\to \Q
\]
defines a positive definite quadratic form on $\End^0(A)$. (We refer
to Section 21 of \cite{MAV} for the precise definition of the trace.)
In the case that $A=E$ is an elliptic curve, we choose the
polarization $\lambda$ defined as
\[
\lambda:E\to \Pic^0(E),\quad P\mapsto [P]-[O].
\]
The corresponding Rosati involution sends an isogeny
 $f$ to its dual isogeny $f^\vee$ and
 $\Tr(f\cdot f^\vee)$ is $\deg(f)$, the degree of the
 endomorphism $f$.

\begin{proposition}\label{prop: Rosati}
Let $A$ be a simple abelian variety defined over a field of
characteristic zero with principal polarization $\lambda$. Assume that
$A$ has CM by a field $K$. Then the Rosati involution associated
with $\lambda$ induces complex conjugation on the CM-field $K$.
\end{proposition}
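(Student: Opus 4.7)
Here is the plan.

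The first step is to pin down $\End^0(A)$. Since $A$ is simple and has CM by $K$ in characteristic zero, we have an embedding $K\hookrightarrow \End^0(A)$ with $[K:\Q]=2\dim A$. Albert's classification of endomorphism algebras of simple abelian varieties in characteristic zero, combined with the standard dimension bound that any commutative subfield of $\End^0(A)$ has $\Q$-dimension at most $2\dim A$, forces the inclusion to be an equality: $\End^0(A)=K$. Consequently the Rosati involution $f\mapsto f^\ast$, a priori a $\Q$-linear anti-involution on $\End^0(A)$, restricts (there is nothing to restrict, actually) to a $\Q$-linear ring involution $\iota:K\to K$. Since $K$ is a field, $\iota$ is either the identity or a field automorphism of order two; let $K_0=K^{\iota}$, so $[K:K_0]\in\{1,2\}$.

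Next I would translate positivity of the Rosati involution into a statement about the number-field trace. Because $V_\ell A$ is free of rank one over $K\otimes_\Q\Q_\ell$, multiplication by $f\in K$ acts on $V_\ell A$ with $\Q_\ell$-trace equal to $\tr_{K/\Q}(f)$; hence the Rosati trace $\Tr$ agrees with $\tr_{K/\Q}$ on $K=\End^0(A)$. Positivity of the Rosati form then reads
\[
\tr_{K/\Q}\bigl(f\,\iota(f)\bigr)>0\qquad\text{for every nonzero } f\in K.
\]
Specialising to $f\in K_0$, where $\iota(f)=f$, yields $\tr_{K/\Q}(f^2)=2\,\tr_{K_0/\Q}(f^2)>0$ for all nonzero $f\in K_0$. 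In other words, the quadratic form $x\mapsto\tr_{K_0/\Q}(x^2)$ is positive definite on $K_0$.

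Now I would compute the signature of this trace form on any number field $F$: after extension of scalars $F\otimes_\Q\R\simeq\R^{r_1}\times\C^{r_2}$, each real place contributes signature $(1,0)$ (from $x\mapsto x^2$) and each complex place contributes signature $(1,1)$ (since $\re(z^2)=x^2-y^2$ on $\C\simeq\R^2$). Thus the trace form has signature $(r_1+r_2,\,r_2)$, and it is positive definite if and only if $r_2=0$, i.e.\ $F$ is totally real. Applied to $F=K_0$, this forces $K_0$ to be totally real. Since $K$ is a CM-field, $K$ itself is not totally real, so $K_0\ne K$ and hence $[K:K_0]=2$. Finally, the maximal totally real subfield of $K$ is unique: two distinct totally real subfields of index two would have compositum equal to $K$, contradicting total reality of $K$. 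Therefore $K_0=K^+$ and $\iota$ is the nontrivial element of $\Gal(K/K^+)$, which is complex conjugation.

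The main obstacle is the identification of the Rosati trace with the number-field trace $\tr_{K/\Q}$; once that bridge is in place, the rest is a clean signature computation together with the uniqueness of $K^+$ inside $K$. The underlying algebraic fact being used is the general principle that a positive involution on a number field must be complex conjugation on some CM subfield (and trivial on totally real fields), which in our setting is forced into the shape stated by the constraint $[K:\Q]=2\dim A$ and simplicity of $A$.
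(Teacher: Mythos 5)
Your argument is correct, and it is genuinely more than what the paper writes down: the paper's proof consists of the single observation that simplicity forces $\End^0(A)=K$, after which it defers to Lemma~1.3.5.4 of Chai--Conrad--Oort, whereas you supply the underlying argument in full. Your route is the standard proof of that cited lemma: after the same reduction to $\End^0(A)=K$, you invoke positivity of the Rosati form, identify the trace appearing there with the field trace $\tr_{K/\Q}$ (via $H_1(A,\Q)$, equivalently $V_\ell A$, being free of rank one over $K$), compute the signature $(r_1+r_2,\,r_2)$ of the form $x\mapsto\tr_{F/\Q}(x^2)$ on a number field $F$ to conclude that the fixed field $K_0=K^{\iota}$ is totally real, and finish with the uniqueness of the index-two totally real subfield of a CM-field. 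All of these steps are sound; what your version buys is self-containedness, at the cost of having to justify the bridge between the Rosati trace and $\tr_{K/\Q}$, which is exactly the point you correctly single out as the main obstacle. Two minor remarks: (1) you do not need Albert's classification for the first step --- simplicity already makes $\End^0(A)$ a division algebra $D$, so $H_1(A,\Q)$ is a free $D$-module and $[D:\Q]$ divides $2\dim A=[K:\Q]$, forcing $D=K$; (2) the identity $\tr_{K/\Q}(f^2)=2\tr_{K_0/\Q}(f^2)$ presupposes $[K:K_0]=2$, which you have not yet ruled out from being $1$ at that point in the argument, but the logic is unaffected since in either case $\tr_{K_0/\Q}(x^2)$ is positive definite on $K_0$, whence $K_0$ is totally real and therefore proper in the totally imaginary field $K$.
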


\begin{proof}
Since $A$ is simple, the endomorphism algebra $\End^0(A)$
equals $K$ and the proposition is proved, for example, in Lemma 1.3.5.4 of
\cite{ChaiConradOort}.
\end{proof}

\begin{remark}\label{rem: Rosati} Let $A$ be a simple abelian variety with 
$\End^0(A)=K$ as in the statement of Proposition \ref{prop:
 Rosati}. Let $M$ be a number field over which $A$ can be defined, and
 let $\mathfrak{p}$ be a prime of $M$ at which $A$ has good
 reduction. Write $\overline{A}$ for the reduction.  We obtain an
 embedding
\[
K\hookrightarrow \End^0(\overline{A}).
\]
The Rosati involution on $\End^0(\overline{A})$ is an extension of the
Rosati involution on $\End^0(A)=K$, which is complex conjugation by
Proposition \ref{prop: Rosati}.
\end{remark}

The following proposition was used in the proofs of~\cite{GorenLauter07}
but not stated there explicitly. 

\begin{proposition}\label{prop: Rosati2}
 Suppose that $A=E^n$ is a product of elliptic
 curves as polarized abelian varieties. Then the Rosati involution acts as
\[
M_n(\End(E))\to
M_n(\End(E)), \qquad 
(f_{i,j})\mapsto (f_{j,i}^\vee).
\]
\end{proposition}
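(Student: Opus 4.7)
The plan is to unwind the definition of the Rosati involution $f^\ast = \lambda^{-1}\circ f^\vee\circ\lambda$ by tracking, separately, how each of the three ingredients behaves for a product of elliptic curves, and then to compose.

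First I would pin down the polarization. Since $A=E^n$ as \emph{polarized} abelian varieties, the polarization $\lambda_A:A\to A^\vee$ is the product polarization; under the canonical identification $(E^n)^\vee\simeq (E^\vee)^n$, it is the ``diagonal'' map $(P_1,\dots,P_n)\mapsto (\lambda_E(P_1),\dots,\lambda_E(P_n))$, where $\lambda_E:E\to E^\vee$ is the natural principal polarization recalled in Section~\ref{sec:polarization}. In matrix terms, $\lambda_A=\operatorname{diag}(\lambda_E,\dots,\lambda_E)$, which is invertible with inverse $\operatorname{diag}(\lambda_E^{-1},\dots,\lambda_E^{-1})$.

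Next I would compute the dual isogeny of a matrix. Writing $f\in\End(E^n)=M_n(\End(E))$ as a matrix $(f_{i,j})$ that acts by the usual rule $f(P_1,\dots,P_n)_i=\sum_j f_{i,j}(P_j)$, functoriality of the dual together with the compatibility of $\vee$ with products and direct sums shows that $f^\vee\in\End((E^\vee)^n)$ is represented by the ``transpose of duals'' matrix $(f^\vee)_{i,j}=(f_{j,i})^\vee$. This is the key formal step; it is routine but is where the transposition enters.

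Finally I would compose the three maps. Because $\lambda_A$ and $\lambda_A^{-1}$ are diagonal, conjugation sends a matrix entry-wise:
\[
(f^\ast)_{i,j}=\lambda_E^{-1}\circ (f^\vee)_{i,j}\circ \lambda_E=\lambda_E^{-1}\circ (f_{j,i})^\vee\circ \lambda_E.
\]
The right-hand side is by definition the Rosati involution of $f_{j,i}\in\End(E)$ with respect to $\lambda_E$. Since $\lambda_E$ is the natural principal polarization of the elliptic curve $E$, the discussion in Section~\ref{sec:polarization} identifies this Rosati involution with the dual isogeny on $\End(E)$, so $\lambda_E^{-1}\circ (f_{j,i})^\vee\circ\lambda_E=(f_{j,i})^\vee$. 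This gives $(f^\ast)_{i,j}=(f_{j,i})^\vee$, as claimed.

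The main obstacle is really bookkeeping rather than mathematics: one must be careful with the canonical isomorphism $(E^n)^\vee\simeq(E^\vee)^n$ and with the fact that taking duals reverses the order of composition, so that $(f^\vee)_{i,j}=(f_{j,i})^\vee$ rather than $(f_{i,j})^\vee$. Everything else follows from the fact that the polarization is a product and the elementary identification of the Rosati involution on $E$ with the dual isogeny.
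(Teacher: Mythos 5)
Your proof is correct and follows essentially the same route as the paper: both identify $(E^n)^\vee$ with $(E^\vee)^n$, observe that the product polarization is the diagonal matrix built from $\lambda_E$, and reduce everything to the fact that dualizing a matrix of homomorphisms transposes it and dualizes the entries. The only cosmetic difference is that the paper verifies this last fact on elementary matrices by computing the pullback action on line bundles, whereas you invoke it as formal functoriality of $\vee$ with respect to sums and compositions; either justification is fine.
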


\begin{proof}
The result is well known to the experts. We sketch the argument. The
proof we present here is a variant of the proof of Proposition
11.28 (ii) of \cite{GeerMoonen}.

Let $A=E^n$ be as in the statement of the lemma, and write $p_i:A\to E$
for the projection on the $i$th coordinate. Then any line bundle
${\mathcal L}$ on $A$ satisfies ${\mathcal L}=p_1^\ast {\mathcal
L}_1\otimes \cdots \otimes p_n^\ast{\mathcal L}_n$ for suitable line
bundles ${\mathcal L}_i$ on $E$.  

Consider the natural map $A^\vee=\Pic^0(A)\to
(\Pic^0(E))^n=(E^\vee)^n$ which sends a line bundle ${\mathcal
L}\in \Pic^0(A)$ to the $n$-tuple $({\mathcal
L}|_{E_i})_i\in (\Pic^0(E))^n$ of the restrictions of ${\mathcal L}$ to
the $i$th copy $E_i:=(\cdots, 0, E, 0, \cdots)$ of $E$. One shows that
this map is an isomorphism (Exercise 6.2 of \cite{GeerMoonen}). The
product polarization $\lambda_A:A\to A^\vee=(E^\vee)^n$ is induced by
the natural polarization $\lambda:E\to \Pic^0(E)$ on $E$. In
particular, it is also a principal polarization.

 Using this identification, it suffices to prove the proposition in
the case that $f\in \End(A)$ corresponds to a $n\times n$ matrix with
an endomorphism $\alpha\in \End(E)$ as $(j,i)$th component and zeros
everywhere else.  The endomorphism $f^\vee:A^\vee\to A^\vee$ induced by $f$
sends a line bundle ${\mathcal L}$ on $A$ to $p_i^\ast(\alpha^\ast
{\mathcal L}_i)$.  We conclude that the dual isogeny $f^\ast:A\to A=E^n$
corresponds to the matrix with the dual isogeny $\alpha^\vee$ in the
$(i,j)$th coordinate and zeros elsewhere. This proves the proposition.
\end{proof}

\section{Examples}\label{sec:exa}
 In this section we discuss some examples of genus $3$ curves
with CM.

\subsection{Cyclic covers}\label{sec:cyclicexa}
The first type of examples we consider are $N$-cyclic covers of the projective
line branched at exactly three points, see also Sections 1.6 and 1.7 of
Chapter 1 of \cite{Lang}. More precisely, let $C$ be
a smooth projective curve defined over a field of characteristic zero
which admits a Galois cover $\pi:C\to \PP^1$ whose Galois group is
cyclic of order $N$ such that $\pi$ is  branched exactly at three points. We
may assume the three branch points to be $0, 1,\infty\in
\PP^1$. 

Kummer theory implies the existence of integers $0<a_1, a_2<N$
with $\gcd(N, a_1, a_2)=1$ such that the extension of function field
corresponding to $\pi$ is
\[
\Q(x)\subset \Q(x)[y]/(y^N-x^{a_1}(x-1)^{a_2}).
\]
The Galois group of $\pi$ is generated by $\alpha(x,y)=(x, \zeta_N y)$,
where $\zeta_N$ is a primitive $N$th root of unity.

Define $0<a_3<N$ by $a_1+a_2+a_3\equiv 0\pmod{N}$. Then a chart at
$\infty$ may be given by
\[
w^N=z^{a_3}(z-1)^{a_2}, 
\]
where $z=1/x$.  The condition that $\pi$ is branched at $\infty$
is therefore equivalent to $a_3\equiv -(a_1+a_2)\not\equiv 0\pmod{N}$. 
The Riemann--Hurwitz formula shows that
\[
2g(C)-2=-2N+\sum_{i=1}^3(N-\gcd(N, a_i)).
\]

In Lemma \ref{lem:exa} below, we show that the endomorphism ring of
$\Jac(C)$ contains $\Q(\zeta_N)$. Therefore $\Jac(C)$ has CM by
$\Q(\zeta_N)$ if and only if $2g(C)=\varphi(N)$, where $\varphi$
denotes Euler's totient function. For example, this condition is satisfied
if $N$ is an odd prime. This case is discussed by Lang (Section 1.7
of Chapter 1 of \cite{Lang}).

The condition $2g(C)=\varphi(N)$ is satisfied for exactly three curves
$C_i$, up to isomorphism. Kummer theory implies that two tuples $(N,
a_1, a_2, a_3)$ and $(M, b_1, b_2, b_3)$ define isomorphic curves if
and only if $N=M$ and there exists an integer $c$ with $\gcd(c, N)=1$
and a permutation $\sigma\in S_3$ such that $b_i\equiv c
a_{\sigma(i)}\pmod{N}$ for all $i$. This is similar to the argument in Section 1.7
of Chapter 1 of \cite{Lang}.

The three curves satisfying this property are: 
\[
\begin{split}
C_1:\qquad & y^9=x(x-1)^3,\\
C_2:\qquad & y^7=x(x-1)^2,\\
C_3:\qquad &y^7=x(x-1).
\end{split}
\]
 An alternative equation for $C_1$ is
\begin{equation}\label{eq:C1alt}
y^3=z^4-z,\qquad \text{ where }z^3=x.
\end{equation}

We put $K_{N_i}=\Q(\zeta_{N_i})$ and $G_{N_i}=(\Z/N_i\Z)^\ast$. In the
three cases we consider in Lemma \ref{lem:exa}, we have
$G_{N_i}\simeq C_6$. For $j\in (\Z/N_i\Z)^\ast$, we denote the corresponding
element of $\Gal(K_{N_i}/\Q)$ by
\[
\sigma_j :\zeta_{N_i}\mapsto \zeta_{N_i}^j,
\]
or also by $j$ when no confusion can arise.

The following lemma summarizes the properties of the curves
$C_i$. 

\begin{lemma}\label{lem:exa}
\begin{itemize}
\item[(a)] The curve $C_1$ has CM by $\Q(\zeta_9)$. The CM-type is
  $(1,2,4)$. This type is primitive.
\item[(b)] The curve $C_2$ has CM by $\Q(\zeta_7)$ and CM-type
  $(1,2,4)$. This type is imprimitive.
\item[(c)]  The curve $C_3$ has CM by $\Q(\zeta_7)$ and CM-type
  $(1,2,3)$. This type is primitive.
\end{itemize}
\end{lemma}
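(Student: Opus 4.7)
For each curve $C_i$ the argument has the same three-step structure. Riemann--Hurwitz applied to the degree-$N_i$ cyclic cover $\pi_i\colon C_i\to\PP^1$ branched over $\{0,1,\infty\}$ with local monodromy exponents $(a_1,a_2,a_3)$ (read off from the defining equation, with $a_3\equiv-a_1-a_2\pmod{N_i}$) gives $g(C_i)=3$ in all three cases; in particular $2g(C_i)=\varphi(N_i)$. The order-$N_i$ automorphism $\alpha(x,y)=(x,\zeta_{N_i}y)$ therefore induces an embedding $\Z[\zeta_{N_i}]\hookrightarrow\End(\Jac(C_i))$ by sending $\zeta_{N_i}$ to $\alpha^\ast$, and the equality $[\Q(\zeta_{N_i}):\Q]=2g(C_i)$ says exactly that $\Jac(C_i)$ has CM by $\Q(\zeta_{N_i})$, giving the first half of each of (a), (b), (c).

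To read off the CM-type I decompose $H^0(C_i,\Omega^1)$ into eigenspaces for $\alpha^\ast$. Any differential of the form $\omega_{r,s,k}=x^r(x-1)^sy^{-k}\,dx$ satisfies $\alpha^\ast\omega_{r,s,k}=\zeta_{N_i}^{-k}\omega_{r,s,k}$, and the standard Chevalley--Weil formula for cyclic covers of $\PP^1$ branched at three points gives the multiplicity of the eigenvalue $\zeta_{N_i}^{-k}$ as
\[
\dim V_k \;=\; -1+\sum_{j=1}^{3}\Bigl\langle \frac{k\,a_j}{N_i}\Bigr\rangle,
\]
where $\langle\,\cdot\,\rangle$ denotes the fractional part. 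Plugging in $(N;a_1,a_2,a_3)=(9;1,3,5)$, $(7;1,2,4)$ and $(7;1,1,5)$ respectively, exactly three of the $\dim V_k$ equal $1$ (all others vanish), selecting the eigenvalues $\{\zeta_9^{-5},\zeta_9^{-7},\zeta_9^{-8}\}$, $\{\zeta_7^{-3},\zeta_7^{-5},\zeta_7^{-6}\}$ and $\{\zeta_7^{-4},\zeta_7^{-5},\zeta_7^{-6}\}$. Rewriting modulo $N_i$ with positive exponents and identifying each eigenvalue $\zeta^m$ with the embedding $\sigma_m\colon\zeta\mapsto\zeta^m$ yields the CM-types $(1,2,4)$, $(1,2,4)$ and $(1,2,3)$ claimed in (a), (b), (c).

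Since each $K_{N_i}=\Q(\zeta_{N_i})$ is a cyclic sextic CM-field, primitivity is then decided by the stabilizer analysis in Section \ref{subsec:Primitive types in Case 1}. Choosing a generator $\sigma$ of $G_{N_i}$ (for example $\sigma=\sigma_2$ when $N_i=9$ and $\sigma=\sigma_3$ when $N_i=7$), I rewrite each CM-type in the form $\varphi_{a,b}=\{1,\sigma^a,\sigma^b\}$ of that section: the CM-types of $C_1$ and $C_3$ both become $\varphi_{1,2}$, whose stabilizer $H_{1,2}$ is trivial and which is therefore primitive by Proposition \ref{prop: Lang}, while the CM-type of $C_2$ becomes $\varphi_{4,2}$, whose stabilizer $\langle\sigma^2\rangle\simeq C_3$ has fixed field the imaginary quadratic subfield $\Q(\sqrt{-7})\subset\Q(\zeta_7)$, so this CM-type is induced from $\Q(\sqrt{-7})$ and is imprimitive.

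The only real obstacle is bookkeeping. The Chevalley--Weil formula naturally produces eigenvalue exponents with a minus sign, whereas the labels $(1,2,4)$ etc.\ in the statement, and the indexing $\varphi_{a,b}$ in Section \ref{subsec:Primitive types in Case 1}, use positive exponents relative to a fixed generator $\sigma$. Matching these two conventions carefully is the main place where sign errors could creep in, but no conceptual difficulty arises.
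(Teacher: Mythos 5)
Your proposal is correct and follows essentially the same route as the paper: both deduce CM by $\Q(\zeta_{N_i})$ from $2g(C_i)=\varphi(N_i)$ and the order-$N_i$ automorphism, read off the CM-type from the $\alpha^\ast$-eigenspace decomposition of $H^0(C_i,\Omega^1)$ (the paper exhibits explicit eigenforms following Lang, you instead invoke the Chevalley--Weil multiplicity formula, which checks out against the paper's basis, e.g.\ $k\in\{5,7,8\}$ giving eigenvalues $\zeta_9,\zeta_9^2,\zeta_9^4$ for $C_1$), and settle primitivity by the same stabilizer computation from the section on primitive types in Case 1.
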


\begin{proof} 
It is easy to check that the automorphism $\alpha$ of $C_i$ has a fixed
point. Using this point to embed the curve $C_i$ in its Jacobian, we
see that $\alpha$ induces an endomorphism $\alpha\in \End(\Jac(C_i))$
of multiplicative order $N_i$. 

We may regard
 $\alpha\in \End(\Jac(C_i))$ as a primitive $N_i$th root of unity. In
 all three cases, we have
 $2g(C_i)=6=\varphi(N_i)=[\Q(\zeta_{N_i}):\Q]$. It follows that $C_i$
 has CM by $K_{N_i}$.

To calculate the CM-type of $C_i$ we follow the strategy of Section
1.7 of Chapter 1 of \cite{Lang}, and identify the cohomology group
$H^0(C_i, \Omega)$ of holomorphic differentials with the tangent space
of $\Jac(C)$. It suffices to find a basis of $H^0(C_i, \Omega)$
consisting of eigenvectors of $\alpha^\ast$, the map induced by
$\alpha$ on $H^0(C_i,\Omega)$.  Such a basis is computed in Section
1.7 of Chapter 1 of \cite{Lang}. The statement on the CM-type easily
follows from this. (The fact that the action of $\langle
\alpha\rangle$ on $H^0(C_i, \Omega)$ does not factor through the
action of a quotient group provides a second proof that $\alpha$
defines an endomorphism of order $N_i$ of $\Jac(C_i)$.)

We explain what happens for $C_1$. We use a slightly different
notation from Theorem 1.7.1 of Chapter 1 of  \cite{Lang}. A basis of $H^0(C_1,
\Omega)$ is given by
\[
\omega_1=\frac{y\, {\rm d} x}{x(x-1)},\qquad \omega_2=\frac{y^2\, {\rm
    d} x}{x(x-1)}, \qquad \omega_4=\frac{y^4\, {\rm d} x}{x(x-1)^2}.
\]
Note that $\alpha^\ast\omega_i=\zeta_9^i\omega_i$.  The statement on
the CM-type of $\Jac(C_1)$ follows. Primitivity is shown in Section~\ref{subsec:Primitive types in Case 1}.

In Example \ref{eg:zeta7} we have determined all primitive CM-types for $\Q(\zeta_7)$.
The statements on the (im)primitivity of the CM-types of $C_2$ and $C_3$ follow
from this.
\end{proof}

\begin{remark} Lemma \ref{lem:exa}.(b) implies that $\Jac(C_2)$ is not simple.
 We
 may also  check this directly. The curve $C_2$ admits an automorphism
 \[
\beta(x,y)=\left( \frac{1}{1-x}, \frac{y^2}{1-x}\right).
\]
The curve $E:=C_2/\langle \beta\rangle$ has genus $1$. This curve has
CM by the field $K_1=\Q(\zeta_7)^{\langle \sigma_2\rangle}=\Q(\sqrt{-7})$.

One checks that $\langle \alpha, \beta\rangle\simeq \mathbb{Z}/7\mathbb{Z} \rtimes \mathbb{Z}/3\mathbb{Z}$ is
a non-abelian group. Using the method of Kani--Rosen (\cite{KaniRosen} or
\cite{Paulhus}), one may also deduce from this that
\[
\Jac(C_2)\sim E^3.
\]
\end{remark}

Our next goal is to describe the reduction behavior of
the curves $C_1$ and $C_3$.

\begin{lemma}\label{lem: C1}
\begin{itemize}
\item[(a)] The curve $C_1$ has bad reduction at $p=3$ and good
  reduction at all other primes.
\item[(b)] The reduction $\oJ_{1,p}$ of the Jacobian $J_1$ of
  $C_1$ to characteristic $p$ is ordinary if and only if $p\equiv
   1\pmod{9}$ and supersingular if and only if $p=3$ or $p\equiv
  2\pmod{3}$.
\item[(c)] If $p\equiv 4,7\pmod{9}$, then the abelian variety
  $\oJ_{1,p}$ is simple.
\end{itemize}
\end{lemma}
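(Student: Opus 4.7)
The plan is to treat parts (a), (b), and (c) separately, using the explicit equation of $C_1$ together with the CM-type $\Phi = \{1, 2, 4\} \subset (\Z/9\Z)^*$ from Lemma~\ref{lem:exa}.

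For (a), first consider $p \neq 3$. The branch locus $\{0, 1, \infty\}$ of the cyclic cover $y^9 = x(x-1)^3$ is \'etale over $\Spec \Z$, and since $\gcd(p, 9) = 1$ the cover is tame at $p$; hence the normalization of $\PP^1_{\Z_p}$ in the function field is smooth over $\Z_p$, and $C_1$ has good reduction. For $p = 3$, work in characteristic~$3$ and set $Z = y^3/(x-1)$. Then $Z^3 = y^9/(x-1)^3 = x$, so the function field of the reduction equals $\F_3(Z)$, a rational function field; hence the reduction has geometric genus $0 \neq 3$, so $C_1$ has bad reduction at $3$.

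For (b), by Theorem~\ref{thm:SerreTate} the Jacobian $J_1$ has potentially good reduction at every prime, so after extending the base the reduction $\oJ_{1,p}$ is a CM abelian variety and the Shimura--Taniyama formula applies. For each prime $\mathfrak{p}$ of $\Q(\zeta_9)$ above $p$, the associated Newton slope of $\oJ_{1,p}$ is $|\Phi_\mathfrak{p}|/[(\Q(\zeta_9))_\mathfrak{p}:\Q_p]$, where $\Phi_\mathfrak{p}\subset \Phi$ consists of the embeddings inducing $\mathfrak{p}$. Identifying embeddings of $\Q(\zeta_9)$ with elements of $(\Z/9\Z)^*$, primes above $p$ correspond to cosets of the decomposition subgroup $D = \langle \sigma_p \rangle$ (and to the full group $(\Z/9\Z)^*$ when $p = 3$). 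A direct case analysis gives: if $p \equiv 1 \pmod 9$, cosets are singletons and slopes lie in $\{0, 1\}$, so $\oJ_{1,p}$ is ordinary; if $p \equiv 2, 5 \pmod 9$, the unique coset of size $6$ meets $\Phi$ in three elements (slope $1/2$, supersingular); if $p \equiv 8 \pmod 9$, the three cosets $\{a, -a\}$ each meet $\Phi$ in one element (slope $1/2$, supersingular); if $p \equiv 4, 7 \pmod 9$, the cosets $\{1, 4, 7\}$ and $\{2, 5, 8\}$ meet $\Phi$ in $2$ and $1$ elements respectively (slopes $2/3$ and $1/3$); if $p = 3$, the prime is totally ramified, $\Q(\zeta_9)\otimes\Q_3$ is a field of degree $6$, and all three embeddings of $\Phi$ land there (slope $1/2$, supersingular). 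Part (b) follows.

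For (c), at $p \equiv 4, 7 \pmod 9$ the Newton polygon of $\oJ_{1,p}$ has slopes $(1/3, 1/3, 1/3, 2/3, 2/3, 2/3)$. Any nontrivial isogeny decomposition of $\oJ_{1,p}$ into abelian varieties of smaller positive dimension must include at least one elliptic factor, whose Newton polygon has slopes in $\{0, 1\}$ or $\{1/2, 1/2\}$. Neither slope set appears in the Newton polygon of $\oJ_{1,p}$, so no such decomposition exists, and $\oJ_{1,p}$ is simple. The main obstacle is correctly applying the Shimura--Taniyama formula at the ramified prime $p = 3$: one must verify that $3$ is totally ramified in $\Z[\zeta_9]$, so that the unique prime above carries all three embeddings of $\Phi$ and the slope computation yields $1/2$. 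The remainder of the argument reduces to routine orbit counting in $(\Z/9\Z)^*$ and standard Newton-polygon considerations.
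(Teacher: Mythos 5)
Your arguments for part (a) at $p\neq 3$ and for parts (b) and (c) are correct, and (b)--(c) take a genuinely different route from the paper: instead of citing Yui's results on cyclic covers and analyzing the stable reduction at $3$, you compute the Newton polygon directly from the CM-type $\{1,2,4\}$ via the Shimura--Taniyama formula and read off ordinariness, supersingularity, and (for $p\equiv 4,7\pmod 9$) simplicity from the slopes. Your orbit count in $(\Z/9\Z)^\ast$ is correct in every case, including the totally ramified prime $3$, and the slope argument for (c) (no elliptic factor can have slopes $1/3$ or $2/3$) is a clean substitute for the paper's appeal to Theorem~\ref{thm: Lang2}.

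However, there is a genuine gap in your proof of bad reduction at $p=3$. Showing that the naive model $y^9=x(x-1)^3$ reduces modulo $3$ to a curve with rational function field only shows that \emph{this particular model} is degenerate; it does not rule out the existence of a smooth model over a finite (possibly ramified) extension of $\Q_3$. In this paper ``bad reduction'' means failure of \emph{potentially} good reduction, i.e., the stable reduction is singular. The paper flags exactly this issue and closes it with an extra argument: if $C_1$ had potentially good reduction at $3$, the smooth reduction would be a genus-$3$ curve in characteristic $3$ carrying an automorphism of order $9$, and the wild Riemann--Hurwitz bound (via Obus--Pries) shows the contribution of a wildly ramified point of index $9$ to $2g-2$ is at least $2\cdot 8+5\cdot 2=26$, which is impossible for $g=3$. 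Your inference cannot be repaired without some such additional input: applied verbatim to $C_3\colon y^7=x(x-1)$ at $p=7$, the same ``purely inseparable reduction, hence rational, hence bad reduction'' reasoning would contradict Lemma~\ref{lem: C3}(a), since $C_3$ acquires good reduction over $\Q_7(\zeta_7)$ with smooth reduction $w^7-w=v^2$. Note also that your own slope computation cannot substitute for this step, since the Jacobian being supersingular at $3$ is consistent with either good or bad reduction of the curve.
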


\begin{proof}
It is easy to see that $C_1$ has good reduction to characteristic
$p\neq 3$. Indeed, (\ref{eq:C1alt}) still defines a smooth projective
curve in characteristic $p\neq 3$. We consider the reduction at
$p=3$. In this case, the extension of function fields
\[
\F_3(z)\subset \F_3(z)[y]/(y^3-z(z^3-1))
\]
defines a purely inseparable field extension. This implies that
$\F_3(z)[y]/(y^3-z(z^3-1))$ is the function field of a curve of genus
$0$. This does not imply that $C_1$ has bad
reduction to characteristic $3$, since there could be a different
model.

We claim that there does not exist a curve of genus $3$ in
characteristic $3$ with an automorphism of order $9$. This claim
implies that $C$ has bad reduction to characteristic $3$. Indeed, if
$C$ has potentially good reduction, then the automorphism group
$\Aut(\oC)$ of the reduction $\oC$ of $C$ contains $\Aut(C)$. Hence,
in particular, $\Aut(\oC)$ contains an automorphism of order $9$.

To obtain a contradiction, we assume that $X$ is a curve of genus $3$
in characteristic $3$ with an automorphism $\gamma$ of order $9$. We
consider the Galois cover
\[
X\to X/\langle\gamma\rangle.
\]
This cover is wildly ramified of order $9$ above at least one point.  We
apply the Riemann--Hurwitz formula to this cover.  It follows from
Theorem 1.1 of \cite{ObusPries} that the contribution of a wild
ramification point with ramification index $9$ to $2g(X)-2$ in the
Riemann--Hurwitz formula is at least $2\cdot (9-1)+ 5\cdot (3-1)=26$,
which contradicts the assumption that $X$ has genus $3$. This proves
(a).

We have shown that $C_1$ has bad reduction to characteristic $3$. Let
$\oC_{1, 3}$ be the stable reduction of $C_1$ to characteristic
$3$. Then $\oC_{1, 3}$ contains at least $2$ irreducible components of
positive genus (Corollary \ref{cor:SerreTate}). Furthermore, there is
an automorphism of order $9$ acting on $\oC_{1,3}$. The only way this
is possible is if $\oC_{1,3}$ contains three irreducible components of
positive genus, which are then elliptic curves, each with an automorphism
of order $3$.  The automorphism of order $9$ permutes these
components.  There is a unique elliptic curve with an automorphism of
order $3$, namely the elliptic curve with $j=0$. In characteristic $3$
this curve may be given by
\begin{equation}\label{eq:sschar3}
w^3-w=v^2.
\end{equation}
This curve is supersingular by the Deuring--Shafarevich formula
(\cite{Crew}).  We conclude that the reduction $\oJ_{1,3}$ of the
Jacobian of $C_1$ to characteristic $3$ is supersingular. Proposition
\ref{prop:SerreTate}.(b) implies that $\oJ_{1,3}$ is in fact
superspecial: the Jacobian $\oJ_{1,3}$ is isomorphic to three copies
of the supersingular elliptic curve (\ref{eq:sschar3}) as a polarized
abelian variety.

The rest of (b) may be deduced from \cite{Yui}. For $p\equiv
4,7\pmod{9}$, Yui's results \cite{Yui} imply that $\oJ$ is neither
ordinary nor supersingular. In fact, her results imply that $\oJ$ has
$p$-rank zero, but is not supersingular. Theorem \ref{thm: Lang2}
therefore implies that $\oJ$ is simple.
\end{proof}

The situation for $C_3$ is similar but somewhat easier.

\begin{lemma}\label{lem: C3}
\begin{itemize}
\item[(a)] The curve $C_3$ has good reduction at $p\neq 7$ and
  potentially good reduction at $p=7$.
\item[(b)] The reduction $\oJ_{3,p}$ of the Jacobian $J_3$ of
  $C_3$ to characteristic $p$ is ordinary if and only if $
  p\equiv 1\pmod{7}$ and supersingular if and only if $p=7$ or $p\equiv
  -1,3,5\pmod{7}$.
\end{itemize}
\end{lemma}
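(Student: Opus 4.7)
The plan parallels the proof of Lemma~\ref{lem: C1}. For part (a) at $p \neq 7$, the affine equation $y^7 = x(x-1)$ together with the chart at infinity $w^7 = z^5(z-1)$ (where $z = 1/x$) defines a smooth projective model of $C_3$ over $\Z[1/7]$. Smoothness in each chart is routine, since the Kummer cover is tame at every $p \neq 7$.

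For part (a) at $p = 7$, I would argue by contradiction, following Lemma~\ref{lem: C1}(a). Suppose $C_3$ has bad reduction at a prime $\nu \mid 7$ of $M$. By Corollary~\ref{cor:SerreTate}, after enlarging $M$, the stable reduction $\bar{C}_3$ has either three components of genus~$1$ or one component of genus~$1$ and one of genus~$2$. The order-$7$ automorphism $\alpha$ extends to the stable model, and its reduction $\bar\alpha$ on $\bar{C}_3$ has order exactly $7$: indeed, $\alpha$ acts on $J_3$ as a primitive seventh root of unity $\zeta \in \Z[\zeta_7] = \End(J_3)$, and by injectivity of reduction on endomorphism rings (valid since $J_3$ has potentially good reduction by Serre--Tate), $\zeta$ reduces to an endomorphism of $\bar J_3$ of the same order, so $\bar\alpha$ cannot act trivially on $\bar C_3$. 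Since $7$ divides neither $2$ nor $6$, the action of $\bar\alpha$ on the at-most-three positive-genus components of $\bar C_3$ is trivial, and $\bar\alpha$ restricts to an order-$7$ automorphism on each. But in characteristic $7$, elliptic curves have automorphism groups of order dividing $6$, and a Riemann--Hurwitz count rules out an order-$7$ automorphism on any smooth genus-$2$ curve: a $\Z/7$-cover of $\PP^1$ of genus~$2$ would require its wild-ramification different exponents to sum to $16$, yet each equals $6(m+1)$ for some $m \geq 1$ coprime to~$7$ and is therefore at least $12$, with no combination summing to $16$. Contradiction.

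For part (b) at $p \neq 7$, I would invoke \cite{Yui} as in Lemma~\ref{lem: C1}(b). For the cyclic cover $y^7 = x(x-1)$ with primitive CM-type $\Phi = \{1,2,3\} \subset (\Z/7)^{\ast}$, the Newton polygon of $\bar J_{3,p}$ is controlled by the orbits of multiplication-by-$p$ on $(\Z/7)^{\ast}$: the slope attached to an orbit $O$ equals $|\Phi \cap O|/|O|$. Thus $\bar J_{3,p}$ is ordinary iff $p$ splits completely in $K$, i.e.\ $p \equiv 1 \pmod 7$; and supersingular iff every orbit meets $\Phi$ in exactly half its points, equivalently $-1 \in \langle p \rangle \subset (\Z/7)^{\ast}$. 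A direct check of the six residue classes modulo $7$ shows the latter condition holds precisely for $p \equiv 3, 5, 6 \pmod 7$. For $p = 7$: part~(a) yields potentially good reduction, and since $7$ is totally ramified in $\Z[\zeta_7]$ with a unique prime $\mathfrak P$ of residue degree~$1$ and ramification index~$6$, the Shimura--Taniyama formula gives the single Newton slope $|\Phi|/[K:\Q] = 3/6 = 1/2$, so $\bar J_{3,7}$ is supersingular.

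The main obstacle is the $p=7$ case of part~(a): while Riemann--Hurwitz cleanly forbids order-$7$ automorphisms on the candidate components, the delicate step is verifying that $\alpha$ descends to an automorphism of order exactly $7$ on $\bar C_3$. This is best handled by routing $\alpha$ through the Jacobian and exploiting the injectivity of reduction on $\End(J_3)$, thereby transferring the order-$7$ property from the abelian variety side back to the curve.
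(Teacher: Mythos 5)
Your overall architecture for part (b) is sound and, for $p\neq 7$, matches the paper's (both defer to Yui; your explicit description of the slopes of the orbits of multiplication by $p$ on $(\Z/7\Z)^\ast$ is a correct unwinding of that reference). Where you genuinely diverge is at $p=7$. The paper does not argue by contradiction there: it cites an explicit computation (Example 3.8 of the Bouw--Wewers notes) showing that $C_3$ acquires good reduction over $\Q_7(\zeta_7)$ with smooth model $w^7-w=v^2$, and then gets supersingularity by combining the Deuring--Shafarevich formula ($p$-rank $0$) with the existence of the elliptic quotient by the order-$3$ automorphism $\beta(v,w)=(\zeta_3 v,\zeta_3^2 w)$. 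Your substitutes --- an automorphism-theoretic contradiction for potentially good reduction, and Shimura--Taniyama (all slopes $=|\Phi|/6=1/2$ at the totally ramified prime $7$) for supersingularity --- are legitimate alternatives; the Shimura--Taniyama step in particular is clean and correct. What the paper's route buys is the explicit smooth model, which it reuses in part (b); what your route buys is independence from the external reference.

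However, your contradiction argument for potentially good reduction at $p=7$ has a genuine gap. You assert that ``in characteristic $7$, elliptic curves have automorphism groups of order dividing $6$.'' That is true only for automorphisms fixing a point. The positive-genus components of the stable reduction are genus-$1$ curves with no distinguished origin, and an \emph{ordinary} elliptic curve in characteristic $7$ does admit an automorphism of order $7$ as a curve, namely translation by a $7$-torsion point. So the genus-$1$ case of your dichotomy is not ruled out as written. The fix is to observe that $\overline{\alpha}$ stabilizes each positive-genus component and permutes the finitely many nodes lying on it; since there are fewer than $7$ such nodes and $7$ is prime, $\overline{\alpha}$ fixes each node, hence restricts to an order-$7$ automorphism of a genus-$1$ curve \emph{with a fixed point}, which can be normalized to fix the origin and is then impossible. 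Two smaller points: the claim that $\overline{\alpha}$ ``restricts to an order-$7$ automorphism on each'' component is too strong --- it acts with order dividing $7$ on each and with order exactly $7$ on at least one (because acting trivially on every positive-genus component would force triviality on $\oJ$, contradicting the injectivity of reduction on $\End(J_3)$) --- and in the genus-$2$ Riemann--Hurwitz count you should also dispose of the case where the quotient has genus $1$ (there the different would have to sum to $2$, again impossible since each wild contribution is at least $12$ and an \'etale $\Z/7$-cover is excluded). With those repairs your argument goes through, and the rest of the proposal is correct.
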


\begin{proof}
The fact that $C_3$ has good reduction to characteristic $p\neq 7$
follows as in the proof of Lemma \ref{lem: C1}. The curve $C_3$ has
potentially good reduction to characteristic $7$ as well, see Example
3.8 of \cite{Arizona}. The curve $C_3$ does not have good reduction
over $\Q_7$ but acquires good reduction over the extension
$\Q_7(\zeta_7)$ of $\Q_7$.

Statement (b) for $p\neq 7$ follows from \cite{Yui}.
We consider the reduction $\oC_{3, 7}$ of $C$ to characteristic $7$.
 In characteristic
$7$, the reduction $\oC_{3, 7}$ is given by
\[
w^7-w=v^2,
\]
by Example 3.8 of \cite{Arizona}. By the Deuring--Shafarevich formula,
it follows that the Jacobian $\oJ_{3,7}$ of $\oC_{3,7}$ has $p$-rank
$0$. To show that it is supersingular, it suffices to find an elliptic
quotient of the curve $\oC_{3,7}$.

The curve $\oC_{3,7}$ admits an extra automorphism of
order $3$ given by
\[
\beta(v,w)=(\zeta_3 v, \zeta_3^2 w),
\]
where $\zeta_3\in \F_7^\times$ is an element of order three. The
automorphism $\beta$ has exactly two fixed points, namely the points
with $w=0, \infty$. It follows that
$E_{3,7}:=\oC_{3,7}/\langle\beta\rangle$ is an elliptic curve. This
shows that $\oJ_{3,7}$ is supersingular.
\end{proof}

\subsection{A Picard curve example}\label{sec:Picardexa}
We end this section by considering Example 3 from Section 5 of
\cite{KoikeWeng}, wherein Koike and Weng study Picard curves with CM. We  show
that the curve in the aforementioned example has bad reduction to
 characteristic $p=5$, and that
the stable reduction consists of an elliptic curve and a curve of genus
$2$. We will show that the Jacobian has superspecial reduction in this
case. This is an example where the reduction $\oJ$ of the Jacobian is
isomorphic to $E^3$, but the polarization is neither that of a smooth
curve nor the product polarization $E\times \{0\}\times \{0\}+
\{0\}\times E\times \{0\}+\{0\}\times \{0\}\times E$.

 A Picard
curve is a curve of genus $3$ given by an equation
\[
y^3=f(x),
\]
where $f(x)\in \C[x]$ is a polynomial of degree $4$ with simple
roots. Every Picard curve admits an automorphism $\alpha(x,y)=(x,
\zeta_3y)$. Therefore, the endomorphism ring of the Jacobian contains
$\Q(\zeta_3)$.

Let $C_4$ be the smooth projective curve defined by
\[
y^3=f(x):=x^4-13\cdot 2\cdot 7^2\cdot x^2+2^3\cdot 13\cdot 5\cdot 47\cdot
x-5^2\cdot 31\cdot 13^2.
\]
Koike and Weng show that the Jacobian of $C_4$ has CM by the field
$K=K^+K_1$ with $K_1=\Q(\zeta_3)$ and $K^+=\Q[t]/(t^3-t^2-4t-1)$.  The
CM-field $K$ is Galois over $\Q$, hence we are in Case 1 of
Proposition \ref{prop:classification}. One may show that the
corresponding CM-type is primitive. For example, one may check using
\cite{Bouwthesis} that the reduction $\oJ_{4,7}$ of the Jacobian $J_4$
of $C_4$ to characteristic $7$ has $p$-rank $1$, and hence is neither
ordinary nor supersingular. It follows from this that the Jacobian
$J_4$ is simple. The primitivity of the CM-type follows from this, by Theorem \ref{thm: primsimple}.

We now consider the reduction of $C_4$.  The discriminant of $f$ is
$2^{12}\cdot 5^6\cdot 13^4$ which shows that $C_4$ has good reduction
for $p\neq 2,3,5,13$.  One may check that $C_4$ also has good
reduction at $p=2,13$. We do not consider what happens for $p=3$.

 We determine the reduction at $p=5$. Note that
\begin{equation}\label{eq: g=2}
f(x)\equiv x^2(x+2)(x-2)=x^4+x^2\pmod{5}.
\end{equation}
Therefore, the stable reduction of  $C_4$ contains an irreducible
component $\overline{D}$ of genus $2$ given by the equation
\begin{equation}\label{oDeq}
\bar{y}^3=\bar{x}^2(\bar{x}^2+1).
\end{equation}
The reason that this curve has genus $2$ rather than $3$ is that the
$3$-cyclic cover $(\bar{x},\bar{y})\mapsto \bar{x}$ has only $4$
branch points in characteristic $5$, and not $5$ branch points as it had in
characteristic zero. It follows that the curve $C_4$ has bad reduction to characteristic
$5$, and the reduction of $C_4$ consists of the curve $\overline{D}$
of genus $2$ intersecting with an elliptic curve. (We do not actually
have to compute the elliptic component to conclude this.)  The reduction
$\oJ_4$ of the Jacobian of $C_4$ is therefore isogenous to the product
of an elliptic curve and the abelian surface $\Jac(\overline{D})$.  To
determine the reduction type of $\oJ_4$, we first consider the
Jacobian $\Jac(\overline{D})$ of the curve $\overline{D}$ given by the
equation (\ref{oDeq}).

 One may show by computing the Hasse--Witt matrix of $\overline{D}$
 that the Jacobian $J(\overline{D})$ is supersingular. This is a
 similar calculation to the one we did in Section
 \ref{sec:cyclicexa}. However, since $\overline{D}$ has genus $2$, it
 suffices to compute the $p$-rank. In fact, the Hasse--Witt matrix is
 identically zero, which shows that $J(\overline{D})$ is superspecial,
 i.e., isomorphic to the product of two supersingular elliptic
 curves.

Alternatively, we may note that $\overline{D}$ has  additional
automorphisms given by
\[
 \tau(\bar{x}, \bar{y})=(-\bar{x}, \bar{y}), \qquad \rho(\bar{x},
 \bar{y})=\left(-\frac{1}{\bar{x}}, \frac{\bar{y}}{\bar{x}^2}\right), \qquad
\tau\circ\rho(\bar{x},
 \bar{y})=\left(\frac{1}{\bar{x}}, \frac{\bar{y}}{\bar{x}^2}\right).
\]
Note that $\tau$ fixes the two points with $\bar{x}=0, \infty$ and
$\rho$ fixes the two points with $\bar{x}^2=-1$.
The quotients
$\oC_4/\langle \tau\rangle$ and $\oC_4/\langle \rho\rangle$ are
elliptic curves, each with an automorphism of order $3$. In
particular, these elliptic curves have $j=0$. Since $p=5\equiv
2\pmod{3}$, they are supersingular.  Theorem \ref{thm: Lang2} implies
that $\oJ$ is isogenous to $E_0^3$, where $E_0$ denotes the
supersingular elliptic curve over $\overline{\F_5}$ with $j=0$.

\begin{remark}
\label{remark:CM_subfield}
The examples we discussed in this section all have the property that
the CM-field $K$ contains a CM-subfield $K_1$ with $\Q\subsetneq
K_1\subsetneq K$. In Section \ref{subsec:degenerate}, we will show that this implies that the embedding
problem, which we formulate in Section \ref{sec:embedding}, has
degenerate solutions for every prime. This explains why we exclude this case in Theorem \ref{thm:bnd_on_p}.
\end{remark}

\begin{remark} 
\label{remark:CMconstruction}
Let $K$ be a sextic CM-field. It is known how to construct genus $3$ curves $C$ in
characteristic $0$ with CM by $K$ (\cite{Shimura}, Sections 6.2 and
14.3). We sketch the construction. 

We fix a CM-type $(K,\varphi)$. Let $\delta_{K/\Q}$ be the different.  For any ideal
${\mathfrak a}$ of ${\mathcal O}_K$ we consider the lattice
$\varphi({\mathfrak a})=(\varphi_1({\mathfrak a}),
\varphi_2({\mathfrak a}),\varphi_3({\mathfrak a}))$. Then
\[
A:=\C^3/\varphi({\mathfrak a})
\]
is an abelian variety with CM by $(K, \varphi)$.  Shimura (Theorem 3
of Section 6.2 in \cite{Shimura}) shows that all CM abelian varieties
occur in this way.

In Section 14.3 of \cite{Shimura}, Shimura also describes all Riemann
forms defining principal polarizations on $A$. Such a Riemann form
exists if the following two conditions are satisfied.
\begin{itemize}
\item The ideal $\delta_{K/\Q}{\mathfrak a}\overline{{\mathfrak
    a}}=(a)$ is principal.
\item There exists a unit $u\in {\mathcal O}_K$ such that $ua$ is totally
  imaginary and the imaginary part of $\varphi_i(ua)$ is negative for
  all $i$.
\end{itemize}

Every principally polarized
abelian variety of dimension $3$ is isomorphic to the Jacobian of a
(possibly singular) genus 3 curve $C$ by Theorem 4 of
\cite{OortUeno}.
More precisely, Oort and Ueno show that the curve $C$ is of compact
type, meaning that $A$ is isomorphic to the product of the Jacobians of
the irreducible components of positive genus of $C$. (This notion is
essentially the same as the notion ``tree-like'' that we used in Section
\ref{sec:curvered}.)  In our situation, the abelian variety $A$ is
simple, and it follows that the curve $C$ is smooth.
\end{remark}

\section{Embedding problem} 
\label{sec:embedding}

\subsection{Formulation of the embedding problem}\label{61}

Let $C$ be a genus $3$ curve defined over some number field $M$. We
assume that the Jacobian $J=\Jac(C)$ has CM by a sextic CM-field
$K$. After replacing $M$ by a finite extension if necessary, we may
assume that $J$ has good reduction (Theorem \ref{thm:SerreTate}) and
that $C$ has stable reduction at all finite places of $M$.

In this section, we make the following important assumption.

\begin{assumption}\label{ass:embedding1}
 We assume that $K$ does not contain an imaginary quadratic
subfield.
\end{assumption}

Recall that Assumption \ref{ass:embedding1} implies that the CM-type
of $C$ is primitive (Corollary \ref{cor:K_1 in K}). The reason for
making this assumption is discussed in Section \ref{subsec:degenerate}.

Let $\mathfrak{p}$ be a finite prime of $M$ where the curve $C$ has
bad reduction. We write $\overline{k}$ for the algebraic closure of
the residue field at $\frp$ and let $p$ denote the residue characteristic. We want
to bound these primes $p$. (See Theorem \ref{thm:bnd_on_p} for the
precise statement of our result.) Recall from Corollary
\ref{cor:SerreTate} that there are two possibilities for the reduction
$\oC$ of $C$. In this section, we only deal with the case where $\oC$
has three irreducible components of genus $1$ and
postpone the other case for future work. To summarize, we make the following assumption on the prime
$\mathfrak{p}$.

\begin{assumption}\label{ass:embedding2}
 Let $\mathfrak{p}$ be a finite prime of $M$, such that the stable
 reduction $\oC=\oC_{\mathfrak{p}}$ of $C$ at $\mathfrak{p}$ contains
 three elliptic curves as irreducible components (Case (ii) of Corollary
\ref{cor:SerreTate}).
\end{assumption}

Let $\mathfrak{p}$ be as in Assumption \ref{ass:embedding2}.  We write
$E_1, E_2, E_3$ for the three elliptic curves that are the irreducible
components of $\oC$. We write $\oJ$ for the reduction of $J$ at
$\mathfrak{p}$. Recall from Remark \ref{rem:SerreTate} that we have an
isomorphism
\[
\oJ\simeq E_1\times E_2\times E_3
\]
as polarized abelian varieties, i.e., the polarization on $\oJ$ is the
product polarization. Corollary \ref{cor:K_1 in K} implies that the
$E_i$ are supersingular. In particular, they are isogenous. (This also
follows from Theorem \ref{thm: Lang2}).

Let $\End(J)=\mathcal{O}\subset\mathcal{O}_K$. Reduction at the prime
$\frp$ gives an injective ring homomorphism
\[
\mathcal{O}\hookrightarrow\End(\overline{J})\simeq\End(E_1\times E_2\times E_3).
\]

\begin{problem}[The embedding problem]\label{problem:embedding}
Let ${\mathcal O}$ be an order in a sextic CM-field $K$, and let $p$
be a prime number.  The {\em embedding problem} for $\cO$ and $p$ is
the problem of finding elliptic curves $E_1, E_2, E_3$
defined over a field of characteristic $p$, and a ring embedding
\[
i: {\mathcal O}\hookrightarrow \End(E_1\times E_2\times E_3)
\]
such that the Rosati involution on $\End(E_1\times E_2\times E_3)$ induces complex conjugation on ${\mathcal O}$. We call such a ring
embedding a {\em solution to the embedding problem} for $\cO$
and $p$.
\end{problem}

The following result states that if we have a solution to the
embedding problem then the elliptic curves $E_i$ are automatically
isogenous. The proof we give here works directly with the abelian
variety $E_1\times E_2\times E_3$ without considering it as the
reduction of an abelian variety in characteristic zero. However the
proof is essentially the same as the proofs of Theorem \ref{thm: Lang2}
and Proposition \ref{prop: K_1 in K}.

\begin{lemma}
\label{lem:supersingular}
Let $K$ be a sextic CM-field. Suppose that there exist elliptic curves $E_1, E_2, E_3$
defined over a field of characteristic $p>0$ and an injective $\Q$-algebra homomorphism
\[
i: K \hookrightarrow \End^0(E_1\times E_2\times E_3).
\]
Then the elliptic curves $E_1$, $E_2$ and $E_3$ are all isogenous. Furthermore, if $K$ contains no imaginary quadratic subfield then the $E_i$ are supersingular.
\end{lemma}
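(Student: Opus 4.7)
My plan is to exploit the isogeny decomposition of $E_1 \times E_2 \times E_3$ together with the strong constraint that the $6$-dimensional field $K$ embeds into its endomorphism algebra. First I would write $E_1 \times E_2 \times E_3 \sim \prod_j A_j^{n_j}$ up to isogeny over the base field, where the $A_j$ are pairwise non-isogenous simple abelian varieties; since each $E_i$ is simple and embeds into the product, each $A_j$ is necessarily isogenous to some $E_i$ and therefore an elliptic curve, and $\sum_j n_j = 3$. Writing $D_j = \End^0(A_j)$, we have $\End^0(E_1 \times E_2 \times E_3) \simeq \prod_j M_{n_j}(D_j)$, and because $K$ is a field (hence a simple ring) and $1_K$ must map to the identity in every factor, the projection to each factor yields an injection $K \hookrightarrow M_{n_j}(D_j)$.

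For the first claim I would observe that the natural faithful action of $M_{n_j}(D_j)$ on the column space $D_j^{n_j}$ restricts, via the embedding of $K$, to a faithful $K$-action, making $D_j^{n_j}$ into a $K$-vector space. Hence $[K:\Q] = 6$ divides $n_j \dim_\Q D_j$. For an elliptic curve in characteristic $p$ one has $\dim_\Q D_j \in \{1, 2, 4\}$, and $n_j \leq 3$, so the only possibility is $n_j = 3$ with $\dim_\Q D_j \in \{2, 4\}$. Therefore there is a single isotypic component and $E_1 \sim E_2 \sim E_3 \sim E$ for a common elliptic curve $E$, and $D := \End^0(E)$ has $\Q$-dimension $2$ or $4$.

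For the second claim I would assume $K$ has no imaginary quadratic subfield and suppose for contradiction that $E$ is not supersingular. Then $\dim_\Q D = 2$ and $D = F$ is an imaginary quadratic field, which is the center of $\End^0(E_1 \times E_2 \times E_3) \simeq M_3(F)$. Choosing a primitive element $\alpha$ of $K/\Q$ and letting $A \in M_3(F)$ be its image, we have $\Q[A] = K$. By the Cayley--Hamilton theorem the minimal polynomial of $A$ over $F$ has degree at most $3$, so $F[A]$ is a commutative $F$-subalgebra of $M_3(F)$ of $F$-dimension at most $3$, hence of $\Q$-dimension at most $6$. Since $K = \Q[A] \subseteq F[A]$ already has $\Q$-dimension $6$, we must have $F[A] = K$, and in particular $F \subseteq K$, contradicting the hypothesis. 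Hence $E$ is supersingular, and so is each $E_i$. The main conceptual obstacle is that $K \hookrightarrow M_3(F)$ interacts opaquely with the center $F$, so one cannot simply assume $F \subseteq K$ at the outset; the Cayley--Hamilton dimension bound on $F[A]$ is the tool that forces this containment from dimension data alone.
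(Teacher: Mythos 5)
Your proof is correct and takes essentially the same approach as the paper's: decompose $\End^0(E_1\times E_2\times E_3)$ into matrix algebras over division algebras, project the field $K$ into a factor and count $\Q$-dimensions to force a single isotypic component, and then use the fact that the image of a primitive element of $K$ in $M_3(F)$ has minimal polynomial of degree at most $3$ over the imaginary quadratic field $F=\End^0(E)$ to force $F$ into $K$. The paper phrases the first step as a case analysis on which pairs of the $E_i$ are isogenous and the last step via the factorization over $F$ of the degree-$6$ minimal polynomial, but these are only cosmetic variants of your divisibility condition $6\mid n_j\dim_\Q D_j$ and your identity $F[A]=\Q[A]=K$.
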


\begin{proof}
First suppose that no two of the elliptic curves $E_1$, $E_2$, $E_3$ are isogenous. Then
\[i: K\hookrightarrow \End^0(E_1\times E_2\times E_3)=\begin{pmatrix}\End^0 E_1 & 0 & 0\\ 0 & \End^0 E_2 & 0\\ 0& 0 &\End^0 E_3\end{pmatrix}=\End^0 E_1\times \End^0 E_2\times \End^0 E_3. \]

Projecting on the factor $\End^0 E_i$ gives a ring homomorphism $K\hookrightarrow \End^0 E_i$. Since $K$ is a field, this ring homomorphism must be injective. 
But $\End^0 E_i$ is either an imaginary quadratic field or a quaternion algebra, neither of which can contain a sextic field.

Now suppose that exactly two of the elliptic curves are isogenous. Without loss of generality, we may assume that $E_1\sim E_2$ and $E_1\not\sim E_3$. Then
\[i: K\hookrightarrow \End^0(E_1\times E_2\times E_3)=\begin{pmatrix}\End^0 E_1 & \End^0 E_1 & 0\\ \End^0 E_1 & \End^0 E_1 & 0\\ 0& 0 &\End^0 E_3\end{pmatrix}=M_2(\End^0 E_1)\times \End^0 E_3. \]
Again, projecting on the factor $\End^0 E_3$, we see that $K\hookrightarrow \End^0 E_3$. This is impossible for dimension reasons. Thus, we have proved that all three elliptic curves are isogenous.

Now suppose that $K$ contains no imaginary quadratic subfield and that
the elliptic curves $E_i$ are ordinary. Then $\End^0 E_1=K_1$ for some
imaginary quadratic field $K_1$ and
\[i: K\hookrightarrow \End^0(E_1\times E_2\times E_3)=M_3(K_1).\]
Let $\beta$ be a generator for $K$ over $\Q$ and let $f$ be its
minimal polynomial, which has degree $6$. The matrix $i(\beta)\in
M_3(K_1)$ has a minimal polynomial of degree at most $3$ over
$K_1$. Since $i$ is an injective $\Q$-algebra homomorphism, this means
that $f$ splits over $K_1$. Since $K_1$ is quadratic, this implies
that $K_1\hookrightarrow K$, contradicting
the assumption that $K$ contains no imaginary quadratic subfield.
\end{proof}

\begin{proposition}\label{prop:assumptions}
Let $C$ be a genus $3$ curve such that ${\mathcal O}:=\End(\Jac(C))$
is an order in a sextic CM-field $K$ satisfying Assumption
\ref{ass:embedding1}.  Let $M$ be a number field over which $C$ is
defined, and let $\mathfrak{p}$ be a prime of bad reduction of $C$
such that Assumption \ref{ass:embedding2} is satisfied. Write $p$ for
the residue characteristic of $\mathfrak{p}$. Then there exists a
solution to the embedding problem for $\cO$ and $p$. Moreover, in this situation the
three elliptic curves are supersingular.
\end{proposition}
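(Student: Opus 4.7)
The plan is to assemble the statement directly from results already proved in the paper, with essentially no new calculation; the content of the proposition is that Assumptions \ref{ass:embedding1} and \ref{ass:embedding2} force all of the hypotheses in the definition of a solution to the embedding problem. First I would invoke Theorem \ref{thm:SerreTate} to ensure, after enlarging $M$ if necessary, that $J = \Jac(C)$ has good reduction at $\mathfrak{p}$, so that reduction induces an injective ring homomorphism
\[
\iota \colon \mathcal{O} = \End(J)\hookrightarrow \End(\oJ).
\]
By Assumption \ref{ass:embedding2} we are in Case (ii) of Corollary \ref{cor:SerreTate}, so Proposition \ref{prop:SerreTate}(b) (and Remark \ref{rem:SerreTate}) identifies $\oJ$ with $E_1\times E_2\times E_3$ as polarized abelian varieties, where the three $E_i$ are the genus $1$ components of $\oC$ and the polarization is the product polarization. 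Thus $\iota$ is a ring embedding
\[
\iota \colon \mathcal{O}\hookrightarrow \End(E_1\times E_2\times E_3).
\]

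Next I would verify the Rosati-compatibility condition. Since $K$ has no imaginary quadratic subfield, Corollary \ref{cor:K_1 in K}(a) tells us that $J$ is absolutely simple with primitive CM-type, so Proposition \ref{prop: Rosati} applies to the principally polarized Jacobian $J$: the Rosati involution on $\End^0(J) = K$ is complex conjugation. By Remark \ref{rem: Rosati}, the Rosati involution on $\End^0(\oJ)$ attached to the reduced (product) polarization extends the one on $\End^0(J)$, so it induces complex conjugation on the image $\iota(\mathcal{O})\subset K$. This is precisely the condition required for $\iota$ to be a solution to Problem \ref{problem:embedding} for $\mathcal{O}$ and $p$.

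Finally, the supersingularity of the $E_i$ follows immediately from Lemma \ref{lem:supersingular} applied to the induced $\Q$-algebra embedding $K\hookrightarrow \End^0(E_1\times E_2\times E_3)$, together with Assumption \ref{ass:embedding1}. (Alternatively, one could cite Corollary \ref{cor:K_1 in K}(b).) There is no real obstacle here: the entire proposition is a packaging of the polarized-reduction statement of Proposition \ref{prop:SerreTate}, the Rosati computation of Proposition \ref{prop: Rosati}/Remark \ref{rem: Rosati}, and the supersingularity output of Lemma \ref{lem:supersingular}. The only subtlety worth flagging explicitly is that we need $\oJ \simeq E_1\times E_2\times E_3$ as \emph{polarized} abelian varieties (not merely up to isogeny) in order for Remark \ref{rem: Rosati} to transport complex conjugation to the product Rosati involution; this is exactly what Proposition \ref{prop:SerreTate}(b) delivers in Case (ii).
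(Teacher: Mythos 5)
Your proposal is correct and follows essentially the same route as the paper's own proof: both use Proposition \ref{prop:SerreTate}/Remark \ref{rem:SerreTate} to identify $\oJ$ with $E_1\times E_2\times E_3$ as polarized abelian varieties, Proposition \ref{prop: Rosati} together with Remark \ref{rem: Rosati} to transport complex conjugation to the product Rosati involution, and Corollary \ref{cor:K_1 in K}(b) for supersingularity. Your explicit flagging of the need for the isomorphism to hold as \emph{polarized} abelian varieties is a nice touch that the paper leaves implicit in its citation of Remark \ref{rem:SerreTate}.
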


\begin{proof}  Let $C$ be as in the statement of the proposition. Then the
CM-type of its Jacobian $J$ is primitive (Corollary \ref{cor:K_1 in
  K}.(a)). Therefore the Rosati involution acts as complex conjugation
on $\End^0(J)=K$ by Proposition \ref{prop: Rosati}. The canonical
polarization on the Jacobian $J$ is a principal polarization,
therefore the Rosati involution also acts on $\End(J)={\mathcal O}$.

Assumption \ref{ass:embedding2} implies that the reduction
$\overline{J}$ of the Jacobian at $\mathfrak{p}$ is isomorphic to a
product of three elliptic curves $E_i$ as polarized abelian
varieties. These elliptic curves are supersingular (Corollary
\ref{cor:K_1 in K}.(b)).  Remark \ref{rem: Rosati} shows that we
obtain a solution to the embedding problem.
\end{proof}

\subsection{Endomorphisms of $\overline{J}$ as $3\times 3$ matrices}
\label{sec:endo}
 In this section we describe the ring $\End(E_1\times E_2\times E_3)$
 from the embedding problem (Problem \ref{problem:embedding}). Recall
 that we may assume that the $E_i$ are isogenous (Lemma
 \ref{lem:supersingular}). We recall from Proposition \ref{prop: Rosati2} the
 description of the Rosati involution corresponding to the product
 polarization on $E_1\times E_2\times E_3$.

We can view an element $f \in \End(E_1 \times E_2 \times E_3)$ as a
matrix
$$
f=\begin{pmatrix}
f_{1,1} & f_{1,2} & f_{1,3} \\
f_{2,1} & f_{2,2} & f_{2,3} \\
f_{3,1} & f_{3,2} & f_{3,3}
\end{pmatrix}, $$
where $f_{i,j} \in \Hom(E_j,E_i)$. Given two endomorphisms $f,g$ the
composition $ f \circ g$ corresponds to multiplication of
matrices. Since the polarization on $\oJ=E_1\times E_2\times E_3$ is
the product polarization, the Rosati involution $f \mapsto f^{\ast}$
sends $f$ to
$$\begin{pmatrix}
f_{1,1}^{\vee} & f_{2,1}^{\vee} & f_{3,1}^{\vee} \\
f_{1,2} ^{\vee}& f_{2,2} ^{\vee}& f_{3,2}^{\vee} \\
f_{1,3} ^{\vee}& f_{2,3}^{\vee} & f_{3,3}^{\vee}
\end{pmatrix} $$
where $f_{i,j}^{\vee}$ denotes the dual isogeny of $f_{i,j}$.

For $i=2,3$, let $\psi_i:E_1\rightarrow E_i$ be an isogeny of degree
$\delta_i$. Let $f\in \End(E_1\times E_2\times E_3)$. Then the
composition
\[
\xymatrix{
E_1\times E_1\times E_1\ar[r]^{(1,\psi_2,\psi_3)} & E_1 \times
E_2\times E_3
\ar[rr]^{(1,\delta_2^{-1}\psi_2^{\vee},\delta_3^{-1}\psi_3^{\vee})} &&
E_1\times E_1\times E_1
}
\]
induces an injective $\mathbb{Q}$-algebra homomorphism
\begin{equation}
\label{eq: Ei to E1}
\End^0 (E_1\times E_2\times E_3)\hookrightarrow \End^0(E_1\times E_1\times E_1)=M_3(\End^0 E_1).
\end{equation}
Let $\Phi$ denote the composite map
\[\Phi: K\hookrightarrow\End^0(E_1\times E_2\times E_3)\hookrightarrow M_3(\End^0 E_1).\]
It is easily seen that
$$\begin{pmatrix}
1& 0 & 0 \\
0& \delta_2 & 0 \\
0& 0 & \delta_3
\end{pmatrix}\Phi(\calO)\subset M_3(\End E_1).$$

Under the assumptions made in Section \ref{61}, we may assume that the
elliptic curves $E_i$ in the formulation of the embedding problem are
supersingular (Proposition \ref{prop:assumptions}). We therefore
recall some well-known facts on the endomorphism ring of a
supersingular elliptic curve.

 Let $p\in\mathbb{Z}_{>0}$ be the rational prime
lying below $\frp$.

\begin{proposition}
\label{prop:quaternion algebra}
Let $E$ be a supersingular elliptic curve defined over a field of
characteristic $p$. 
 Then $\End^0 E$ is a quaternion algebra over $\Q$
ramified at precisely the places $\{p, \infty\}$. This quaternion
algebra is non-canonically isomorphic to the algebra $B_{p,\infty}$,
where $B_{p,\infty}=(\frac{-1,-1}{\mathbb{Q}})$ if $p=2$ and if $p$ is
odd, $B_{p,\infty}=(\frac{-\varepsilon,-p}{\mathbb{Q}})$ where
\[
\varepsilon=\begin{cases} 1 & \textrm{if}\ p\equiv 3\pmod{4},\\ 2 &
\textrm{if}\ p\equiv 5\pmod{8},\\ \ell &
\textrm{if}\ p\equiv 1\pmod{8}.
\end{cases}
\]
In the case that $p\equiv 1\pmod{8}$, $\ell\in\mathbb{Z}_{>0}$ is a
prime such that $\ell\equiv 3 \pmod{4}$ and $\ell$ is not a square
modulo $p$.  Any isomorphism sends $\End E$ to an order of
$B_{p,\infty}$ and the involution given by taking the dual isogeny
corresponds to the canonical involution on $B_{p,\infty}$.
\end{proposition}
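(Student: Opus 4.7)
The plan is to assemble this statement from standard results in the theory of supersingular elliptic curves and the classification of quaternion algebras over $\Q$ by their ramification sets, with a case analysis at the end to produce the explicit presentation $(\tfrac{-\varepsilon,-p}{\Q})$.

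First I would establish that $\End^0 E$ is a quaternion algebra over $\Q$. Since $E$ is simple, $\End^0 E$ is a division algebra. Supersingularity forces the Frobenius $\pi$ to satisfy $\pi \overline{\pi} = p$ with $\pi,\overline{\pi}$ generating a non-commutative subring of $\End^0 E$ (by Deuring), so $\End^0 E$ is strictly larger than an imaginary quadratic field. A dimension count (via $\ell$-adic Tate modules for $\ell \neq p$, or by citing Theorem IV.2 of Deuring/Chapter III.9 of Silverman's arithmetic geometry book) pins $[\End^0 E:\Q]=4$, so $\End^0 E$ is a quaternion division algebra.

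Next I would identify its ramification set. The Rosati involution $\phi\mapsto \phi^\vee$ is positive (Section~\ref{sec:polarization}), forcing $\End^0 E\otimes_\Q \R$ to be the Hamilton quaternions $\bbH$, i.e.\ ramified at $\infty$. For an $\ell \neq p$, the Tate module $T_\ell E$ is a free $\Z_\ell$-module of rank $2$ on which $\End^0 E \otimes \Q_\ell$ acts faithfully, giving an embedding into $M_2(\Q_\ell)$; since a quaternion algebra that embeds in $M_2$ over a local field is split, we conclude unramification at every $\ell \neq p$. Ramification at $p$ then follows from the product formula for Hilbert symbols (or equivalently, because the total number of ramified places of a quaternion algebra over $\Q$ is even). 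This shows $\End^0 E$ is the unique quaternion algebra over $\Q$ ramified at exactly $\{p,\infty\}$, hence $\End^0 E \isom B_{p,\infty}$.

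The explicit form of $B_{p,\infty}$ then reduces to a Hilbert symbol verification: one must check that the algebra $(\tfrac{-\varepsilon,-p}{\Q})$, for the $\varepsilon$ prescribed in each residue class of $p$ mod $8$, has Hilbert symbol $-1$ at $\{p,\infty\}$ and $+1$ elsewhere. At $\infty$ this is immediate since both slots are negative. At odd primes $q\ne p$, $(-\varepsilon,-p)_q=1$ unless both $\varepsilon$ and $p$ are units with specific quadratic residue behavior, which is controlled by the choice of $\varepsilon$. The most delicate prime is $q=2$: the case split on $p\bmod 8$ is exactly what is needed so that $(-\varepsilon,-p)_2=+1$. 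For $p\equiv 3\pmod 4$ one checks $(-1,-p)_2=+1$ directly; for $p\equiv 5\pmod 8$ the factor $\varepsilon = 2$ is introduced to correct the $2$-adic symbol; for $p\equiv 1\pmod 8$ one needs an auxiliary prime $\ell\equiv 3\pmod 4$ with $\ell$ a nonresidue mod $p$ (which exists by Dirichlet), and then Hilbert reciprocity forces ramification at exactly $\{p,\infty\}$. The case $p=2$ is classical: $B_{2,\infty}=(\tfrac{-1,-1}{\Q})$. The hardest (most bookkeeping-heavy) part is this Hilbert symbol case analysis, although each individual computation is routine.

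Finally, the statements about orders and involutions follow once the algebra structure is fixed. Since $\End E$ is a subring of $\End^0 E$ that is finitely generated as a $\Z$-module (degrees of isogenies are bounded on bounded subsets) and contains a $\Q$-basis of $\End^0 E$, its image under any $\Q$-algebra isomorphism $\End^0 E\xrightarrow{\sim} B_{p,\infty}$ is an order of $B_{p,\infty}$. For the last assertion, the identity $\phi\circ\phi^\vee = [\deg\phi]$ together with $\phi+\phi^\vee=[\tr\phi]$ shows that $\phi\mapsto\phi^\vee$ is the unique involution of $\End^0 E$ under which norm and trace coincide with reduced norm and reduced trace, i.e.\ the canonical involution of the quaternion algebra $B_{p,\infty}$.
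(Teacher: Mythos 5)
Your proposal is correct and follows essentially the same route as the paper's proof, which simply cites Section 21 of Mumford's \emph{Abelian Varieties} for the quaternion-algebra structure, the ramification at $\{p,\infty\}$, and the identification of the dual-isogeny involution, cites Vign\'eras for the uniqueness of the quaternion algebra with a given ramification set, and notes that the explicit presentation follows from properties of the Hilbert symbol. You have merely unpacked those citations with the standard arguments (positivity of the degree form at $\infty$, the Tate-module embedding at $\ell\neq p$, parity of the ramification set at $p$, and the residue-class-by-residue-class Hilbert symbol check), all of which are sound.
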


\begin{proof}
The fact that the endomorphism algebra $\End^0(E)$ of a supersingular
elliptic curve is a quaternion algebra over $\Q$ ramified precisely at
$\{p,\infty\}$ is proved, for example, in Section 21 of \cite{MAV}. The
statement on the Rosati involution is also proved in loc. cit. The
uniqueness of the quaternion algebra is proved, for example, in Theorem
III.3.1 of \cite{Vigneras}.

For $p=2$, let $Q=(\frac{-1,-1}{\mathbb{Q}})$. For every odd prime $p$, let $\varepsilon$ be as in the statement of the
proposition and let $Q=(\frac{-\varepsilon,-p}{\mathbb{Q}})$ be the
corresponding quaternion algebra. The statement that $Q$ is exactly
ramified at the places $\{p,\infty\}$ follows easily from the
properties of the Hilbert symbol (page 37 of
\cite{Vigneras}).
\end{proof}

For $b\in B_{p,\infty}$, we write $\Nrd(b) = bb^*$ (where $b^*$
represents the involution on the quaternion algebra) for the reduced
norm of $b$.  The reduced norm corresponds to the degree of an
endomorphism under the identification in
Proposition~\ref{prop:quaternion algebra}.

\begin{lemma}[Elements of small norm commute] \cite[Corollary 2.1.2]{GorenLauter07}
\label{smallnorms}
Let $R$ be a maximal order of $B_{p,\infty}$. If $k_1,k_2 \in R$ and $\Nrd(k_1), \Nrd(k_2) < \sqrt{p}/2$
then $k_1k_2 = k_2k_1$.
\end{lemma}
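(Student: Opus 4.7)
The plan is to argue by contradiction, using that the reduced trace form on the maximal order $R$ is positive definite (because $B_{p,\infty}$ is ramified at $\infty$) together with the known discriminant of $R$.

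Suppose $k_1k_2\neq k_2k_1$. I would first check that $\{1,k_1,k_2,k_1k_2\}$ is then linearly independent over $\Q$. If $1,k_1,k_2$ were dependent, then $k_1$ would be rational (hence central) or $k_2\in\Q[k_1]$, either of which forces $k_1k_2=k_2k_1$. So $\Q[k_1,k_2]$ strictly contains the quadratic field $\Q[k_1]$; in a quaternion algebra, any subalgebra properly containing a quadratic subfield is the whole algebra, so $1,k_1,k_2,k_1k_2$ span $B_{p,\infty}$ and generate a full lattice $\Lambda\subseteq R$.

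Next I would pass to discriminants with respect to the symmetric bilinear form $\langle x,y\rangle=\tr(xy^{\ast})$, where $x\mapsto x^{\ast}$ is the canonical involution. Because $R$ is a maximal order of $B_{p,\infty}$, whose reduced discriminant is $p$, the discriminant of $R$ for this form equals $p^{2}$, and hence
\[
\disc(\Lambda)=[R:\Lambda]^{2}\,p^{2}\;\geq\; p^{2}.
\]
On the other hand, $\disc(\Lambda)$ is the determinant of the Gram matrix $G=(\langle e_i,e_j\rangle)$ of the basis $e_1=1$, $e_2=k_1$, $e_3=k_2$, $e_4=k_1k_2$. Since $B_{p,\infty}$ is definite, $\langle\cdot,\cdot\rangle$ is positive definite, so Hadamard's inequality gives $\det G\leq\prod_{i}G_{ii}$. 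A direct computation, using multiplicativity of the reduced norm, shows
\[
G_{11}=2,\quad G_{22}=2\Nrd(k_1),\quad G_{33}=2\Nrd(k_2),\quad G_{44}=2\Nrd(k_1)\Nrd(k_2),
\]
whence $\det G\leq 16\,\Nrd(k_1)^{2}\Nrd(k_2)^{2}$.

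Combining the two inequalities yields $p^{2}\leq 16\,\Nrd(k_1)^{2}\Nrd(k_2)^{2}$, i.e.\ $\Nrd(k_1)\Nrd(k_2)\geq p/4$. But the hypothesis $\Nrd(k_i)<\sqrt{p}/2$ gives $\Nrd(k_1)\Nrd(k_2)<p/4$, a contradiction. The main obstacle is really bookkeeping: verifying the normalization so that the reduced discriminant of a maximal order in $B_{p,\infty}$ is $p$ (giving $\disc=p^{2}$ in the trace form I chose), and checking that the Hadamard bound on $\det G$ is tight enough in the form used. Everything else is a clean numerical comparison; no further information about the structure of $R$ is needed beyond positive-definiteness of the norm form and the value of the discriminant.
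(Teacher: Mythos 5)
The paper itself gives no proof of this lemma --- it is quoted directly from Goren--Lauter \cite[Cor.\ 2.1.2]{GorenLauter07} --- and your discriminant-comparison argument is essentially the proof given in that reference, so you have correctly reconstructed the intended argument. All the steps check out: the form $\langle x,y\rangle=\tr(xy^{\ast})$ is symmetric and positive definite with $\langle x,x\rangle=2\Nrd(x)$ because $B_{p,\infty}$ is definite, its Gram determinant on a maximal order is $p^{2}$ (the square of the reduced discriminant, with either trace-form convention), Hadamard's inequality then yields $\Nrd(k_1)\Nrd(k_2)\geq p/4$, and the only point worth tightening is the independence of $1,k_1,k_2,k_1k_2$, which is cleanest directly: from $(a+bk_1)+(c+dk_1)k_2=0$ in the division algebra $B_{p,\infty}$ one gets $k_2\in\Q[k_1]$ unless $c+dk_1=0$, and either alternative forces commutativity or $a=b=c=d=0$.
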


\subsection{Bounding the primes of bad reduction for $C$}\label{62}
Recall that $J=\Jac(C)$ is the Jacobian of a genus $3$ curve $C$ which
has complex multiplication by an order $\calO$ in a sextic CM-field $K$ which does not
contain an imaginary quadratic field (Assumption \ref{ass:embedding1}).
Let $K^+$ denote the totally real cubic subfield of $K$. The main result of this
section is Theorem \ref{thm:bnd_on_p} which gives an upper bound on
the primes of bad reduction for $C$ satisfying Assumption
\ref{ass:embedding2}.

\begin{theorem}
\label{thm:bnd_on_p}
Suppose that $K$ does not contain an imaginary quadratic subfield. Let $\mathfrak{p}\mid p$ be a prime of bad reduction for $C$ satisfying Assumption \ref{ass:embedding2}. Write
$K=\mathbb{Q}(\sqrt{\alpha})$ for some totally negative element
$\alpha\in K^+\setminus\mathbb{Z}$ with $\sqrt{\alpha}\in\cO=\End (J)$. Then
$p\leq 4\Tr_{K^+/\Q}(\alpha)^6/3^6.$
\end{theorem}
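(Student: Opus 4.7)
The plan is to use Proposition \ref{prop:assumptions} to convert the hypothesis of bad reduction into a solution of the embedding problem, and then extract a numerical bound on $p$ by combining a trace identity with Lemma \ref{smallnorms}. First I would invoke Proposition \ref{prop:assumptions} to obtain a ring embedding $\Phi:\cO\hookrightarrow\End(E_1\times E_2\times E_3)$ with the $E_i$ supersingular and with the Rosati involution (for the product polarization) acting as complex conjugation on $\cO$. Set $T:=\Phi(\sqrt{\alpha})$ and write $T=(T_{ij})$ with $T_{ij}\in\Hom(E_j,E_i)$. Since Proposition \ref{prop: Rosati2} gives the Rosati involution as $(T_{ij})\mapsto(T_{ji}^{\vee})$ and $\sqrt{\alpha}^{\ast}=-\sqrt{\alpha}$, we obtain the anti-self-adjoint relations $T_{ji}^{\vee}=-T_{ij}$; in particular each diagonal entry $T_{ii}$ has reduced trace zero in $\End^{0}E_i\cong B_{p,\infty}$, so $T_{ii}^{2}=-\deg(T_{ii})$ is a rational scalar.

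Next I would compute $T^{2}=\Phi(\alpha)$ entry by entry. Using $T_{ki}=-T_{ik}^{\vee}$, the diagonal simplifies to $(T^{2})_{ii}=\sum_{k}T_{ik}T_{ki}=-\sum_{k}\deg(T_{ik})\cdot 1_{E_i}$, a rational scalar. Because $\Phi(K)$ is a maximal subfield of the central simple algebra $M_3(B_{p,\infty})$ (reduced degree $6$), the reduced trace restricts to $\Tr_{K/\Q}$ on $\Phi(K)$; applied to $\alpha\in K^{+}$ this yields the intrinsic identity
\[
\sum_{i,k=1}^{3}\deg(T_{ik}) \;=\; -\Tr_{K^{+}/\Q}(\alpha) \;=\; |\Tr_{K^{+}/\Q}(\alpha)|.
\]
Fixing isogenies $\psi_i:E_1\to E_i$ to identify $\End^{0}(E_1\times E_2\times E_3)$ with $M_3(B_{p,\infty})$ as in Section \ref{sec:endo}, and choosing a maximal order $R\subset B_{p,\infty}$ containing the relevant entries after clearing the $\delta_i$-denominators, I would then examine the three-cycle products built from $T$, such as $x=T_{12}T_{23}T_{31}\in\End E_1\subset R$. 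By multiplicativity of the reduced norm and AM-GM applied to three of the nine summands in the trace identity,
\[
\Nrd(x)\;=\;\deg(T_{12})\deg(T_{23})\deg(T_{31})\;\leq\;\Bigl(\tfrac{\deg T_{12}+\deg T_{23}+\deg T_{31}}{3}\Bigr)^{3}\;\leq\;\Bigl(\tfrac{|\Tr_{K^{+}/\Q}(\alpha)|}{3}\Bigr)^{3}.
\]

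The crucial structural step is then to rule out global commutativity. If all the entries of $T$ (viewed in $B_{p,\infty}$ via the identification) pairwise commuted, they would generate a commutative subring $A\subset R$ whose fraction field $L$ is either $\Q$ or an imaginary quadratic field; but then $\Phi(K)=\Q[T]\subset M_3(L)$, and since maximal commutative $L$-subalgebras of $M_3(L)$ have $L$-dimension $3$, a dimension count (in the spirit of Lemma \ref{lem:supersingular}) forces $L\subset\Phi(K)\cong K$, contradicting Assumption \ref{ass:embedding1}. Hence some pair of elements of $R$ built from the entries of $T$ fails to commute, and Lemma \ref{smallnorms} forces at least one such element to have reduced norm $\geq\sqrt{p}/2$. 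Choosing this non-commuting pair to lie among the three-cycle products, whose reduced norms are bounded by $(|\Tr_{K^{+}/\Q}(\alpha)|/3)^{3}$, gives $\sqrt{p}/2\leq(|\Tr_{K^{+}/\Q}(\alpha)|/3)^{3}$; squaring yields the stated bound $p\leq 4\Tr_{K^{+}/\Q}(\alpha)^{6}/3^{6}$.

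The hard part will be the last step: a naive application of Lemma \ref{smallnorms} to a single non-commuting pair of matrix entries gives only the weaker estimate $p\leq 4\Tr_{K^{+}/\Q}(\alpha)^{2}$, so obtaining the cubic-in-$|\Tr|$ bound requires passing from individual entries to three-cycle products in $\End E_1$ and carefully handling the isogeny-normalizations $\delta_i=\deg(\psi_i)$ that appear in the identification $\End^{0}(E_1\times E_2\times E_3)=M_3(B_{p,\infty})$. The anti-self-adjoint relation $T_{ji}^{\vee}=-T_{ij}$ collapses most ``short'' combinations of entries (products such as $T_{ij}T_{ji}$ and the two opposite three-cycles $T_{12}T_{23}T_{31}$, $T_{13}T_{32}T_{21}$) into scalars or into quaternion conjugates of one another, so pinning down a genuinely non-commuting pair in $R$ to which the AM-GM estimate can be applied is the main technical point.
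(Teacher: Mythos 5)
Your skeleton matches the paper's: Proposition \ref{prop:assumptions} to convert bad reduction into a solution of the embedding problem with supersingular $E_i$, the anti-self-adjointness $T_{ji}^{\vee}=-T_{ij}$ from Proposition \ref{prop: Rosati2}, the identity $\sum_{i,k}\deg(T_{ik})=-\Tr_{K^+/\Q}(\alpha)$ (this is the paper's \eqref{eq: sum of norms}, proved there by hand in Lemma \ref{lem: Tr(alpha)} via an explicit embedding into $M_{12}(\Q)$ rather than by quoting the reduced trace of a maximal subfield), the non-commutativity argument (Lemma \ref{lem: noncommutative}), and Lemma \ref{smallnorms}. But the final step, which you yourself flag as ``the main technical point,'' is a genuine gap rather than a detail: in $\End^0 E_1$ the two three-cycle products are $x=T_{12}T_{23}T_{31}$ and $T_{13}T_{32}T_{21}=(-T_{31}^{\vee})(-T_{23}^{\vee})(-T_{12}^{\vee})=-x^{\vee}$, and $x$ always commutes with $x^{\vee}$ because $x+x^{\vee}$ is a rational scalar. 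So there is never a non-commuting pair among the three-cycle products, and applying Lemma \ref{smallnorms} to them yields no contradiction. More generally, non-commutativity of the nine entries does not automatically transfer to some preferred family of norm-bounded triple products; one must exhibit a specific finite set of \emph{integral} elements, each of reduced norm $<\sqrt{p}/2$, whose pairwise commutativity would force all nine entries to commute.

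The paper produces exactly such a set by a change of basis that your proposal does not perform. Lemma \ref{lem: st nonzero} first shows, via an eigenvalue/irreducibility argument and after permuting the $E_i$, that $s=Q_{12}$ and $t=Q_{13}$ may be assumed nonzero; one then takes $\psi_2=s^{\vee}$ and $\psi_3=t^{\vee}$ as the identifying isogenies in \eqref{eq: Ei to E1}. In the resulting matrix $T$ of \eqref{eq: matrix T} the first row and column consist of rational scalars, and the remaining entries are (up to the scalars $\delta_i$) the integral elements $r$, $svs^{\vee}$, $swt^{\vee}$, $tw^{\vee}s^{\vee}$, $tzt^{\vee}$: triple products of entries of $Q$ with reduced norms $\deg(r)$, $\deg(s)^2\deg(v)$, $\deg(s)\deg(w)\deg(t)$, $\deg(t)^2\deg(z)$, each bounded by $(\sqrt[6]{p/4})^{3}=\sqrt{p}/2$ by maximizing $x^2(a-2x)$ against \eqref{eq: sum of norms}. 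Since these are scalar multiples of the entries of $T$, their commuting would make all entries of $T$ commute, contradicting Lemma \ref{lem: noncommutative} and completing the proof. Without this construction (or an equivalent one) your argument stalls at the quadratic bound you mention and does not reach the stated sixth-power bound.
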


The existence of such $\alpha$ is guaranteed because the sextic CM-field $K$ contains no imaginary quadratic subfield. By Proposition \ref{prop:assumptions}, the following result implies Theorem \ref{thm:bnd_on_p}.

\begin{theorem}
\label{thm:no embedding}
Suppose that $K$ does not contain an imaginary quadratic
subfield. Let $p$ be a prime such that there exists a solution to the embedding
problem (Problem \ref{problem:embedding}) for some order $\cO$ of
$K$. Write $K=\mathbb{Q}(\sqrt{\alpha})$ for some totally negative
element $\alpha\in K^+\setminus\mathbb{Z}$ with
$\sqrt{\alpha}\in\cO$. Then $p\leq 4\Tr_{K^+/\Q}(\alpha)^6/3^6.$
\end{theorem}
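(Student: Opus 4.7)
The plan is to assume a solution $i:\cO\hookrightarrow\End(E_1\times E_2\times E_3)$ exists and derive a contradiction under the hypothesis $p > 4T^6/3^6$, where I set $T := \Tr_{K^+/\Q}(\alpha) < 0$ (an integer, since $\alpha$ is a totally negative algebraic integer). By Lemma~\ref{lem:supersingular}, the $E_i$ are supersingular. Fixing isogenies $\psi_i:E_1\to E_i$ and applying the identification in~(\ref{eq: Ei to E1}) produces an injective $\Q$-algebra homomorphism
\[
\Phi\colon K\hookrightarrow M_3(\End^0 E_1) = M_3(B_{p,\infty}),
\]
on which, by Proposition~\ref{prop: Rosati2}, the Rosati involution acts as $(f_{ij})\mapsto (f_{ji}^\vee)$. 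Set $A=\Phi(\sqrt{\alpha})$, so that $A^2=\Phi(\alpha)$ and $A^\ast=-A$ (because complex conjugation on $K$ sends $\sqrt{\alpha}$ to $-\sqrt{\alpha}$).

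Writing $A=(a_{ij})$ with $a_{ij}\in B_{p,\infty}$, the relation $A^\ast=-A$ says $a_{ji}=-a_{ij}^\vee$, so each diagonal entry is a pure quaternion. Expanding,
\[
(A^2)_{ii}=\sum_j a_{ij}a_{ji}=-\sum_j a_{ij}a_{ij}^\vee=-\sum_j\Nrd(a_{ij})\in\Q_{\le 0}.
\]
Since the diagonal of $A^2$ is rational, comparing matrix and reduced traces yields $\sum_i(A^2)_{ii}=\tfrac12\trd(A^2)=\tfrac12\Tr_{K/\Q}(\alpha)=T$, whence
\[
\sum_{i,j}\Nrd(a_{ij})=|T|. \qquad(\star)
\]
Independently, after extending scalars to $\overline{\Q}$, the element $A$ becomes a $6\times 6$ matrix whose eigenvalues are the six complex embeddings of $\sqrt{\alpha}$, namely $\pm i\sqrt{|\chi_j(\alpha)|}$ for the three real embeddings $\chi_j$ of $K^+$. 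Hence $|\Nrd_{M_3(B_{p,\infty})/\Q}(A)|=|N_{K^+/\Q}(\alpha)|$, and AM--GM applied to the positive reals $-\chi_j(\alpha)$, whose sum is $|T|$, yields
\[
|\Nrd(A)|=|N_{K^+/\Q}(\alpha)|\le (|T|/3)^3=|T|^3/27.
\]

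Under the hypothesis $p>4|T|^6/3^6$ this rearranges to $\sqrt{p}/2>|T|^3/27\ge|\Nrd(A)|$. The goal is now to combine this bound with the identity $(\star)$ and with Lemma~\ref{smallnorms} to force the entries of $A$ to lie in a common commutative subring of $B_{p,\infty}$. I would do this by writing down the analogous identities of type $(\star)$ for the skew-Hermitian matrices $\Phi(\beta^j\sqrt{\alpha})$ where $\beta$ generates $K^+/\Q$ and $j=0,1,2$, and then exploiting the multiplicativity of $\Nrd$ on $B_{p,\infty}$ applied to products of entries---these products are exactly the entries of $A^2$, $A^3$, and higher powers. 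A pigeonhole argument on such products should extract enough small-norm elements so that Lemma~\ref{smallnorms} forces them to pairwise commute, ultimately placing every $a_{ij}$ in a single maximal commutative subring $F\subseteq B_{p,\infty}$ of $\Q$-dimension at most $2$.

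Once $A\in M_3(F)$, the $\Q$-subalgebra $\Q[A]\subset M_3(F)$ is isomorphic to $K$ (as $\sqrt{\alpha}$ generates $K/\Q$), and extending scalars gives an injective $F$-algebra map $K\otimes_\Q F\hookrightarrow M_3(F)$. Because $K$ contains no imaginary quadratic subfield (by hypothesis) and no real quadratic subfield (any such would lie in $K^+$, of degree $3$), the field $F$ does not embed in $K$; consequently $K\otimes_\Q F$ is a field of $F$-dimension $6$, which contradicts the fact that every commutative $F$-subalgebra of $M_3(F)$ has $F$-dimension at most $3$. The main obstacle is the previous paragraph: the sum identity $(\star)$ alone yields only the much weaker bound $p>4|T|^2$, and matching the exponent $6$ requires exploiting multiplicativity of $\Nrd$ through products of entries (guided by the global bound $|\Nrd(A)|\le|T|^3/27$) rather than simply summing individual norms.
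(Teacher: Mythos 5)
Your skeleton (the skew-Hermitian relation $A^\ast=-A$, the trace identity controlling the sum of the degrees of the entries, Lemma~\ref{smallnorms}, and the final contradiction with $K$ having no imaginary quadratic subfield, which is the paper's Lemma~\ref{lem: noncommutative}) matches the paper's strategy, and you correctly diagnose that summing individual norms only yields a bound of the shape $p\le 4T^2$, so that the exponent $6$ must come from the multiplicativity of the reduced norm on \emph{products} of entries. But that is precisely the step you do not carry out: ``a pigeonhole argument on such products should extract enough small-norm elements'' is not an argument, and your global bound $|\Nrd(A)|\le |T|^3/27$ cannot be distributed among the nine entries of $A$ --- the reduced norm of a matrix over $B_{p,\infty}$ is not determined by the norms of its entries, while Lemma~\ref{smallnorms} requires \emph{individual} elements of a fixed maximal order to have reduced norm $<\sqrt{p}/2$. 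There is also an integrality issue you pass over: the entries of $\Phi(\cO)$ lie only in $\End^0 E_1$, not in $\End E_1$, so denominators must be cleared before the maximal-order lemma applies. (A minor further slip: for your conjugated matrix one has $a_{ji}=-(\delta_i/\delta_j)a_{ij}^\vee$ rather than $a_{ji}=-a_{ij}^\vee$, so $(\star)$ should be stated with $\deg(Q_{ij})$ rather than $\Nrd(a_{ij})$; the content survives.)

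The paper closes the gap with a specific device. First (Lemma~\ref{lem: st nonzero}) it shows, via a block-diagonal eigenvalue argument, that after permuting the $E_i$ the entries $s=Q_{1,2}$ and $t=Q_{1,3}$ may be assumed nonzero, and it then takes the conjugating isogenies to be $s^\vee$ and $t^\vee$ themselves rather than arbitrary $\psi_i$. The resulting matrix $T$ in \eqref{eq: matrix T} has $(2,1)$- and $(3,1)$-entries equal to $-1$ and integer $(1,2)$-, $(1,3)$-entries $\delta_2=\deg s$, $\delta_3=\deg t$; after rescaling by $\mathrm{diag}(1,\delta_2,\delta_3)$ the remaining non-integer entries become $svs^\vee$, $swt^\vee$, $tzt^\vee$ (and a dual), whose reduced norms are the triple products $\deg(s)^2\deg(v)$, $\deg(s)\deg(t)\deg(w)$, $\deg(t)^2\deg(z)$. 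Each factor is constrained by the linear identity \eqref{eq: sum of norms}, and maximizing $x^2(a-2x)$ gives each product $<(a/3)^3=\sqrt{p}/2$ for $a=3\sqrt[6]{p/4}$ --- this is where the sixth power enters, by exactly the AM--GM mechanism you glimpsed in your bound on $|\Nrd(A)|$, but applied to honest entries of an integral matrix. To complete your write-up you would need to supply (i) the nonvanishing of the relevant off-diagonal entries, (ii) a choice of conjugation tied to those entries so that the entry norms become triple products of degrees, and (iii) the integrality of the rescaled matrix; your remaining steps then go through.
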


We break down the proof of Theorem \ref{thm:no embedding} into several lemmas.

Let $$Q=\begin{pmatrix}\
 r & s & t  \\
 u & v & w  \\
 x & y & z
\end{pmatrix}
$$ be the image of $\sqrt{\alpha}$ in $\End(E_1\times E_2\times E_3)$. By Proposition \ref{prop: Rosati}, the Rosati involution corresponds to complex conjugation on $K$, so we have
\begin{equation}
\label{Rosati}
\begin{pmatrix}\
 r^{\vee} & u^{\vee} & x^{\vee} \\
 s^{\vee} & v^{\vee} & y^{\vee}  \\
 t^{\vee} & w^{\vee} & z^{\vee}
\end{pmatrix}=\begin{pmatrix}\
 -r & -s &- t  \\
 -u & -v & -w  \\
 -x & -y & -z
\end{pmatrix} .
\end{equation}

\begin{lemma}
\label{lem: st nonzero}
We may assume that the homomorphisms $s: E_2\rightarrow E_1$ and $t:E_3\rightarrow E_1$ are both nonzero.
\end{lemma}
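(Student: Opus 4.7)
The plan is to use the Rosati relation~(\ref{Rosati}) to show that the off-diagonal zero pattern of the matrix $Q$ representing $\sqrt{\alpha}$ is symmetric, and then to argue by relabelling the factors $E_1, E_2, E_3$. From~(\ref{Rosati}) we read off $Q_{ji}^{\vee} = -Q_{ij}$, and since an isogeny is zero iff its dual is zero, $Q_{ij} = 0$ if and only if $Q_{ji} = 0$. Form the undirected graph $\Gamma$ on vertex set $\{1,2,3\}$ with an edge $\{i,j\}$ whenever $Q_{ij} \neq 0$. The conclusion $s = Q_{12} \neq 0$ and $t = Q_{13} \neq 0$ is equivalent to vertex $1$ having degree $2$ in $\Gamma$; after permuting the labels of the three factors --- which replaces $Q$ by its conjugate under a permutation matrix --- this reduces to showing that $\Gamma$ has some vertex of degree $2$.

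On three vertices, the only way to fail is for $\Gamma$ to have zero or one edges: with two edges the shared endpoint has degree $2$, and with three edges every vertex does. So the remaining cases to rule out are those where, after permuting indices, $Q$ is block diagonal with block pattern either $1{+}1{+}1$ (zero edges) or $1{+}2$ (one edge, with the isolated vertex giving the $1$-block).

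The key step is to derive a contradiction in these degenerate cases. Since $[K^+:\Q]=3$ is prime and $\alpha \in K^+ \setminus \Q$, the element $\alpha$ generates $K^+$ over $\Q$; then from $(\sqrt{\alpha})^2 = \alpha$ it follows that $K = \Q(\sqrt{\alpha})$. Consequently the image of $K$ inside $\End^0(E_1 \times E_2 \times E_3)$ is just $\Q[Q]$, and this subring respects the same block decomposition as $Q$. Hence we obtain an injective $\Q$-algebra homomorphism $K \hookrightarrow A \times B$ where $A = \End^0 E_i$ for some $i \in \{1,2,3\}$. Projecting onto $A$ yields a unital (hence nonzero, hence injective since $K$ is a field) ring homomorphism $K \hookrightarrow \End^0 E_i$. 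But $\End^0 E_i$ has $\Q$-dimension at most $4$, while $[K:\Q]=6$, a contradiction. This forces $\Gamma$ to have a vertex of degree $2$, and relabelling it as vertex $1$ gives $s, t \neq 0$.

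I do not anticipate a serious obstacle. The one point worth verifying carefully is that permuting the three factors genuinely yields another valid solution to the embedding problem for the same $\cO$ and $p$: this follows because the product polarization on $E_1\times E_2\times E_3$ is invariant under permutations of the factors, so conjugating $Q$ by a permutation matrix preserves the condition that the Rosati involution induces complex conjugation on the image of $\cO$.
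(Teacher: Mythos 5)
Your proof is correct, but the way you derive the contradiction in the degenerate case differs from the paper's. Both arguments agree on the combinatorial frame: the Rosati relation forces the zero pattern of $Q$ to be symmetric, and one only has to rule out the configuration in which some $E_i$ is ``disconnected'' from the other two factors (the paper phrases this as showing no row of $Q$ can have both off-diagonal entries zero, then cycling through $E_1,E_2,E_3$ and reordering; your graph on three vertices is the same bookkeeping). Where you diverge is the key step. The paper assumes $s=t=0$, passes to the image $S$ of $\alpha=(\sqrt{\alpha})^2$ in $M_3(\End^0 E_1)$ via the isogenies $\psi_i$, checks that the entries of the resulting block-diagonal matrix commute and hence generate a field $L\subset \End^0 E_1$, and then observes that the integer $-rr^{\vee}=-\deg(r)$ is an eigenvalue of $S$, so the irreducible cubic minimal polynomial of $\alpha$ over $\Q$ would acquire a rational root --- a contradiction. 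You instead note that a block decomposition of $Q$ is inherited by $\Q[Q]$, which is the full image of $K=\Q(\sqrt{\alpha})$, so projecting onto the isolated block gives a unital, hence injective, map $K\hookrightarrow \End^0 E_i$; this is impossible since $\dim_\Q \End^0 E_i\le 4<6=[K:\Q]$. Your route is shorter: it avoids the transfer to $M_3(\End^0 E_1)$ and the verification that the entries of $S$ commute, and it is the same dimension-count device the paper itself uses in Lemma \ref{lem:supersingular}. The paper's route has the mild advantage of extracting its contradiction from the cubic element $\alpha$ alone (it pinpoints which integer would have to be a root), but for the purposes of this lemma the two are interchangeable, and your observation that permuting the factors preserves the product polarization, and hence the Rosati condition, correctly justifies the relabelling step that both arguments rely on.
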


\begin{proof}
Suppose for contradiction that both $s$ and $t$ are zero. Then the image of $\alpha$ in $\End(E_1\times E_2\times E_3)$ is
$$Q^2=\begin{pmatrix}\
 -rr^{\vee} & 0 & 0  \\
 0 & -vv^{\vee}-ww^{\vee} & vw+wz  \\
 0 & -w^{\vee}v-zw^{\vee} & -w^{\vee}w-zz^{\vee}
\end{pmatrix}.$$
For $i=2,3$, let $\psi_i:E_1\rightarrow E_i$ be an isogeny of degree $\delta_i$. As seen in \eqref{eq: Ei to E1}, the $\psi_i$ induce an injective $\mathbb{Q}$-algebra homomorphism $\End^0(E_1\times E_2\times E_3)\rightarrow \End^0(E_1\times E_1\times E_1)=M_3(\End^0 E_1)$ sending
$Q^2$ to
$$S=\begin{pmatrix}\
 -rr^{\vee} & 0 & 0  \\
 0 & -vv^{\vee}-ww^{\vee} & \delta_2^{-1}\psi_2^{\vee}(vw+wz)\psi_3  \\
 0 & \delta_3^{-1}\psi_3^{\vee}(-w^{\vee}v-zw^{\vee})\psi_2 & -w^{\vee}w-zz^{\vee}
\end{pmatrix} .$$
Since $(vw+wz)^{\vee}=-w^{\vee}v-zw^{\vee}$, the entries of $S$ commute and therefore form a subfield $L$ of $\End^0 E_1$.
Since $S$ is the image of $\alpha$ under an injective $\mathbb{Q}$-algebra homomorphism, the minimal polynomial of $S$ over $L$ divides the minimal polynomial of $\alpha$ over $\mathbb{Q}$. Recall that $rr^{\vee}\in\mathbb{Z}$ is the degree of $r$. Now $-rr^{\vee}$ is an eigenvalue of $S$ and therefore a root of its minimal polynomial. But this means that the minimal polynomial of $\alpha$ over $\mathbb{Q}$ has a root in $\mathbb{Z}$, contradicting its irreducibility.

Therefore, at least one of $s,t$ is nonzero. Using $E_2$ in place of $E_1$, we see that at least one of $s,w$ is nonzero. Using $E_3$ in place of $E_1$, we see that at least one of $t,w$ is nonzero. Putting all these conditions together and reordering the elliptic curves $E_1, E_2, E_3$ if necessary, we may assume that $s$ and $t$ are both nonzero.
\end{proof}

Henceforth, we assume that $s$ and $t$ are nonzero.
Therefore, we can use $s^{\vee}$ and $t^{\vee}$ to give an injective $\mathbb{Q}$-homomorphism $\End^0(E_1\times E_2\times E_3)\hookrightarrow \End^0(E_1\times E_1\times E_1)$ as in \eqref{eq: Ei to E1}. The image of $\sqrt{\alpha}$ in $M_3(\End^0 E_1)$ is

\begin{equation}
\label{eq: matrix T}
T=\begin{pmatrix}\
 r & \delta_2 & \delta_3  \\
 -1 & svs^{\vee}/\delta_2 & swt^{\vee}/\delta_2  \\
 -1 & -tw^{\vee}s^{\vee}/\delta_3 & tzt^{\vee}/\delta_3
\end{pmatrix},
\end{equation}
where $\delta_2=\deg(s)$ and $\delta_3=\deg(t)$. 

Since $K$ contains no imaginary quadratic subfield,
Lemma \ref{lem:supersingular} shows that the elliptic curves $E_1,
E_2$ and $E_3$ are supersingular. By Proposition \ref{prop:quaternion
  algebra}, we may choose an isomorphism $\End^0 E_1\rightarrow
B_{p,\infty}$. The isomorphism sends $\End E_1$ to a maximal order of
$B_{p,\infty}$ and the Rosati involution on $\End E_1$ corresponds to the usual
involution on $B_{p,\infty}$.  We abuse notation slightly by
continuing to write $T$ for the image of $\sqrt{\alpha}$ in
$M_3(B_{p,\infty})$.

\begin{lemma}
\label{lem: noncommutative}
Suppose that $K$ contains no imaginary quadratic subfield.
Let $T$ denote the image of $\sqrt{\alpha}$ in $M_3(B_{p,\infty})$. Then the entries of the matrix $T$ do not all commute with each other.
\end{lemma}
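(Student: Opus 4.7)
The plan is to argue by contradiction: suppose that the entries of $T$ all pairwise commute in $B_{p,\infty}$, and derive an absurd dimension inequality.

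Let $L'$ denote the $\mathbb{Q}$-subalgebra of $B_{p,\infty}$ generated by the entries of $T$. Under the commutativity hypothesis $L'$ is a commutative subring of the quaternion division algebra $B_{p,\infty}$, hence is itself a field. Since $B_{p,\infty}$ is a central simple division algebra of $\mathbb{Q}$-dimension $4$, every commutative subfield has degree at most $2$ over $\mathbb{Q}$; moreover, in the case $[L':\mathbb{Q}]=2$, the field $L'$ must be imaginary quadratic because $B_{p,\infty}$ is ramified at infinity. Since $T\in M_3(L')$, its minimal polynomial over $L'$ has degree at most $3$, so the commutative $\mathbb{Q}$-algebra $L'[T]\subseteq M_3(L')$ satisfies $\dim_{\mathbb{Q}} L'[T]\le 3[L':\mathbb{Q}]$.

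The next step exploits the injection $\Phi: K\hookrightarrow M_3(B_{p,\infty})$ from Section~\ref{sec:endo} sending $\sqrt{\alpha}$ to $T$, which identifies $\Phi(K)$ with $\mathbb{Q}[T]$. Since the scalar matrices with entries in $L'$ commute with $T$, the ring $L'[T]$ contains $L'$ and $\Phi(K)$ as commuting subalgebras, so I obtain a $\mathbb{Q}$-algebra homomorphism
\[
\pi: L'\otimes_{\mathbb{Q}}K \longrightarrow L'[T],\qquad \ell\otimes k\mapsto \ell\cdot\Phi(k).
\]
I claim $L'\otimes_{\mathbb{Q}}K$ is a field. For $L'=\mathbb{Q}$ this is immediate. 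For $[L':\mathbb{Q}]=2$, the hypothesis that $K$ contains no imaginary quadratic subfield forces $L'\not\subseteq K$; decomposing the \'etale algebra $L'\otimes_{\mathbb{Q}}K$ as a product of fields, each factor receives injective unital maps from both $L'$ and $K$ (as these are fields) and therefore contains the compositum $L'K$ of degree $12$. Since the $\mathbb{Q}$-dimensions of the factors sum to $12$, there is only one factor and $L'\otimes_{\mathbb{Q}}K\cong L'K$ is a field of dimension $6[L':\mathbb{Q}]$. As a nonzero map from a field, $\pi$ is then injective.

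Combining the two bounds yields $6[L':\mathbb{Q}]\le\dim_{\mathbb{Q}} L'[T]\le 3[L':\mathbb{Q}]$, a contradiction. The only substantive step — and the main obstacle — is the verification that $L'\otimes_{\mathbb{Q}}K$ is a field in the quadratic case; this is precisely where the assumption that $K$ contains no imaginary quadratic subfield enters, ensuring that the quadratic subfield $L'\subset B_{p,\infty}$ cannot already live inside $K$.
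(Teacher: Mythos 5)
Your proof is correct and follows essentially the same route as the paper's: identify the commutative subfield of $B_{p,\infty}$ generated by the entries, observe it is $\Q$ or imaginary quadratic because $B_{p,\infty}$ is ramified at infinity, use the absence of an imaginary quadratic subfield of $K$ to see that $K$ stays degree $6$ over it, and contradict the degree-$\leq 3$ bound coming from Cayley--Hamilton for a $3\times 3$ matrix. The paper phrases the last two steps via irreducibility of the minimal polynomial of $\sqrt{\alpha}$ over $K_1$ and its degree-$\leq 3$ divisor, whereas you package them as injectivity of $L'\otimes_{\Q}K\to L'[T]$ plus a dimension count, but the content is identical.
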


\begin{proof}
Suppose for contradiction that the entries of $T$ commute. Let $K_1$ denote the subfield of $B_{p,\infty}$ generated by the entries of $T$. A subfield of $B_{p,\infty}$ is either $\mathbb{Q}$ or a quadratic subfield which splits $B_{p,\infty}$. But $B_{p,\infty}$ is ramified at the infinite place, so it is not split by any real field. Thus, $K_1$ is either $\mathbb{Q}$ or an imaginary quadratic field. By assumption, $K$ contains no imaginary quadratic subfield. Thus, the minimal polynomial of $\sqrt{\alpha}$ over $\mathbb{Q}$ remains irreducible over $K_1$.

Let $g$ denote the minimal polynomial of $T$ over $K_1$. The degree of $g$ is at most $3$. Since $T$ is the image of $\sqrt{\alpha}$ under an injective $\mathbb{Q}$-algebra homomorphism, $g$ divides the minimal polynomial of $\sqrt{\alpha}$ over $\mathbb{Q}$, which has degree $6$. Thus, the minimal polynomial of $\sqrt{\alpha}$ over $\mathbb{Q}$ factorizes over $K_1$, giving the required contradiction.
\end{proof}

We restrict to the case where $p$ is odd; the case $p=2$ is very similar. By Proposition \ref{prop:quaternion algebra}, $B_{p,\infty}$ has a $\mathbb{Q}$-basis $1,i,j,k$ where $i^2=-\varepsilon$, $j^2=-p$ , $ij=k$, $ji=-ij$ and $\varepsilon$ is as in Proposition \ref{prop:quaternion algebra}.
We embed $B_{p,\infty}$ into $M_4(\mathbb{Q})$ via
$$
1\mapsto \begin{pmatrix} 1 & 0&0&0\\ 0&1&0&0\\0&0&1&0\\0&0&0&1\end{pmatrix},\,
i\mapsto\begin{pmatrix} 0 & -\varepsilon&0&0\\ 1&0&0&0\\0&0&0&-\varepsilon\\0&0&1&0\end{pmatrix},\,
j\mapsto \begin{pmatrix} 0 & 0&-p&0\\ 0&0&0&p\\1&0&0&0\\0&-1&0&0\end{pmatrix} ,\,
k\mapsto  \begin{pmatrix} 0 & 0&0&-\varepsilon p\\ 0&0&-p&0\\0&\varepsilon&0&0\\1&0&0&0\end{pmatrix}.
$$
This induces an embedding $M_3(B_{p,\infty})\hookrightarrow M_{12}(\mathbb{Q})$. Let $U$ denote the image of $\alpha$ in $M_{12}(\mathbb{Q})$. Write $\Tr(T^2)$ for the sum of the elements on the  diagonal
of $T^2$. Define $\Tr(Q^2)$ in the same way. It is easily checked that $\Tr(T^2)=\Tr(Q^2)$.
By the construction of the embedding $B_{p,\infty}\hookrightarrow M_4(\mathbb{Q})$, we have
\begin{equation}
\label{eq:TrU}
\Tr(U)=4\Tr(T^2).
\end{equation}

\begin{lemma}
\label{lem: Tr(alpha)}
Let $T$ denote the image of $\sqrt{\alpha}$ in $M_3(B_{p,\infty})$. Then $\Tr(T^2)=\Tr_{K^+/\Q}(\alpha)$.
\end{lemma}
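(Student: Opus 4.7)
The matrix $T\in M_3(B_{p,\infty})$ is, by construction in Section~\ref{sec:endo}, the image of $\sqrt{\alpha}$ under the injective $\mathbb{Q}$-algebra homomorphism
\[
\Phi: K\hookrightarrow \End^0(E_1\times E_2\times E_3)\hookrightarrow M_3(\End^0 E_1)\cong M_3(B_{p,\infty}).
\]
Since $\Phi$ is a ring homomorphism, $T^2=\Phi(\alpha)$. My plan is to compute the rational number $\Tr(U)$ in two ways and compare.

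The first computation is already recorded in equation~\eqref{eq:TrU}: $\Tr(U)=4\Tr(T^2)$. For the second computation, I would use that $U\in M_{12}(\mathbb{Q})$ represents left multiplication by $\Phi(\alpha)$ on $B_{p,\infty}^{\,3}$, viewed as a $12$-dimensional $\mathbb{Q}$-vector space. The embedding $\Phi$ endows $B_{p,\infty}^{\,3}$ with the structure of a left $K$-module, and because $K$ is a field any such module is automatically free; comparing $\mathbb{Q}$-dimensions shows that its $K$-rank equals $12/[K:\mathbb{Q}]=2$, so $B_{p,\infty}^{\,3}\cong K^2$ as $K$-modules. Under this identification, multiplication by $\alpha$ acts as the scalar matrix $\alpha\cdot I_2$, whose trace as a $\mathbb{Q}$-linear endomorphism of $K^2$ is $2\,\Tr_{K/\mathbb{Q}}(\alpha)$. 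Since $\alpha\in K^+$ and $[K:K^+]=2$, we have $\Tr_{K/\mathbb{Q}}(\alpha)=2\,\Tr_{K^+/\mathbb{Q}}(\alpha)$, and therefore $\Tr(U)=4\,\Tr_{K^+/\mathbb{Q}}(\alpha)$.

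Combining the two expressions for $\Tr(U)$ yields $\Tr(T^2)=\Tr_{K^+/\mathbb{Q}}(\alpha)$, as required. There is no real obstacle here: the argument is a basis-free trace computation that sidesteps the messy explicit formula for the diagonal entries of $T^2$ coming from \eqref{eq: matrix T}. The only point that requires any care is verifying that the $K$-module structure on $B_{p,\infty}^{\,3}$ induced by $\Phi$ is the one whose $\alpha$-multiplication matches the matrix $U$ in $M_{12}(\mathbb{Q})$, and this is immediate from the construction of $U$ as the image of $\Phi(\alpha)$ under the block-embedding $M_3(B_{p,\infty})\hookrightarrow M_{12}(\mathbb{Q})$.
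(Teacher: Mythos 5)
Your argument is correct, and it takes a genuinely different route from the paper's. The paper computes $\Tr(U)$ by writing the characteristic polynomial of $U$ as $(X-\alpha_1)^{m_1}(X-\alpha_2)^{m_2}(X-\alpha_3)^{m_3}$, so that $\Tr(U)=m_1\alpha_1+m_2\alpha_2+m_3\alpha_3\in\Q$, and then proves $m_1=m_2=m_3=4$ by a somewhat delicate Galois-theoretic contradiction: if the $m_i$ were not all equal, rationality of the trace would force $\alpha_3=\lambda\alpha_2$ with $\lambda\in\Q$ a root of unity, hence $\alpha_3=-\alpha_2$ and $\alpha\in\Q$, a contradiction. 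Your module-theoretic computation replaces all of this: since $\Phi$ makes $B_{p,\infty}^{\,3}$ a $K$-vector space of dimension $12/6=2$, multiplication by $\alpha$ has $\Q$-trace $2\Tr_{K/\Q}(\alpha)=4\Tr_{K^+/\Q}(\alpha)$, and comparison with $\Tr(U)=4\Tr(T^2)$ finishes the proof. This is cleaner, and as a by-product it recovers the paper's multiplicity statement (the characteristic polynomial of $U$ is the fourth power of the minimal polynomial of $\alpha$). The one point you flag as needing care is genuinely the crux, and it does check out: the explicit $4\times4$ matrices assigned to $1,i,j,k$ in the paper are exactly the left regular representation of $B_{p,\infty}$ on itself in that basis (e.g.\ left multiplication by $i$ sends $1,i,j,k$ to $i,-\varepsilon,k,-\varepsilon j$, matching the displayed matrix), so $U$ is indeed the matrix of left multiplication by $\Phi(\alpha)$ on $B_{p,\infty}^{\,3}\cong\Q^{12}$, which is what identifies $\Tr(U)$ with the trace of the $\Q$-linear map $v\mapsto\alpha v$ on the $K$-vector space $K^2$. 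In a written version you should make that verification explicit rather than calling it immediate, but there is no gap.
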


\begin{proof}
Let $\alpha=\alpha_1,\alpha_2,\alpha_3$ denote the conjugates of $\alpha$. The characteristic polynomial of $U$ is $(X-\alpha_1)^{m_1}(X-\alpha_2)^{m_2}(X-\alpha_3)^{m_3}$ for some $m_1, m_2, m_3\in\mathbb{Z}_{> 0}$ with $m_1+m_2+m_3=12$. The trace of $U$ is $m_1\alpha_1+m_2\alpha_2+m_3\alpha_3\in\mathbb{Q}$.
If we can show that $m_1=m_2=m_3=4$, then equation \eqref{eq:TrU} gives
\begin{equation}
4\Tr(T^2)=\Tr(U)=m_1\alpha_1+m_2\alpha_2+m_3\alpha_3=4(\alpha_1+\alpha_2+\alpha_3)=4\Tr_{K^+/\Q}(\alpha).
\end{equation}
Therefore, it is enough to show that $m_1=m_2=m_3$. 
Since $\alpha\in\cO_{K^+}$, we have $\alpha_1+\alpha_2+\alpha_3\in\mathbb{Z}$ and therefore $(m_2-m_1)\alpha_2+(m_3-m_1)\alpha_3\in\mathbb{Q}$. Suppose for contradiction that we are not in the case $m_1=m_2=m_3$. Then, without loss of generality, $(m_2-m_1)\neq 0$ and since $\alpha_2\notin\Q$ it follows that $(m_3-m_1)\neq 0$. Therefore, $\alpha_3=\lambda\alpha_2$ for some $\lambda\in\mathbb{Q}$. But $\alpha_3$ is a Galois conjugate of $\alpha_2$ and the Galois group of the Galois closure of $K^+/\mathbb{Q}$ is either $C_3$ or $S_3$. Therefore, the automorphism sending $\alpha_2$ to $\alpha_3$ has order dividing $6$ and hence $\lambda$ is a sixth root of unity in $\mathbb{Q}$. Therefore, $\lambda=-1$ and $\alpha_3=-\alpha_2$. But this gives $\Tr_{K^+/\Q}(\alpha)=\alpha_1+\alpha_2+\alpha_3=\alpha_1$.  So $\alpha =\alpha_1 =  \Tr_{K^+/\Q} (\alpha)\in\mathbb{Q}$, which is a contradiction.
\end{proof}

\begin{proof}[Proof of Theorem \ref{thm:no embedding}]
Suppose for contradiction that $p>4\Tr_{K^+/\Q}(\alpha)^6/3^6$. We will show that the entries of the matrix $T$ commute, contradicting Lemma \ref{lem: noncommutative}. The key ingredients will be Lemma \ref{smallnorms} (which states that elements of a maximal order whose reduced norms are smaller than $\sqrt{p}/2$ commute) and equation \eqref{eq: sum of norms} below.

Recall that \begin{equation}
\label{eq: matrix T repeat}
T=\begin{pmatrix}\
 r & \delta_2 & \delta_3  \\
 -1 & svs^{\vee}/\delta_2 & swt^{\vee}/\delta_2  \\
 -1 & -tw^{\vee}s^{\vee}/\delta_3 & tzt^{\vee}/\delta_3
\end{pmatrix}
\end{equation}
where $\delta_2=\deg(s)$ and $\delta_3=\deg(t)$. We have
$$\begin{pmatrix}
1& 0 & 0 \\
0& \delta_2 & 0 \\
0& 0 & \delta_3
\end{pmatrix} T\in M_3(\End E_1).$$
We have chosen an isomorphism $\End^0 E_1\rightarrow B_{p,\infty}$, sending $\End E_1$ to a maximal order of $B_{p,\infty}$. The dual on $\End E_1$ corresponds to the usual involution on $B_{p,\infty}$. We identify $\End^0 E_1$ with $B_{p,\infty}$ and write $\Nrd(f)=\deg(f)=ff^{\vee}$ for $f\in\End E_1$.

By Lemma \ref{lem: Tr(alpha)}, we have $\Tr(T^2)=\Tr_{K^+/\Q}(\alpha)$. Writing out the entries on the  diagonal of $T^2$ gives
\begin{equation}
\label{eq: sum of norms}
0< \deg(r)+2\deg(s)+2\deg(t)+\deg(v)+2\deg(w)+\deg(z)=-\Tr_{K^+/\Q}(\alpha)<3\sqrt[6]{p/4}.
\end{equation}
Note that the sum of degrees is a sum of non-negative integers.
We want to use \eqref{eq: sum of norms} to bound the reduced norms of the non-scalar entries of 
$\begin{pmatrix}
1& 0 & 0 \\
0& \delta_2 & 0 \\
0& 0 & \delta_3
\end{pmatrix} T.$
 Recall that, in light of Lemma \ref{lem: st nonzero}, we are assuming that $s$ and $t$ are nonzero. Therefore, $\deg(s),\deg(t)\geq 1$ and \eqref{eq: sum of norms} gives
\begin{enumerate}
\item[i)] $\Nrd(r)=\deg(r)<3\sqrt[6]{p/4}-4<\sqrt{p}/2$,
\item[ii)]$2\deg(s)+\deg(v)< 3\sqrt[6]{p/4}$,
\item[iii)]  $2(\deg(s)+\deg(t)+\deg(w))< 3\sqrt[6]{p/4}$,
\item[iv)]$2\deg(t)+\deg(z)< 3\sqrt[6]{p/4}$.
\end{enumerate}
Observe that $\Nrd(swt^{\vee})=\deg(s)\deg(w)\deg(t)=\Nrd(-tw^{\vee}s^{\vee})$. So it remains to bound the reduced norms of $svs^{\vee}$, $swt^{\vee}$ and $tzt^{\vee}$.
Let $a\in\R_{>0}$. The maximum of the function $f(x)=x^2(a-2x)$ for $x\geq 0$ is achieved at $x=a/3$ and we have $f(a/3)=(a/3)^3$.
Applying this to ii) with $a=3\sqrt[6]{p/4}$, we see that
$$\Nrd(svs^{\vee})=\deg(s)^2\deg(v)<(\sqrt[6]{p/4})^3=\sqrt{p}/2.$$
Similarly, using iv) we get $$\Nrd(tzt^{\vee})=\deg(t)^2\deg(z)<(\sqrt[6]{p/4})^3=\sqrt{p}/2.$$
 Using iii), we get
\begin{eqnarray*}
\Nrd(swt^{\vee})=\deg(s)\deg(w)\deg(t)\leq(\deg(s)+\deg(w))^22\deg(t)
<(\sqrt[6]{p/4})^3=\sqrt{p}/2.
\end{eqnarray*}
Therefore, by Lemma \ref{smallnorms}, the entries of $\begin{pmatrix}
1& 0 & 0 \\
0& \delta_2 & 0 \\
0& 0 & \delta_3
\end{pmatrix} T$ commute. Since the entries of $\begin{pmatrix}
1& 0 & 0 \\
0& \delta_2 & 0 \\
0& 0 & \delta_3
\end{pmatrix} T$ are just scalar multiples of the entries of $T$, this means that the entries of $T$ commute. But this contradicts Lemma \ref{lem: noncommutative}. Therefore, the assumption $p>4\Tr_{K^+/\Q}(\alpha)^6/3^6$ does not hold.
\end{proof}

\subsection{Solutions to the embedding problem in the case that $K$ contains an
 imaginary quadratic subfield} 
\label{subsec:degenerate}

In this section, we consider the case where the sextic CM-field $K$
contains an imaginary quadratic subfield $K_1$. We show that the
embedding problem \ref{problem:embedding} has solutions for
every prime $p$ (Corollary \ref{cor:embedding}). The solutions are
constructed via the reduction at $p$ of a CM-abelian variety $A=E^3$ in
characteristic zero, where $E$ is an elliptic curve. In particular,
the CM-type of $A$ is imprimitive (Theorem \ref{thm: primsimple}). The
solutions we construct may therefore be called {\sl degenerate solutions} to the embedding problem.

The point is that if $K$ is a CM-field which contains an imaginary
quadratic subfield then there always exist imprimitive CM-types for
$K$. This is what allows for the existence of degenerate solutions to
the embedding problem.  Recall from Corollary \ref{cor: case3} that
there do not exist imprimitive CM-types $(K, \varphi)$ for CM-fields
that do not contain a proper CM-subfield.

The proof of Theorem \ref{thm:bnd_on_p} relied on showing
non-existence of solutions of the embedding problem for sufficiently
large primes (Theorem \ref{thm:no embedding}) in the case where the
sextic CM-field contains no proper CM-subfield. In contrast, if $C$ is
a curve whose Jacobian has CM by a sextic CM-field $K$ which contains
an imaginary quadratic field, then this strategy breaks down because
there the embedding problem has degenerate solutions for all primes
$p$ (Corollary \ref{cor:embedding}). 
The embedding problem, as formulated in Problem
\ref{problem:embedding}, does not take the CM-type into
consideration. It may be possible to prove an analogous result to
Theorem \ref{thm:bnd_on_p}, in the case that $K$ contains a proper
CM-subfield, using a more refined formulation of the embedding problem
that includes the CM-type as part of the data.

 \begin{proposition}
\label{prop:embedding existence}
Let $K$ be a sextic CM-field containing a proper CM subfield
$K_1$. Let $E$ be an elliptic curve over an arbitrary field and
suppose that there exists an embedding $K_1\hookrightarrow \End^0
(E)$.
Then there exists an order $\calO$ of $K$ and a ring embedding
\[\calO\hookrightarrow \End(E^3)= M_3(\End(E))\]
such that the Rosati involution on $\End(E^3)$ corresponding to the
product polarization on $A=E^3$ induces complex conjugation on $\cO$.
\end{proposition}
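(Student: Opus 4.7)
The plan is to construct a $\Q$-algebra embedding $\iota\colon K \hookrightarrow M_3(\End^0(E))$ such that complex conjugation on $K$ corresponds to the Rosati involution, and then set $\calO := \iota^{-1}(M_3(\End(E)))$. Since $M_3(\End(E))$ is a $\Z$-order in $M_3(\End^0(E))$ and $\iota$ is a $\Q$-algebra embedding, $\calO$ will be an order of $K$ with the required property. Because $K$ contains a proper CM-subfield, Proposition~\ref{prop:classification} lets us take $K_1$ to be an imaginary quadratic subfield and write $K = K_1 K^+$ with $K^+$ the totally real cubic subfield. The embedding $K_1\hookrightarrow\End^0(E)$ induces a natural embedding $M_3(K_1)\hookrightarrow M_3(\End^0(E))$, and by Proposition~\ref{prop: Rosati2} the Rosati involution restricts on $M_3(K_1)$ to the conjugate transpose $(a_{ij})\mapsto(\overline{a_{ji}})$. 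It therefore suffices to find a $\Q$-algebra embedding $\iota\colon K \hookrightarrow M_3(K_1)$ with $\iota(\bar x)=\iota(x)^*$ for all $x\in K$, where $*$ denotes conjugate transpose.

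To do this, I would view $K$ as a three-dimensional $K_1$-vector space. For each totally positive $\lambda\in K^+$, the sesquilinear form $H_\lambda(u,v):=\Tr_{K/K_1}(\lambda u\bar v)$ on $K$ is a positive definite Hermitian form, and multiplication by any $x\in K$ has adjoint multiplication by $\bar x$ with respect to $H_\lambda$ (this follows from $\overline{\Tr_{K/K_1}(y)}=\Tr_{K/K_1}(\bar y)$ together with $\bar\lambda=\lambda$). If $\lambda$ can be chosen so that $H_\lambda$ is equivalent over $K_1$ to the standard Hermitian form on $K_1^3$, then picking a $K_1$-basis of $K$ orthonormal for $H_\lambda$ converts the regular representation of $K$ on itself into the required embedding $\iota$, because in an orthonormal basis the adjoint operator corresponds to the conjugate transpose matrix.

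The main step is producing such a $\lambda$. Positive definite Hermitian forms over an imaginary quadratic field are classified up to equivalence by their rank together with their determinant modulo $N_{K_1/\Q}(K_1^\times)$. Taking $\lambda=\disc(K^+)$ viewed as a totally positive rational scalar in $K^+$, one computes $\det(H_\lambda)=\lambda^3\det(T)\equiv\disc(K^+)^4\pmod{(\Q^\times)^2}$, where $T$ is the Gram matrix of the trace form $H_1$ (whose determinant is the field discriminant of $K^+$ up to rational squares). Since $\disc(K^+)^4$ is a square in $\Q^\times$, it lies in $N_{K_1/\Q}(K_1^\times)$, so $H_\lambda$ is equivalent to the standard Hermitian form on $K_1^3$. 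The construction uses only the hypothesis $K_1\hookrightarrow\End^0(E)$, so it applies equally when $E$ is ordinary or supersingular, in any characteristic.
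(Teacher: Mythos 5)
Your proof is correct, but it takes a genuinely different route from the paper's. The paper makes the same initial reduction --- it suffices to produce an injective $\Q$-algebra homomorphism $K=K^+K_1\hookrightarrow M_3(\End^0(E))$ carrying complex conjugation to the Rosati involution --- but it then embeds $K_1$ diagonally as scalars and sends a primitive element $\alpha$ of the totally real cubic field $K^+$ to a \emph{rational symmetric} $3\times 3$ matrix with the same minimal polynomial, the existence of which is a theorem of Bender \cite{Bender} on characteristic polynomials of rational symmetric matrices (applicable since $\alpha$ is totally real). You instead realize the embedding as the regular representation of $K$ on itself as a $3$-dimensional $K_1$-vector space, written in an orthonormal basis for the positive definite Hermitian form $\Tr_{K/K_1}(\lambda u\bar v)$, and you dispose of the only obstruction --- whether this form is isometric to the standard one --- via the classification of Hermitian forms over an imaginary quadratic field (rank, signature, determinant modulo $\N_{K_1/\Q}(K_1^\times)$), choosing $\lambda=\disc(K^+)$ so that the determinant becomes a rational square and hence a norm. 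This is a reasonable trade: the paper outsources the delicate step to Bender's result, which lives in the finer world of quadratic forms over $\Q$, while you exploit the coarser Hermitian classification over $K_1$, making the determinant adjustment elementary; your construction is also closer in spirit to the Riemann-form picture recalled in Remark \ref{remark:CMconstruction}. Two implicit steps are worth making explicit but are not gaps: the determinant computation presumes the $K_1$-basis of $K$ is chosen inside $K^+$, so that the Gram matrix of $H_1$ is literally the trace form of $K^+/\Q$; and the identification of the Rosati involution on $M_3(K_1)\subset M_3(\End^0(E))$ with the conjugate transpose uses, besides Proposition \ref{prop: Rosati2}, the standard fact that the dual-isogeny involution restricts to complex conjugation on any imaginary quadratic subfield of $\End^0(E)$ (since $\phi+\phi^\vee$ and $\phi\phi^\vee$ are the rational trace and degree), together with the centrality of complex conjugation in $\Gal(F/\Q)$ to justify $\overline{\Tr_{K/K_1}(y)}=\Tr_{K/K_1}(\bar y)$.
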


\begin{proof}
It suffices to give an injective $\Q$-algebra homomorphism
\begin{equation}
K\hookrightarrow \End^0(E^3)=M_3(\End^0(E)).
\end{equation}
This can be achieved as follows. Write $K=K^+K_1$ where $K^+/\mathbb{Q}$ is a totally real field with $[K^+:\mathbb{Q}]=3$. Choose a primitive element
$\alpha$ of $K^+/\Q$, so $K^+=\Q(\alpha)$. Embed $K_1$ diagonally via the fixed embedding of
$K_1$ into $\End^0(E)$. Map $\alpha$ to a symmetric matrix $Q\in M_3(\mathbb{Q})$ which has the same minimal polynomial as $\alpha$. Since all the conjugates of $\alpha$ are real, the existence of the matrix $Q$ is proved in Theorem 4 of \cite{Bender}. Extend to a $\mathbb{Q}$-algebra homomorphism. 
\end{proof}

In Remark~\ref{remark:CMconstruction}, we reviewed the construction in characteristic~$0$ of genus 3 curves with CM by a sextic CM-field $K$.  Similarly, when  $K_1$ is an imaginary quadratic field, elliptic curves with CM by $K_1$ exist in characteristic zero. 
For example, we may
take $E=\C/{\mathcal O}_{K_1}$, where we consider the maximal
  order ${\mathcal O}_{K_1}$ of $K_1$ as lattice in $\C$ (\cite{Silverman2}, Remark
    II.4.1.1). Then $\End(E)=\calO_{K_1}$. Moreover, $j(E)$ is an algebraic
    integer (\cite{Silverman2}, Theorem II.6.1). (This can be deduced
    from Theorem \ref{thm:SerreTate} which states that $E$ has
    potentially good reduction.) In particular, $E$ can be defined
    over the number field $M:=\Q(j(E))$.

We now show the existence of elliptic curves with CM by $K_1$ in positive characteristic. As above, $E/M$ is an
elliptic curve defined over the
number field $M$ with $\End(E)={\mathcal O}_{K_1}$.
 We choose a rational prime $p$, and let $\mathfrak{p}$ be a prime of
 $M$ above $p$. After extending $M$ if necessary, we may assume that
 $E$ has good reduction at $\mathfrak{p}$. Write
 $\overline{E}_\mathfrak{p}$ for the reduction of $E$ at
 $\mathfrak{p}$. We obtain an embedding
\[
{\mathcal O}_{K_1}=\End(E)\hookrightarrow \End(\overline{E}_\mathfrak{p}).
\]
This proves the  following lemma.

\begin{lemma}\label{lem:degenerate}
Let $p$ be a prime. Then there exists an elliptic curve
$\overline{E}_p$ in characteristic $p$ with
${\mathcal O}_{K_1}\hookrightarrow \End(\overline{E}_p)$.
\end{lemma}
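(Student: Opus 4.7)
The plan is to realize $\overline{E}_p$ as the reduction modulo a prime above $p$ of an elliptic curve with CM by $\mathcal{O}_{K_1}$ defined in characteristic zero, exactly as indicated in the paragraph preceding the lemma. The key inputs are classical CM theory for elliptic curves, together with the Serre--Tate theorem (Theorem \ref{thm:SerreTate}) already used elsewhere in the paper.

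First, I would construct a canonical characteristic zero model. Take the complex elliptic curve $E_0 = \mathbb{C}/\mathcal{O}_{K_1}$, where $\mathcal{O}_{K_1}$ is viewed as a lattice in $\mathbb{C}$ via the fixed complex embedding of $K_1$. Standard CM theory for elliptic curves yields $\End(E_0) = \mathcal{O}_{K_1}$, and the $j$-invariant $j(E_0)$ is an algebraic integer (cited in the paper from Silverman). Consequently, $E_0$ descends to an elliptic curve $E$ defined over the number field $M := \mathbb{Q}(j(E_0))$, and the inclusion $\mathcal{O}_{K_1} \hookrightarrow \End(E_{\overline{M}})$ holds over the algebraic closure. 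After replacing $M$ by a finite extension (still called $M$) we may assume $\End(E) = \mathcal{O}_{K_1}$ is realized over $M$.

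Next, I would pass to characteristic $p$. Fix the prime $p$ in the statement and choose any prime $\mathfrak{p}$ of $M$ lying over $p$. By Theorem \ref{thm:SerreTate} (Serre--Tate), the CM elliptic curve $E$ has potentially good reduction at $\mathfrak{p}$, so after a further finite extension of $M$ we may assume $E$ has good reduction at (an extension of) $\mathfrak{p}$. Let $\overline{E}_{\mathfrak{p}}$ denote the special fiber of the N\'eron model at $\mathfrak{p}$; this is an elliptic curve over the residue field, whose algebraic closure has characteristic $p$. Defining $\overline{E}_p := \overline{E}_{\mathfrak{p}}$ (base-changed to an algebraic closure of the residue field) gives an elliptic curve in characteristic $p$.

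Finally, I would invoke the standard fact that reduction of endomorphisms is injective for abelian varieties with good reduction, which provides a ring embedding
\[
\mathcal{O}_{K_1} = \End(E) \hookrightarrow \End(\overline{E}_{\mathfrak{p}}) = \End(\overline{E}_p),
\]
yielding exactly the conclusion of the lemma. There is essentially no obstacle here: every step is a direct citation of theory already referenced in the excerpt (the Silverman references for the CM elliptic curve, Serre--Tate for potentially good reduction, and injectivity of reduction on endomorphisms). The only mild subtlety is ensuring that the full ring $\mathcal{O}_{K_1}$ (and not merely an order) embeds; this is guaranteed by starting with the curve $\mathbb{C}/\mathcal{O}_{K_1}$, whose endomorphism ring is the maximal order.
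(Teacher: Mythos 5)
Your proposal is correct and follows essentially the same route as the paper: take $E=\mathbb{C}/\mathcal{O}_{K_1}$ with $\End(E)=\mathcal{O}_{K_1}$, use the integrality of $j(E)$ to descend to the number field $\mathbb{Q}(j(E))$, pass to a finite extension where $E$ has good reduction at a prime above $p$ (via Serre--Tate), and reduce endomorphisms. The only difference is that you spell out a couple of points the paper leaves implicit (descent of the endomorphisms to a finite extension and injectivity of the reduction map), which is harmless.
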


The following result follows immediately from Lemma
\ref{lem:degenerate} and Proposition \ref{prop:embedding existence}.

\begin{cor}\label{cor:embedding}
Let $K$ be a sextic CM-field containing an imaginary quadratic field
$K_1$. Then there exists an order $\calO$ of $K$ for which there exists a solution to the embedding problem
for ${\mathcal O}$ and $p$ for every prime number $p$.
\end{cor}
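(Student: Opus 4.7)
The plan is to construct a single order $\mathcal{O}$ of $K$ whose definition is intrinsic (independent of any elliptic curve), and then show that Proposition \ref{prop:embedding existence} applied to the elliptic curves $E_p$ supplied by Lemma \ref{lem:degenerate} yields a solution to the embedding problem for this fixed $\mathcal{O}$ at every prime $p$. Throughout, write $K = K^+ K_1$ with $K^+$ totally real cubic and $K_1$ imaginary quadratic, so that $K^+$ and $K_1$ are linearly disjoint over $\mathbb{Q}$ and $K^+ \otimes_{\mathbb{Q}} K_1 \cong K$.

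The first step is to fix, once and for all, a primitive element $\alpha \in \mathcal{O}_{K^+}$ of $K^+/\mathbb{Q}$ and a symmetric \emph{integer} matrix $Q \in M_3(\mathbb{Z})$ whose characteristic polynomial equals the minimal polynomial of $\alpha$; such a $Q$ exists by Bender's theorem, which was already invoked in the proof of Proposition \ref{prop:embedding existence} (for degree $3$, Bender's result produces an integer matrix). Define
\[
\mathcal{O} := \mathbb{Z}[\alpha]\cdot \mathcal{O}_{K_1} \subset K,
\]
which is an order of $K$ since $K = K^+ K_1$ and $\alpha$ generates $K^+/\mathbb{Q}$. Both $\alpha$ and $Q$ are chosen independently of any elliptic curve or prime, so $\mathcal{O}$ is a fixed order of $K$.

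Next, for each prime $p$, let $E_p$ be the elliptic curve in characteristic $p$ supplied by Lemma \ref{lem:degenerate}, equipped with a chosen embedding $\iota_1 : \mathcal{O}_{K_1} \hookrightarrow \End(E_p)$. Define
\[
\iota : \mathcal{O} \longrightarrow M_3(\End E_p) = \End(E_p^3)
\]
as the $\mathbb{Z}$-algebra map determined by sending $\alpha$ to $Q$ (via the inclusion $M_3(\mathbb{Z}) \hookrightarrow M_3(\End E_p)$) and sending $\beta \in \mathcal{O}_{K_1}$ to the scalar matrix $\iota_1(\beta)\cdot I_3$. Because $Q$ has integer entries, it commutes with every scalar matrix in $M_3(\End E_p)$, so the two prescriptions are compatible and $\iota$ is a well-defined ring homomorphism on the ring generated by $\alpha$ and $\mathcal{O}_{K_1}$, i.e.\ on $\mathcal{O}$. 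Injectivity follows by extending scalars to $\mathbb{Q}$: the induced map $K \to M_3(\End^0 E_p)$ is a $\mathbb{Q}$-algebra homomorphism out of the field $K$, hence injective.

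Finally, by Proposition \ref{prop: Rosati2}, the Rosati involution attached to the product polarization on $E_p^3$ sends $(f_{ij}) \mapsto (f_{ji}^\vee)$. Applied to $\iota(\alpha) = Q$, this gives $Q^T = Q$ because $Q$ is symmetric with integer entries (integers being fixed by the dual isogeny), matching the fact that complex conjugation fixes $\alpha \in K^+ \subset \mathbb{R}$. Applied to $\iota(\beta) = \iota_1(\beta)\cdot I_3$, it yields $\iota_1(\beta)^\vee\cdot I_3$; since the dual isogeny on $\End E_p$ restricts to complex conjugation on every imaginary quadratic subfield of $\End^0(E_p)$ (the standard involution on an imaginary quadratic field, resp.\ on an imaginary quadratic subfield of $B_{p,\infty}$, is complex conjugation), this equals $\iota(\bar{\beta})$. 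Hence the Rosati involution induces complex conjugation on $\iota(\mathcal{O})$, so $\iota$ is a solution to the embedding problem for $\mathcal{O}$ and $p$. The only substantive point beyond the proof of Proposition \ref{prop:embedding existence} is the observation that Bender's matrix $Q$ can be chosen with integer entries; this is what lets the single order $\mathcal{O} = \mathbb{Z}[\alpha]\cdot \mathcal{O}_{K_1}$ work uniformly for every prime $p$, thereby establishing the quantifier order claimed in the corollary.
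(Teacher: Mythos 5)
Your construction is, at its core, the same as the paper's: the paper proves the corollary by combining Proposition \ref{prop:embedding existence} (embed $K_1$ diagonally via a fixed $K_1\hookrightarrow\End^0(E)$, send a primitive element $\alpha$ of $K^+$ to a symmetric matrix $Q$ with the same minimal polynomial, and check the Rosati condition exactly as you do via Proposition \ref{prop: Rosati2}) with Lemma \ref{lem:degenerate}, which supplies the curves $E_p$. So the skeleton of your argument is the paper's argument, written out in full.

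The one place where you go beyond the paper is the parenthetical claim that, in degree $3$, Bender's theorem produces a symmetric \emph{integer} matrix $Q$, and you correctly flag this as the load-bearing step for obtaining a single order $\mathcal{O}=\Z[\alpha]\cdot\mathcal{O}_{K_1}$ working uniformly in $p$. That claim is not justified and is not what Bender's Theorem 4 provides: Bender works over $\Q$, and the paper accordingly only asserts $Q\in M_3(\Q)$. Realizing a totally real cubic algebraic integer as an eigenvalue of a $3\times 3$ symmetric matrix over $\Z$ is a genuinely harder integral problem (such a matrix is equivalent to a unimodular, positive definite, multiplication-invariant trace form on an ideal, and the obstruction is roughly whether the different class is a square in the narrow class group), and it can fail. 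Fortunately you do not need integrality: fix $Q\in M_3(\Q)$ once and for all, let $N\in\Z_{>0}$ clear its denominators, and replace your order by $\Z[N\alpha]\cdot\mathcal{O}_{K_1}$; then $\iota(N\alpha)=NQ\in M_3(\Z)$ and every step of your verification goes through verbatim, still producing an order independent of $p$. (Note also that the paper's intended reading, per its introduction, is ``for any prime $p$ and some order $\mathcal{O}$,'' for which dependence of the order on $p$ is harmless and a rational $Q$ suffices directly.)
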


Corollary \ref{cor:embedding} does not specify whether the elliptic
curve $\overline{E}_p$ from Lemma \ref{lem:degenerate} is ordinary
or supersingular.  The following proposition answers this
question. Note that it follows that the set of primes where the
elliptic curve $\overline{E}_p$ is supersingular has Dirichlet density
$1/2$.

\begin{proposition}(Deuring's Theorem)\label{prop:Deuring}
Let $E/M$ be an elliptic curve with CM by ${\mathcal O}_{K_1}$. Let
$p$ be a rational prime and $\mathfrak{p}$ be a prime of $M$ above $p$
such that $E$ has good reduction at $\mathfrak{p}$. Then the reduction
$\overline{E}_\mathfrak{p}$ of $E$ at $\mathfrak{p}$ is supersingular
if and only if $p$ is inert or ramified in $K_1$.  
\end{proposition}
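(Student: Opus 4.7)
The plan is to reduce the statement to the classical criterion for embedding an imaginary quadratic field into the quaternion algebra $B_{p,\infty}$, using the dichotomy between ordinary and supersingular reduction.

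First, I would record that reduction at $\mathfrak{p}$ gives an injective ring homomorphism $\mathcal{O}_{K_1} = \End(E) \hookrightarrow \End(\overline{E}_\mathfrak{p})$. In particular, passing to endomorphism algebras yields an embedding of $\Q$-algebras
\[
K_1 \hookrightarrow \End^0(\overline{E}_\mathfrak{p}).
\]
Now $\End^0(\overline{E}_\mathfrak{p})$ is either an imaginary quadratic field (ordinary case) or, by Proposition \ref{prop:quaternion algebra}, isomorphic to $B_{p,\infty}$ (supersingular case). I would analyze each case.

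Suppose $\overline{E}_\mathfrak{p}$ is ordinary. Then $\End^0(\overline{E}_\mathfrak{p})$ is an imaginary quadratic field containing $K_1$, and for dimension reasons the inclusion is an equality. The Frobenius isogeny $\pi\in\End(\overline{E}_\mathfrak{p})$ lies in $\mathcal{O}_{K_1}$ and satisfies $\pi\bar{\pi} = p$, where $\bar{\pi}$ is the complex conjugate (equivalently, the dual). Ordinariness is equivalent to $\pi\notin p\,\mathcal{O}_{K_1}$, so neither $\pi$ nor $\bar{\pi}$ is divisible by $p$, and the ideals $(\pi)$ and $(\bar{\pi})$ are coprime prime ideals of norm $p$. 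Therefore $(p) = (\pi)(\bar{\pi})$ is the factorization of $p$ into two distinct prime ideals in $\mathcal{O}_{K_1}$, i.e., $p$ splits in $K_1$. This proves: if $\overline{E}_\mathfrak{p}$ is ordinary, then $p$ splits in $K_1$.

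Conversely, suppose $\overline{E}_\mathfrak{p}$ is supersingular. Then $K_1$ embeds as a $\Q$-subalgebra of $B_{p,\infty}$. By the classical theory of quaternion algebras (see, e.g., Théorème 3.8 in Chapter III of \cite{Vigneras}), an imaginary quadratic field $K_1$ embeds into $B_{p,\infty}$ if and only if no finite or infinite place of $\Q$ where $B_{p,\infty}$ ramifies is split in $K_1$. Since $K_1$ is imaginary, the infinite place is automatically non-split; hence the condition reduces to: $p$ does not split in $K_1$, i.e., $p$ is inert or ramified. Combining with the previous paragraph gives the equivalence.

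The only non-routine ingredient is the embedding criterion for quadratic fields into $B_{p,\infty}$, which I would quote from \cite{Vigneras} rather than reprove; the rest is bookkeeping with Frobenius. I do not expect any genuine obstacle, since both directions hinge on the trichotomy split/inert/ramified matching the ordinary/supersingular dichotomy via the structure of $\End^0(\overline{E}_\mathfrak{p})$.
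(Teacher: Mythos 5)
Your overall strategy is a legitimate and genuinely different route from the one sketched in the paper: the paper reduces to the criterion that $\overline{E}$ is supersingular if and only if $\pi^n=[p]^m$ for some integers $n,m$ (Section 22 of \cite{MAV}) and then cites Theorem 10 of Section 10.4 of \cite{LangEF}, whereas you split according to the ordinary/supersingular dichotomy of $\End^0(\overline{E}_\mathfrak{p})$. Your supersingular direction is correct: the local-global embedding criterion from \cite{Vigneras} shows that $K_1\hookrightarrow B_{p,\infty}$ forces $p$ to be non-split in $K_1$. The gap is in the ordinary case. A minor issue first: the residue field at $\mathfrak{p}$ is $\F_q$ with $q=p^f$, so the Frobenius satisfies $\pi\bar{\pi}=q=p^f$ rather than $p$, and $(\pi)$ need not be a prime ideal of norm $p$. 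More seriously, your assertion that ordinariness is equivalent to $\pi\notin p\,\mathcal{O}_{K_1}$ is false (a supersingular curve over $\F_p$ has $\pi=\sqrt{-p}\notin p\,\mathcal{O}_{K_1}$), and the inference you draw from it --- that $\pi,\bar{\pi}\notin p\,\mathcal{O}_{K_1}$ implies the ideals $(\pi)$ and $(\bar{\pi})$ are coprime --- is invalid. If $p$ ramifies, say $(p)=\mathfrak{P}^2$, an element with $(\pi)=\mathfrak{P}$ is not divisible by $p$, yet $(\pi)=(\bar{\pi})$ are as far from coprime as possible. Non-divisibility by $p$ simply does not yield coprimality of an ideal with its conjugate, so as written the step ``therefore $(p)=(\pi)(\bar{\pi})$ is a product of two distinct primes'' is unsupported.

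The correct input is that ordinariness is equivalent to $\tr(\pi)=\pi+\bar{\pi}$ being prime to $p$. With that, the ordinary case closes in one line: if $p$ were inert or ramified, the unique prime $\mathfrak{P}$ of $\mathcal{O}_{K_1}$ above $p$ satisfies $\bar{\mathfrak{P}}=\mathfrak{P}$; since $\pi\bar{\pi}=q\in\mathfrak{P}$, one of $\pi,\bar{\pi}$ lies in $\mathfrak{P}$, hence by conjugation-stability both do, hence $\tr(\pi)\in\mathfrak{P}\cap\Z=p\Z$, contradicting ordinariness. Therefore $p$ splits. With this substitution your argument is complete, and it is arguably more self-contained than the paper's appeal to \cite{LangEF}.
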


Proposition \ref{prop:Deuring} is well known, but hard to find explicitly
in the literature. The statement can be proved using Theorem 10 of
Section 10.4 of \cite{LangEF}. We give the idea of the proof of the
proposition.  Let $\overline{E}/\F_q$ be an elliptic curve. Write
$\pi$ for its $q$-Frobenius endomorphism. Then $\overline{E}$ is
supersingular if and only if there exists integers $n,m$ such that
$\pi^n=[p]^m$, where $[p]$ denotes multiplication by $p$. (See for
example the proof of the Theorem of Deuring in Section 22 of
\cite{MAV}). The theorem from \cite{LangEF} shows that this happens if
and only if $p$ is inert or ramified in $K_1$.

\appendix

\section{Equations} 
\label{sec:equations}

 In this section, we list the equations obtained from a possible solution to the embedding problem. We start by setting some notation.

Let $K^{+}$ be the maximal real subfield of the sextic CM-field $K=K^{+}(\eta)$. 
Take an integral basis of $\calO_{K^+}$, so $\calO_{K^+}=\alpha_1\Z \oplus \alpha_2 \Z \oplus \alpha_3 \Z$.
We may assume that $K^+=\mathbb{Q}(\alpha_1)$.
We fix the
following notation:
\begin{itemize}
\item $\Tr_{K/K^+}(\eta)=a_1\alpha_1+a_2\alpha_2+a_3\alpha_3$
\item $\N_{K/K^+}(\eta)=b_1\alpha_1+b_2\alpha_2+b_3\alpha_3$
\item $f_i(x)=x^3+m_ix^2+n_ix+s_i$ is the characteristic polynomial of $\alpha_i$ over $\mathbb{Q}$ for $i=1,2,3.$
\end{itemize}

A solution to the embedding problem (Problem \ref{problem:embedding}) gives us three elliptic curves $E_1,E_2,E_3$ and an embedding of $\iota: \calO_K \hookrightarrow \End(E_1\times E_2\times E_3)$ such that Rosati involution on $E_1 \times E_2 \times E_3$ restricts to complex conjugation in the image of $\calO_K$. This gives the following conditions on $\iota(\alpha_i)$ and $\iota(\eta)$:

\begin{enumerate}
\item Commutativity: \begin{enumerate}
\item $\iota(\alpha_i) \iota(\eta)=\iota(\eta) \iota(\alpha_i)$ for all $i=1,2,3$.
\item $\iota(\alpha_i)\iota(\alpha_j)=\iota(\alpha_j)\iota(\alpha_i)$ for all $i \neq j \in \{1,2,3 \}$.
\end{enumerate}
\item Characteristic polynomial: $f_i(\iota(\alpha_i))=0$ for all $i=1,2,3$.
\item Norm: $\iota(\eta)\iota(\eta)^\dagger=b_1\iota(\alpha_1)+b_2\iota(\alpha_2)+b_3\iota(\alpha_3)$, where $\dagger$ denotes the conjugate transpose.
\item Trace: $\iota(\eta)+\iota(\eta)^\dagger=a_1\iota(\alpha_1)+a_2\iota(\alpha_2)+a_3\iota(\alpha_3)$.
\item Duality/Complex conjugation: $\iota(\alpha_i)=\iota(\alpha_i)^{\dagger}$ for all $i=1,2,3$. Since we are interested in the case that Rosati involution induces complex multiplication and since $\eta$ can be chosen so that $\eta^2 \in K^+$ is totally negative, we have $\iota(\eta)^{\dagger}=-\iota(\eta)$.
\end{enumerate}

  In the rest of this appendix, we will only write the conditions for
   $i=1$ which is enough if we have a power basis. In any case, the
   other relations for $i=2,3$ are similar. We now write the conditions above in terms of
matrix coefficients.  We are using the conventions and maps introduced in
Section \ref{sec:endo}.

Let $M=\iota(\alpha_1)$ be the matrix  $\begin{pmatrix}\
 a & b & c  \\
 d & e & f  \\
 g & h & \ell
\end{pmatrix}$ and $N=\iota(\eta)$ be the matrix $\begin{pmatrix}\
 p & q & r  \\
 s & t & u  \\
 v & w & y
\end{pmatrix} .$

 \subsubsection{Equations for duality/complex conjugation condition}\label{sec:duality}
  The relation $\iota(\eta)^{\dagger}=-\iota(\eta)$ translates into $M=M^{\vee}$ i.e.,  $\begin{pmatrix}\
   a & b & c  \\
   d & e & f  \\
   g & h & \ell
  \end{pmatrix} =\begin{pmatrix}\
   a ^{\vee}& d^{\vee} & g^{\vee}  \\
   b^{\vee} & e^{\vee} & h^{\vee}  \\
   c^{\vee} & f^{\vee} & \ell^{\vee}
  \end{pmatrix} . $ 

This gives us the following relations.

  \begin{remark} Note that we name the relations with respect to the variables we intend to use later on. Our aim is to simplify the equations and write everything in terms of the upper triangular entries of our matrices which are $a,b,c,e,f,\ell$ in the case of $M$ and $p,q,r,t,u,y$ in the case of $N$.
  \end{remark}

  \begin{enumerate}
  \item[(b-d)] $d=b^{\vee}$
  \item[(c-g)] $g=c^{\vee}$
  \item[(f-h)] $h=f^{\vee}$
  \item[(int)] $a,e,\ell$ are integral and in $\Q$, hence they are integers.

  \end{enumerate}

  The relation  $\iota(\eta)^{\vee}=-\iota(\eta)$  translates into: $\begin{pmatrix}\
   p & q & r \\
   s & t & u \\
   v & w & y
  \end{pmatrix} =\begin{pmatrix}\
     -p^{\vee} & -s^{\vee} & -v^{\vee} \\
     -q^{\vee} & -t^{\vee} & -w^{\vee}  \\
     -r^{\vee} & -u^{\vee} & -y^{\vee}
    \end{pmatrix} .$

  This gives us the following relations:

    \begin{enumerate}
    \item[(q-s)] $s=-q^{\vee}$
    \item[(r-v)]$v=-r^{\vee}$
    \item[(u-w)]$w=-u^{\vee}$
    \item[(trace)]$p=-p^{\vee}$, $t=-t^{\vee}$,  and $y=-y^{\vee}$\\
     i.e., $p$, $t$, and $y$ have trace zero in $\End(E_1)$, $\End(E_2)$, and $\End(E_3)$ respectively.
    \end{enumerate}

\subsubsection{Equations for commutativity condition} \label{sec:comm}
 Using $M$ and $N$ as above, the condition means $MN=NM$ which translates into the following equations:
 \begin{enumerate}
 \item[(i-i)] $ap + bs + cv = pa + qd + rg$.   (By equation (int) in Section \ref{sec:duality}, $a$ is an integer.  Hence $ap=pa$ and $bs + cv =  qd + rg$.)
 \item[(i-ii)] $aq + bt + cw= pb + qe + rh$
 \item[(i-iii)] $ar + bu + cy= pc + qf + r \ell $
 \item[(ii-i)] $dp + es + fv= sa + td + ug$
 \item[(ii-ii)] $dq + et + fw = sb + te + uh$ (By equation (int) in Section \ref{sec:duality}, $e$ is an integer.  Hence $et=te$ and $dq + fw = sb +  uh$.)
 \item[(ii-iii)] $dr + eu + fy= sc + tf + u\ell $
 \item[(iii-i)] $gp + hs + \ell v= va + wd + yg$
 \item[(iii-ii)] $gq + ht + \ell w = vb + we + yh$
 \item[(iii-iii)] $gr + hu + \ell y= vc + wf + y\ell $ (By equation (int) in Section \ref{sec:duality}, $\ell$ is an integer.  Hence $\ell y=y \ell$ and  $gr + hu = vc + wf$.)
 \end{enumerate}

 \subsubsection{Combining duality and commutativity conditions}

 Now we will plug in the equations we obtained in Section \ref{sec:duality} into the equations we obtained in Section \ref{sec:comm}. Note that our aim is to simplify the equations and write everything in terms of the upper triangular entries of our matrices which are $a,b,c,e,f,\ell$ in the case of $M$ and $p,q,r,t,u,y$ in the case of $N$.

 \vspace{1cm} \begin{tabular}{|c||c|} \hline Relation & Obtained
 using: \\ \hline\hline $bq^{\vee}+cr^{\vee}+rc^{\vee}+qb^{\vee}=0 $ &
 (i-i), (c-g), (q-s), (v-r) \\ \hline
 $pb+qe+rf^{\vee}-aq-bt+cu^{\vee}=0$ & (i-ii), (u-w), (f-h) \\ \hline
 $ar + bu + cy- pc - qf - r \ell=0 $& (i-iii) \\ \hline $b^{\vee}p -
 eq^{\vee} - fr^{\vee}+q^{\vee}a - tb^{\vee} - uc^{\vee}=0 $& (ii-i),
 (b-d), (q-s), (r-v), (q-s) \\ \hline $b^{\vee}q - fu^{\vee}
 +q^{\vee}b - uf^{\vee}=0$ & (ii-ii), (b-d), (u-w), (q-s),
 (f-h) \\ \hline $dr + eu + fy+q^{\vee}c - tf - u\ell=0 $ & (ii-iii),
 (q-s) \\ \hline $c^{\vee}p - f^{\vee}q^{\vee} + (a-\ell)r^{\vee} +
 u^\vee b - yc^\vee=0$ & (iii-i), (c-g), (f-h), (s-q), (r-v), (u-w),
 (b-d), (int) \\ \hline $c^{\vee}q + f^{\vee}t + (e- \ell)u^{\vee}
 +r^{\vee}b - yf^{\vee}=0$ & (iii-i), (c-g), (f-h), (u-w), (r-v),
 (int)\\ \hline $c^{\vee}r + f^{\vee}u + r^{\vee}c + u^{\vee}f=0$ &
 (iii-i), (f-h), (u-w), (r-v) \\ \hline \end{tabular}

 \subsubsection{Equations for characteristic polynomial
condition}\label{sec:char} The characteristic polynomial condition for
$i=1$ translates into $0=M^3+m_1M^2+n_1M+s_1$.  Combining this
equality with Equation (int) of Section \ref{sec:comm} gives the
following equations. For instance, for the top left corner of the
matrix sum we get
\[
0=a^3+abd+acg+bda+bed+bfg+cga+chd+c\ell g +
m_1(a^2+bd+cg)+n_1a+s_1 .
\]
 If we apply Condition (int) this turns into
\[
 (2a+e+m_1) bd+(2a+\ell + m_1)cg+bfg+chd+a^3+m_1a^2+n_1a+s_1=0.
 \]
  The
following is the list of equations coming from all nine entries.

\begin{enumerate}[label=(\roman*)]

\item $ (2a+e+m_1) bd+(2a+\ell + m_1)cg+bfg+chd+a^3+m_1a^2+n_1a+s_1=0$
 \item $ (a^2+ae+e^2+m_1a+m_1e+n_1)b+(e+\ell+m_1+a)ch+bdb+bfh+cgb=0$
  \item $ (a^2+a\ell+\ell^2+m_1a+m_1\ell+n_1)c+(a+e+\ell+m_1)bf+bdc+cgc+chf=0$
\item $(a^2+ea+e^2+m_1a+m_1e+n_1)d+(e+a+\ell + m_1)fg+dbd+dcg+fhd=0 $
\item $ (a+2e+m_1) db+(2e+\ell+m_1)fh+dch+fgb+e^3+m_1e^2+n_1+s_1=0$
\item$ (a+\ell+e+m_1)dc+(e^2+e\ell+\ell^2+m_1e+m_1\ell+n_1)f+dbf+fgc+fhf=0$
\item$ (a^2+\ell a+\ell^2+m_1a +m_1\ell +n_1)g+(a+e+\ell+m_1)hd+gbd+gcg+hfg=0$

\item$ (e^2+\ell e+\ell^2+m_1e+m_1\ell+n_1)h+(a+e+\ell+m_1)gb+gch+hdb+hfh=0$

\item$ (a+2\ell+m_1)gc+(e+2\ell+m_1)hf+gbf+hdc+\ell^3+m_1\ell^2+n_1\ell+s_1=0$
\end{enumerate}

  \subsubsection{Combining duality and characteristic polynomial conditions}

   Now we will plug in the equations we obtained in Section \ref{sec:duality} into the equations we obtained in Section \ref{sec:char}. Note that our aim is to simplify the equations and write everything in terms of the upper triangular entries of our matrices which are $a,b,c,e,f,\ell$ in the case of $M$ and $p,q,r,t,u,y$ in the case of $N$. Note that $\Nrd(x)=xx^{\vee}, \Tr(x)=x+x^{\vee}$ denote the reduced norm and trace of an element. Since the norm and trace are scalars, they commute with everything else.

   We start with the relations coming from $M$:

 \begin{enumerate}[label=(\Roman{*}), ref=(\Roman{*})]
 \item \label{cpm11} 
 $(2a+e+m_1) \Nrd(b) + (2a+\ell + m_1) \Nrd(c) + \Tr(bfc^{\vee})+a^3+m_1a^2+n_1a+s_1=0$

 \item \label{cpm12} 
  $ (a^2+ae+e^2+m_1a+m_1e+n_1+\Nrd(b)+\Nrd(c)+\Nrd(f))b+(a+e+\ell+m_1)cf^{\vee}=0 $

 \item \label{cpm13}
 $(a^2+a\ell+\ell^2+m_1a+m_1\ell+n_1+\Nrd(b)+\Nrd(c)+\Nrd(f))c+(a+e+\ell+m_1)bf=0 $

 \item \label{cpm21} 
 $(a^2+ae+e^2+m_1a+m_1e+n_1+\Nrd(b)+\Nrd(c)+\Nrd(f))b^{\vee}+(a+e+\ell + m_1)fc^{\vee}=0$

 \item \label{cpm22}  
 $(a+2e+m_1) \Nrd(b)+(2e+\ell+m_1)\Nrd(f)+\Tr(b^{\vee}cf^{\vee})+e^3+m_1e^2+n_1e+s_1=0$

 \item \label{cpm23} 
 $(e^2+e\ell+\ell^2+m_1e+m_1\ell+n_1+\Nrd(b)+\Nrd(c)+\Nrd(f))f+ (a+\ell+e+m_1)b^{\vee}c=0
 $

 \item \label{cpm31} 
 $ (a^2+a\ell +\ell^2+m_1a +m_1\ell +n_1+\Nrd(b)+\Nrd(c)+\Nrd(f))c^{\vee}+(a+e+\ell+m_1)f^{\vee}b^{\vee}=0
 $

 \item \label{cpm32} 
 $(e^2+e\ell+\ell^2+m_1e+m_1\ell+n_1+\Nrd(b)+\Nrd(c)+\Nrd(f))f^{\vee}+(a+e+\ell+m_1)c^{\vee}b=0$

 \item \label{cpm33}  
 $(a+2\ell+m_1)\Nrd(c)+(e+2\ell+m_1)\Nrd(f)+\Tr(c^{\vee}bf)+\ell^3+m_1\ell^2+n_1\ell+s_1=0$

  \end{enumerate}

Write $\Tr(X)$ for the sum of the entries on the main diagonal of a matrix $X$. Notice that if we take $\eta=\sqrt{\alpha_1}$ like in Section \ref{62}, then
$$
-m_1=\Tr(\alpha_1)=\Tr(N^2)=\Tr(M)=a+e+\ell,
$$ where the first equality follows by definition, the second equality is
Lemma \ref{lem: Tr(alpha)}, the third equality holds because we took $\eta=\sqrt{\alpha_1}$, and the final equality is the definition of $\Tr(M)$.
This implies that
Equation \ref{cpm12} = Equation \ref{cpm21}, Equation \ref{cpm13}=
Equation \ref{cpm31} and
Equation \ref{cpm23}=Equation \ref{cpm32}.

Combining $-m_1=a+e+\ell$ with relations \ref{cpm11}-\ref{cpm33}, we deduce the following relations on the coefficients $m_1,n_1,s_1$ of the characteristic polynomial of $\alpha_1.$
\begin{enumerate}
\item\label{m1}
$m_1=-(a+e+\ell)$
\item
$n_1=ae+e\ell+a\ell-\Nrd(b)-\Nrd(c)-\Nrd(f)$ (using Equation (\ref{m1}) together with Equations \ref{cpm12}, \ref{cpm13} and \ref{cpm23}.)
\item
$s_1=a\Nrd(f)+e\Nrd(c)+l\Nrd(b)-ae\ell-\Tr(bfc^{\vee})$ (using Equation (\ref{m1}) together with Equations \ref{cpm11}, \ref{cpm22} and \ref{cpm33}.)
\end{enumerate}

\bibliographystyle{plain}
\begin{bibdiv}
\begin{biblist}

\bib{Bender}{article}{
      author={Bender, E.},
       title={Characteristic polynomials of symmetric matrices},
        date={1968},
        ISSN={0030-8730},
     journal={Pacific J. Math.},
      volume={25},
       pages={433\ndash 441},
      review={\MR{0229619 (37 \#5193)}},
}

\bib{BLR}{book}{
      author={Bosch, S.},
      author={L{\"u}tkebohmert, W.},
      author={Raynaud, M.},
       title={N\'eron models},
      series={Ergebnisse der Mathematik und ihrer Grenzgebiete (3) [Results in
  Mathematics and Related Areas (3)]},
   publisher={Springer-Verlag, Berlin},
        date={1990},
      volume={21},
        ISBN={3-540-50587-3},
  url={http://dx.doi.org.ezproxy.lib.utexas.edu/10.1007/978-3-642-51438-8},
      review={\MR{1045822 (91i:14034)}},
}

\bib{Bouwthesis}{article}{
      author={Bouw, I.},
       title={The {$p$}-rank of ramified covers of curves},
        date={2001},
        ISSN={0010-437X},
     journal={Compositio Math.},
      volume={126},
      number={3},
       pages={295\ndash 322},
  url={http://dx.doi.org.ezproxy.lib.utexas.edu/10.1023/A:1017513122376},
      review={\MR{1834740 (2002e:14045)}},
}

\bib{Arizona}{unpublished}{
      author={Bouw, I.},
      author={Wewers, S.},
       title={Group actions on curves and the lifting problem},
        date={2012},
  note={http://math.arizona.edu/~swc/aws/2012/2012BouwWewersNotesPreliminary.pdf},
}

\bib{ChaiConradOort}{book}{
      author={Chai, C-L.},
      author={Conrad, B.},
      author={Oort, F.},
       title={Complex multiplication and lifting problems},
      series={Mathematical Surveys and Monographs},
   publisher={American Mathematical Society, Providence, RI},
        date={2014},
      volume={195},
        ISBN={978-1-4704-1014-8},
      review={\MR{3137398}},
}

\bib{Crew}{article}{
      author={Crew, R.},
       title={Etale {$p$}-covers in characteristic {$p$}},
        date={1984},
        ISSN={0010-437X},
     journal={Compositio Math.},
      volume={52},
      number={1},
       pages={31\ndash 45},
         url={http://www.numdam.org/item?id=CM_1984__52_1_31_0},
      review={\MR{742696 (85f:14011)}},
}

\bib{DeligneMumford}{article}{
      author={Deligne, P.},
      author={Mumford, D.},
       title={The irreducibility of the space of curves of given genus},
        date={1969},
        ISSN={0073-8301},
     journal={Inst. Hautes \'Etudes Sci. Publ. Math.},
      number={36},
       pages={75\ndash 109},
      review={\MR{0262240 (41 \#6850)}},
}

\bib{Dodson}{article}{
      author={Dodson, B.},
       title={The structure of {G}alois groups of {${\rm CM}$}-fields},
        date={1984},
        ISSN={0002-9947},
     journal={Trans. Amer. Math. Soc.},
      volume={283},
      number={1},
       pages={1\ndash 32},
         url={http://dx.doi.org/10.2307/1999987},
      review={\MR{735406 (86i:11063)}},
}

\bib{GorenLauter06}{article}{
      author={Goren, E.},
      author={Lauter, K.},
       title={Evil primes and superspecial moduli},
        date={2006},
        ISSN={1073-7928},
     journal={Int. Math. Res. Not.},
       pages={Art. ID 53864, 19},
         url={http://dx.doi.org/10.1155/IMRN/2006/53864},
      review={\MR{2250004 (2007f:11061)}},
}

\bib{GorenLauter07}{article}{
      author={Goren, E.},
      author={Lauter, K.},
       title={Class invariants for quartic {CM} fields},
        date={2007},
        ISSN={0373-0956},
     journal={Ann. Inst. Fourier (Grenoble)},
      volume={57},
      number={2},
       pages={457\ndash 480},
         url={http://aif.cedram.org/item?id=AIF_2007__57_2_457_0},
      review={\MR{2310947 (2008i:11075)}},
}

\bib{GorenLauter13}{article}{
      author={Goren, E.},
      author={Lauter, K.},
       title={A {G}ross-{Z}agier formula for quaternion algebras over totally
  real fields},
        date={2013},
        ISSN={1937-0652},
     journal={Algebra Number Theory},
      volume={7},
      number={6},
       pages={1405\ndash 1450},
         url={http://dx.doi.org/10.2140/ant.2013.7.1405},
      review={\MR{3107568}},
}

\bib{GeerMoonen}{unpublished}{
      author={Geer, G. van~der},
      author={Moonen, B.},
       title={Abelian varieties},
        note={Preliminary version,
  http://www.math.ru.nl/~bmoonen/research.html},
}

\bib{GrossZagier}{article}{
      author={Gross, B.},
      author={Zagier, D.},
       title={On singular moduli},
        date={1985},
        ISSN={0075-4102},
     journal={J. Reine Angew. Math.},
      volume={355},
       pages={191\ndash 220},
      review={\MR{772491 (86j:11041)}},
}

\bib{KluenersMalle}{unpublished}{
      author={Kl\"uners, J.},
      author={Malle, G.},
       title={A database for number fields},
        note={http://galoisdb.math.upb.de/},
}

\bib{KaniRosen}{article}{
      author={Kani, E.},
      author={Rosen, M.},
       title={Idempotent relations and factors of {J}acobians},
        date={1989},
        ISSN={0025-5831},
     journal={Math. Ann.},
      volume={284},
      number={2},
       pages={307\ndash 327},
         url={http://dx.doi.org/10.1007/BF01442878},
      review={\MR{1000113 (90h:14057)}},
}

\bib{KoikeWeng}{article}{
      author={Koike, K.},
      author={Weng, A.},
       title={Construction of {CM} {P}icard curves},
        date={2005},
        ISSN={0025-5718},
     journal={Math. Comp.},
      volume={74},
      number={249},
       pages={499\ndash 518 (electronic)},
         url={http://dx.doi.org/10.1090/S0025-5718-04-01656-4},
      review={\MR{2085904 (2005g:11103)}},
}

\bib{Lang}{book}{
      author={Lang, S.},
       title={Complex multiplication},
      series={Grundlehren der Mathematischen Wissenschaften [Fundamental
  Principles of Mathematical Sciences]},
   publisher={Springer-Verlag},
     address={New York},
        date={1983},
      volume={255},
        ISBN={0-387-90786-6},
         url={http://dx.doi.org/10.1007/978-1-4612-5485-0},
      review={\MR{713612 (85f:11042)}},
}

\bib{LangEF}{book}{
      author={Lang, S.},
       title={Elliptic functions},
     edition={second edition},
      series={Graduate Texts in Mathematics},
   publisher={Springer-Verlag, New York},
        date={1987},
      volume={112},
        ISBN={0-387-96508-4},
  url={http://dx.doi.org.ezproxy.lib.utexas.edu/10.1007/978-1-4612-4752-4},
        note={With an appendix by J. Tate},
      review={\MR{890960 (88c:11028)}},
}

\bib{LauterViray}{unpublished}{
      author={Lauter, K.},
      author={Viray, B.},
       title={An arithmetic intersection theory formula for denominators of
  {I}gusa class polynomials},
        date={2012},
        note={http://arxiv.org/abs/1210.7841},
}

\bib{Milne}{unpublished}{
      author={Milne, J.~S.},
       title={Complex multiplication},
        date={2006},
        note={http://www.jmilne.org/math/CourseNotes/cm.htm},
}

\bib{MilneAV}{unpublished}{
      author={Milne, J.~S.},
       title={Abelian varieties},
        date={2008},
        note={http://www.jmilne.org/math/CourseNotes/av.html},
}

\bib{MAV}{book}{
      author={Mumford, D.},
       title={Abelian varieties},
      series={Tata Institute of Fundamental Research Studies in Mathematics,
  No. 5},
   publisher={Published for the Tata Institute of Fundamental Research, Bombay;
  Oxford University Press, London},
        date={1970},
      review={\MR{0282985 (44 \#219)}},
}

\bib{ObusPries}{article}{
      author={Obus, A.},
      author={Pries, R.},
       title={Wild tame-by-cyclic extensions},
        date={2010},
        ISSN={0022-4049},
     journal={J. Pure Appl. Algebra},
      volume={214},
      number={5},
       pages={565\ndash 573},
         url={http://dx.doi.org/10.1016/j.jpaa.2009.06.017},
      review={\MR{2577662 (2011h:12011)}},
}

\bib{OortUeno}{article}{
      author={Oort, F.},
      author={Ueno, K.},
       title={Principally polarized abelian varieties of dimension two or three
  are {J}acobian varieties},
        date={1973},
        ISSN={0040-8980},
     journal={J. Fac. Sci. Univ. Tokyo Sect. IA Math.},
      volume={20},
       pages={377\ndash 381},
      review={\MR{0364265 (51 \#520)}},
}

\bib{Paulhus}{article}{
      author={Paulhus, J.},
       title={Decomposing {J}acobians of curves with extra automorphisms},
        date={2008},
        ISSN={0065-1036},
     journal={Acta Arith.},
      volume={132},
      number={3},
       pages={231\ndash 244},
         url={http://dx.doi.org/10.4064/aa132-3-3},
      review={\MR{2403651 (2009c:14049)}},
}

\bib{Shimura}{book}{
      author={Shimura, G.},
       title={Abelian varieties with complex multiplication and modular
  functions},
      series={Princeton Mathematical Series},
   publisher={Princeton University Press, Princeton, NJ},
        date={1998},
      volume={46},
        ISBN={0-691-01656-9},
      review={\MR{1492449 (99e:11076)}},
}

\bib{Silverman2}{book}{
      author={Silverman, J.},
       title={Advanced topics in the arithmetic of elliptic curves},
      series={Graduate Texts in Mathematics},
   publisher={Springer-Verlag, New York},
        date={1994},
      volume={151},
        ISBN={0-387-94328-5},
  url={http://dx.doi.org.ezproxy.lib.utexas.edu/10.1007/978-1-4612-0851-8},
      review={\MR{1312368 (96b:11074)}},
}

\bib{SerreTate}{article}{
      author={Serre, J-P.},
      author={Tate, J.},
       title={Good reduction of abelian varieties},
        date={1968},
        ISSN={0003-486X},
     journal={Ann. of Math. (2)},
      volume={88},
       pages={492\ndash 517},
      review={\MR{0236190 (38 \#4488)}},
}

\bib{Vigneras}{book}{
      author={Vign{\'e}ras, M-F.},
       title={Arithm\'etique des alg\`ebres de quaternions},
      series={Lecture Notes in Mathematics},
   publisher={Springer, Berlin},
        date={1980},
      volume={800},
        ISBN={3-540-09983-2},
      review={\MR{580949 (82i:12016)}},
}

\bib{Yui}{article}{
      author={Yui, N.},
       title={On the {J}acobian variety of the {F}ermat curve},
        date={1980},
        ISSN={0021-8693},
     journal={J. Algebra},
      volume={65},
      number={1},
       pages={1\ndash 35},
         url={http://dx.doi.org/10.1016/0021-8693(80)90236-7},
      review={\MR{578793 (82m:14016)}},
}

\end{biblist}
\end{bibdiv}

\end{document}